
\documentclass[10pt,portrait,reqno,11pt]{amsart}%
\usepackage{amssymb,amsthm,amsmath, amsfonts, float}
\usepackage{graphicx}
\usepackage{multirow}
\usepackage{subfig}
\usepackage{amsmath}
\usepackage{amsfonts}
\usepackage{lscape}
\usepackage{amssymb}%
\setcounter{MaxMatrixCols}{30}
\providecommand{\U}[1]{\protect\rule{.1in}{.1in}}
\topmargin -1cm \textheight 23.7cm \evensidemargin 0cm
\oddsidemargin 0cm \textwidth 16.8cm \evensidemargin 0cm
\oddsidemargin 0cm
\newtheorem{theorem}{Theorem}
\theoremstyle{plain}

\newtheorem{example}{Example}

\newtheorem{proposition}{Proposition}

\numberwithin{equation}{section}
\begin{document}
\title[Canal Hypersurfaces Gen. by (Pseudo-Partially) Null Curves in $E_{1}^{4}$]{Canal Hypersurfaces Generated by Pseudo Null, Partially Null and Null Curves
in Lorentz-Minkowski 4-Space}
\subjclass[2010]{14J70, 53A07, 53A10.}
\keywords{Pseudo null curve, Partially null curve, Null curve, Canal hypersurface,
Tubular hypersurface.}
\author[M. Alt\i n, A. Kazan and D.W. Yoon]{\bfseries Mustafa Alt\i n$^{1\ast}$, Ahmet Kazan$^{2}$ and Dae Won Yoon$^{3}$}
\address{ \\
$^{1}$Technical Sciences Vocational School, Bing\"{o}l University, Bing\"{o}l,
Turkey \\
 \\
$^{2}$Department of Computer Technologies, Do\u{g}an\c{s}ehir Vahap
K\"{u}\c{c}\"{u}k Vocational School, Malatya Turgut \"{O}zal University,
Malatya, Turkey \\
 \\
$^{3}$Department of Mathematics Education and RINS, Gyeongsang National
University, Jinju 52828, Republic of Korea \\
 \\
$^{\ast}$Corresponding author: maltin@bingol.edu.tr}

\begin{abstract}
In this paper, we obtain the parametric expressions of the canal hypersurfaces
that are formed as the envelope of a family of pseudo hyperspheres or pseudo
hyperbolic hyperspheres whose centers lie on a pseudo null, partially null or
null curves in $E_{1}^{4}$ and give their some geometric invariants such as
unit normal vector fields, Gaussian curvatures and mean curvatures. Also, we
construct some examples for these canal hypersurfaces and finally, we give
some characterizations for tubular hypersurfaces in $E_{1}^{4}$.

\end{abstract}
\maketitle


\section{\textbf{GENERAL INFORMATION AND\ BASIC\ CONCEPTS}}

The class of surfaces formed by sweeping a sphere is called canal surfaces and
they have been investigated by Monge in 1850. So, one can see a canal surface
as the envelope of a moving sphere with varying radius, defined by the
trajectory $\gamma(s)$ of its centers and a radius function $r(s)$. The canal
surface is called a tubular surface or pipe surface if the radius function is
constant. After defining canal and tubular surfaces, many mathematicians have
studied their different geometric characterizations and these surfaces have
been applied to many fields, such as the solid and the surface modeling for
CAD/CAM, construction of blending surfaces, shape re-construction. Also, they
are useful to represent various objects such as pipe, hose, rope or intestine
of a body \cite{Karacany}, \cite{Ucum}. For some results about different
geometric characterizations of canal and tubular (hyper)surfaces in different
three, four or higher dimensional spaces, we refer to \cite{Mahmut},
\cite{Aslan}, \cite{Yusuf}, \cite{Fu}, \cite{Garcia}, \cite{Hartman},
\cite{Izumiya}, \cite{Karacan}, \cite{Karacan2}, \cite{Karacan3},
\cite{Sezai}, \cite{Kim}, \cite{Krivos}, \cite{Maekawa}, \cite{Peter},
\cite{Ro}, \cite{Ucum}, \cite{Xu}, \cite{Kucuk}, and etc.

On the other hand, the extrinsic differential geometry of submanifolds in
Lorentz-Minkowski 4-space $E_{1}^{4}$ is of special interest in Relativity
Theory. So, many studies about curves and (hyper)surfaces have been done in
this space and this motivated us to construct the canal hypersurfaces using
pseudo null, partially null and null curves in $E_{1}^{4}$. Here, we'll recall
some basic concepts for curves and hypersurfaces in $E_{1}^{4}$.

Let $\overrightarrow{u}=(u_{1},u_{2},u_{3},u_{4})$, $\overrightarrow{v}%
=(v_{1},v_{2},v_{3},v_{4})$ and $\overrightarrow{w}=(w_{1},w_{2},w_{3},w_{4})$
be three vectors in $E_{1}^{4}$. Then the inner product of two vectors and
vector product of three vectors in $E_{1}^{4}$ are given by%
\begin{equation}
\left\langle \overrightarrow{u},\overrightarrow{v}\right\rangle =-u_{1}%
v_{1}+u_{2}v_{2}+u_{3}v_{3}+u_{4}v_{4} \label{yy1}%
\end{equation}
and
\begin{equation}
\overrightarrow{u}\times\overrightarrow{v}\times\overrightarrow{w}=\det\left[
\begin{array}
[c]{cccc}%
-e_{1} & e_{2} & e_{3} & e_{4}\\
u_{1} & u_{2} & u_{3} & u_{4}\\
v_{1} & v_{2} & v_{3} & v_{4}\\
w_{1} & w_{2} & w_{3} & w_{4}%
\end{array}
\right]  , \label{yy2}%
\end{equation}
respectively.

A vector $\overrightarrow{u}\in E_{1}^{4}-\{0\}$ is called spacelike if
$\left\langle \overrightarrow{u},\overrightarrow{u}\right\rangle >0$; timelike
if $\left\langle \overrightarrow{u},\overrightarrow{u}\right\rangle <0$ and
lightlike (null) if $\left\langle \overrightarrow{u},\overrightarrow{u}%
\right\rangle =0$. In particular, the vector $\overrightarrow{u}=0$ is
spacelike. Also, the norm of the vector $\overrightarrow{u}$ is $\left\Vert
\overrightarrow{u}\right\Vert =\sqrt{\left\vert \left\langle
\overrightarrow{u},\overrightarrow{u}\right\rangle \right\vert }$. A curve
$\gamma(s)$ in $E_{1}^{4}$ is spacelike, timelike or lightlike (null), if all
its velocity vectors $\gamma^{\prime}(s)$ are spacelike, timelike or
lightlike, respectively and a non-null (i.e. timelike or spacelike) curve has
unit speed if $\left\langle \gamma^{\prime},\gamma^{\prime}\right\rangle
=\mp1$. If $\left\langle \gamma^{\prime\prime}(s),\gamma^{\prime\prime
}(s)\right\rangle =1$, then the null curve $\gamma$ is parametrized by
arclength function $s$. Also, if the principal normal vector or the binormal
vector of a spacelike curve $\gamma(s)$ in $E_{1}^{4}$ is null, then the
spacelike curve $\gamma(s)$ is called a pseudo null curve or a partially null
curve (for detail, one can see \cite{Bonnor1}, \cite{Bonnor2}, \cite{Kuhnel},
\cite{Oneil}, and etc.)

If $\{F_{1},F_{2},F_{3},F_{4}\}$ is the moving Frenet frame along a curve
$\gamma(s)$ in $E_{1}^{4}$ consisting of the unit tangent vector field,
principal normal vector field, binormal vector field and trinormal vector
field, then we have the following cases according to the causal character of
$\gamma$ (see \cite{Bonnor1}, \cite{Bonnor2}, \cite{Kazim}, \cite{Walrave},
and etc):

\textbf{i)} If the curve $\gamma(s)$ is pseudo null, then the Frenet formulas
are%
\begin{equation}
\left.
\begin{array}
[c]{l}%
F_{1}^{\prime}=k_{1}F_{2},\\
F_{2}^{\prime}=k_{2}F_{3},\\
F_{3}^{\prime}=k_{3}F_{2}-k_{2}F_{4},\\
F_{4}^{\prime}=-k_{1}F_{1}-k_{3}F_{3},
\end{array}
\right\}  \label{PsnullFrenet}%
\end{equation}
and we have%
\begin{equation}
\left.
\begin{array}
[c]{l}%
\left\langle F_{1},F_{1}\right\rangle =\left\langle F_{3},F_{3}\right\rangle
=1,\text{ }\left\langle F_{2},F_{2}\right\rangle =\left\langle F_{4}%
,F_{4}\right\rangle =0,\\
\left\langle F_{2},F_{4}\right\rangle =1,\text{ }\left\langle F_{1}%
,F_{2}\right\rangle =\left\langle F_{1},F_{3}\right\rangle =\left\langle
F_{1},F_{4}\right\rangle =\left\langle F_{2},F_{3}\right\rangle =\left\langle
F_{3},F_{4}\right\rangle =0.
\end{array}
\right\}  \label{PsnullFi ler}%
\end{equation}

\textbf{ii)} If the curve $\gamma(s)$ is partially null, then the Frenet
formulas are%
\begin{equation}
\left.
\begin{array}
[c]{l}%
F_{1}^{\prime}=k_{1}F_{2},\\
F_{2}^{\prime}=-k_{1}F_{1}+k_{2}F_{3},\\
F_{3}^{\prime}=k_{3}F_{3},\\
F_{4}^{\prime}=-k_{2}F_{2}-k_{3}F_{4},
\end{array}
\right\}  \label{PartnullFrenet}%
\end{equation}
and we have%
\begin{equation}
\left.
\begin{array}
[c]{l}%
\left\langle F_{1},F_{1}\right\rangle =\left\langle F_{2},F_{2}\right\rangle
=1,\text{ }\left\langle F_{3},F_{3}\right\rangle =\left\langle F_{4}%
,F_{4}\right\rangle =0,\\
\left\langle F_{3},F_{4}\right\rangle =1,\text{ }\left\langle F_{1}%
,F_{2}\right\rangle =\left\langle F_{1},F_{3}\right\rangle =\left\langle
F_{1},F_{4}\right\rangle =\left\langle F_{2},F_{3}\right\rangle =\left\langle
F_{2},F_{4}\right\rangle =0.
\end{array}
\right\}  \label{PartnullFi ler}%
\end{equation}

\textbf{iii)} If the curve $\gamma(s)$ is null, then the Frenet formulas are%
\begin{equation}
\left.
\begin{array}
[c]{l}%
F_{1}^{\prime}=k_{1}F_{2},\\
F_{2}^{\prime}=k_{2}F_{1}-k_{1}F_{3},\\
F_{3}^{\prime}=-k_{2}F_{2}+k_{3}F_{4},\\
F_{4}^{\prime}=-k_{3}F_{1},
\end{array}
\right\}  \label{nullFrenet}%
\end{equation}
and we have%
\begin{equation}
\left.
\begin{array}
[c]{l}%
\left\langle F_{2},F_{2}\right\rangle =\left\langle F_{4},F_{4}\right\rangle
=1,\text{ }\left\langle F_{1},F_{1}\right\rangle =\left\langle F_{3}%
,F_{3}\right\rangle =0,\\
\left\langle F_{1},F_{3}\right\rangle =1,\text{ }\left\langle F_{1}%
,F_{2}\right\rangle =\left\langle F_{1},F_{4}\right\rangle =\left\langle
F_{2},F_{3}\right\rangle =\left\langle F_{2},F_{4}\right\rangle =\left\langle
F_{3},F_{4}\right\rangle =0.
\end{array}
\right\}  \label{nullFi ler}%
\end{equation}

Here, the first curvature $k_{1}(s)=0$, if $\gamma$ is a straight line and
$k_{1}(s)=1$ in all other cases for pseudo null and null curves. So, the
pseudo null and null curves have two curvatures $k_{2}(s)$ and $k_{3}(s)$.
Also, the third curvature $k_{3}(s)=0$ for each $s$ for a partially null curve
and thus, a partially null curve has two curvatures $k_{1}(s)$ and $k_{2}(s)$.

Furthermore, if $p$ is a fixed point in $E_{1}^{4}$ and $r$ is a positive
constant, then the pseudo-Riemannian hypersphere, pseudo-Riemannian hyperbolic
space and pseudo-Riemannian null hypercone are defined by%
\begin{equation}
\left.
\begin{array}
[c]{l}%
S_{1}^{3}(p,r)=\{u\in E_{1}^{4}:\left\langle u-p,u-p\right\rangle =r^{2}\},\\
H_{0}^{3}(p,r)=\{u\in E_{1}^{4}:\left\langle u-p,u-p\right\rangle =-r^{2}\},\\
Q_{1}^{3}=\{u\in E_{1}^{4}:\left\langle u-p,u-p\right\rangle =0\},
\end{array}
\right\}  \label{y1y}%
\end{equation}
respectively.

In the present study, we construct the canal hypersurfaces in $E_{1}^{4}$ as
the envelope of a family of pseudo hyperspheres or pseudo hyperbolic
hyperspheres whose centers lie on a pseudo null, partially null or null curve.

Furthermore, the differential geometry of different types of (hyper)surfaces
in 4-dimensional spaces has been a popular topic for geometers, recently
(\cite{Altinyy}, \cite{Altin4}, \cite{Altin2}, \cite{Altin3}, \cite{Aydin},
\cite{Altin}, \cite{Altiny}, \cite{Kazantek}, \cite{Kisi}, and etc). In this
context, let $\Omega$ be a hypersurface in $E_{1}^{4}$ given by
\begin{align}
\Omega:U\subset E^{3}  &  \longrightarrow E_{1}^{4}\label{yy3}\\
(u_{1},u_{2},u_{3})  &  \longrightarrow\Omega(u_{1},u_{2},u_{3})=(\Omega
_{1}(u_{1},u_{2},u_{3}),\Omega_{2}(u_{1},u_{2},u_{3}),\Omega_{3}(u_{1}%
,u_{2},u_{3}),\Omega_{4}(u_{1},u_{2},u_{3})).\nonumber
\end{align}
Then the Gauss map (i.e., the unit normal vector field), the matrix forms of
the first and second fundamental forms are%
\begin{equation}
N_{\Omega}=\frac{\Omega_{u_{1}}\times\Omega_{u_{2}}\times\Omega_{u_{3}}%
}{\left\Vert \Omega_{u_{1}}\times\Omega_{u_{2}}\times\Omega_{u_{3}}\right\Vert
},\text{ }[g_{ij}]=\left[
\begin{array}
[c]{ccc}%
g_{11} & g_{12} & g_{13}\\
g_{21} & g_{22} & g_{23}\\
g_{31} & g_{32} & g_{33}%
\end{array}
\right]  \text{ and }[h_{ij}]=\left[
\begin{array}
[c]{ccc}%
h_{11} & h_{12} & h_{13}\\
h_{21} & h_{22} & h_{23}\\
h_{31} & h_{32} & h_{33}%
\end{array}
\right]  , \label{4y}%
\end{equation}
respectively. Here $g_{ij}=\left\langle \Omega_{u_{i}},\Omega_{u_{j}%
}\right\rangle ,$ $h_{ij}=\left\langle \Omega_{u_{i}u_{j}},N_{\Omega
}\right\rangle ,$ $\Omega_{u_{i}}=\frac{\partial\Omega}{\partial u_{i}},$
$\Omega_{u_{i}u_{j}}=\frac{\partial^{2}\Omega}{\partial u_{i}u_{j}},$
$i,j\in\{1,2,3\}.$ Also, the matrix of shape operator of the hypersurface
(\ref{yy3}) is%
\begin{equation}
S=[a_{ij}]=[g^{ij}].[h_{ij}], \label{7yyy}%
\end{equation}
where $[g^{ij}]$ is the inverse matrix of $[g_{ij}]$. With the aid of
(\ref{4y}) and (\ref{7yyy}), the Gaussian curvature and mean curvature of a
hypersurface in $E_{1}^{4}$ are given by%
\begin{equation}
K=\varepsilon\frac{\det[h_{ij}]}{\det[g_{ij}]}\text{ \ \ and \ }3\varepsilon
H=tr(S), \label{yy4}%
\end{equation}
respectively. Here, $\varepsilon=\left\langle N_{\Omega},N_{\Omega
}\right\rangle .$ We say that a hypersurface is flat or minimal, if it has
zero Gaussian or zero mean curvature, respectively. For more details about
hypersurfaces in $E_{1}^{4},$ we refer to \cite{Guler1}, \cite{Lee} and etc.

\section{\textbf{CANAL HYPERSURFACES GENERATED BY PSEUDO NULL, PARTIALLY NULL
AND NULL CURVES IN }$E_{1}^{4}$}

In this section, firstly we construct the canal hypersurfaces that are formed
as the envelope of a family of pseudo hyperspheres or pseudo hyperbolic
hyperspheres whose centers lie on a pseudo null curve, partially null curve or
null curve in $E_{1}^{4}$. After that, we obtain some important geometric
invariants such as unit normal vector fields, Gaussian curvatures and mean
curvatures of these canal hypersurfaces, separately.

\begin{theorem}
\textit{The canal hypersurfaces that are formed as the envelope of a family of
pseudo hyperspheres whose centers lie on a pseudo null curve }$\gamma(s)$ with
Frenet vector fields $F_{i},$ $i\in\{1,2,3,4\},$\textit{ in }$E_{1}^{4}$
\textit{can be parametrized by}%
\begin{equation}
\left.
\begin{array}
[c]{l}%
\overset{psd-n}{C_{1}}(s,t,w)=\gamma(s)-r(s)r^{\prime}(s)F_{1}(s)\\
\text{ \ \ }\mp r(s)\sqrt{1-r^{\prime}(s)^{2}}\left(  g(t,w)\sin
(f(t,w))F_{2}(s)+\cos(f(t,w))F_{3}(s)+\frac{\sin(f(t,w))}{2g(t,w)}%
F_{4}(s)\right)  ,\\
\overset{psd-n}{C_{2}}(s,t,w)=\gamma(s)-r(s)r^{\prime}(s)F_{1}(s)\\
\text{ \ \ }\mp r(s)\sqrt{1-r^{\prime}(s)^{2}}\left(  g(t,w)\cos
(f(t,w))F_{2}(s)+\sin(f(t,w))F_{3}(s)+\frac{\cos(f(t,w))}{2g(t,w)}%
F_{4}(s)\right)  ,\\
\overset{psd-n}{C_{3}}(s,t,w)=\gamma(s)-r(s)r^{\prime}(s)F_{1}(s)\\
\text{ \ \ }\mp r(s)\sqrt{1-r^{\prime}(s)^{2}}\left(  g(t,w)\sinh
(f(t,w))F_{2}(s)+\cosh(f(t,w))F_{3}(s)-\frac{\sinh(f(t,w))}{2g(t,w)}%
F_{4}(s)\right)  ,\\
\overset{psd-n}{C_{4}}(s,t,w)=\gamma(s)-r(s)r^{\prime}(s)F_{1}(s)\\
\text{ \ \ }\mp r(s)\sqrt{r^{\prime}(s)^{2}-1}\left(  g(t,w)\cosh
(f(t,w))F_{2}(s)+\sinh(f(t,w))F_{3}(s)-\frac{\cosh(f(t,w))}{2g(t,w)}%
F_{4}(s)\right)  ,
\end{array}
\right\}  \label{canal1}%
\end{equation}
where we suppose $r^{\prime}(s)^{2}<1$ for canal hypersurfaces
$\overset{psd-n}{C_{1}},\overset{psd-n}{C_{2}},\overset{psd-n}{C_{3}}$ and
$r^{\prime}(s)^{2}>1$ for canal hypersurface $\overset{psd-n}{C_{4}}$.

\textit{Furthermore, the canal hypersurfaces that are formed as the envelope
of a family of pseudo hyperbolic hyperspheres whose centers lie on a pseudo
null curve }$\gamma(s)$ with Frenet vector fields $F_{i},$ $i\in\{1,2,3,4\},$
\textit{in }$E_{1}^{4}$ \textit{can be parametrized by}%
\begin{align}
&  \overset{psd-n}{C_{5}}(s,t,w)=\gamma(s)+r(s)r^{\prime}(s)F_{1}%
(s)\label{canal2}\\
&  \text{ \ \ }\mp r(s)\sqrt{1+r^{\prime}(s)^{2}}\left(  g(t,w)\cosh
(f(t,w))F_{2}(s)+\sinh(f(t,w))F_{3}(s)-\frac{\cosh(f(t,w))}{2g(t,w)}%
F_{4}(s)\right)  .\nonumber
\end{align}
Here, the canal hypersurfaces $\overset{psd-n}{C_{1}},$ $\overset{psd-n}{C_{2}%
},$ $\overset{psd-n}{C_{3}},$ $\overset{psd-n}{C_{4}}$ are timelike and the
canal hypersurface $\overset{psd-n}{C_{5}}$ is spacelike.
\end{theorem}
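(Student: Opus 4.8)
The plan is to derive the parametrizations by the classical envelope method, adapted to the Lorentzian causal cases. First I would write the general member of the family of pseudo hyperspheres as $\langle X-\gamma(s), X-\gamma(s)\rangle = r(s)^2$, and impose the envelope condition by differentiating with respect to $s$, which gives $\langle X-\gamma(s), \gamma'(s)\rangle = -r(s)r'(s)$ (using $\gamma'=F_1$ and $\langle F_1,F_1\rangle=1$ from \eqref{PsnullFi ler}). A point $X$ on the envelope is then decomposed in the Frenet frame as $X-\gamma(s) = aF_1 + bF_2 + cF_3 + dF_4$; the second envelope equation immediately forces $a = -r r'$, since $\langle F_1, F_2\rangle = \langle F_1,F_3\rangle = \langle F_1,F_4\rangle = 0$ and $\langle F_1,F_1\rangle = 1$. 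Substituting back into the hypersphere equation and using the inner-product relations \eqref{PsnullFi ler} (namely $\langle F_2,F_4\rangle = 1$, $\langle F_3,F_3\rangle = 1$, all others zero) yields the single scalar constraint $2bd + c^2 = r^2 - r^2 r'^2 = r^2(1-r'^2)$. This is a quadric of signature-type that depends on the sign of $1-r'^2$, and the four branches $C_1,\dots,C_4$ (and the hyperbolic-hypersphere branch $C_5$) correspond to choosing the standard rational/trigonometric/hyperbolic parametrizations of that quadric.

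Next I would exhibit the explicit parametrization of the two-dimensional solution set $\{(b,c,d): 2bd+c^2 = \rho^2\}$ where $\rho^2 = r^2(1-r'^2)$ (resp. $\rho^2=-r^2(1-r'^2)$ for the pseudo hyperbolic case, and $\rho^2 = r^2(1+r'^2)$ for $C_5$). For instance, for $C_1$ one sets $b = \rho\, g(t,w)\sin f(t,w)$, $c = \rho\cos f(t,w)$, $d = \rho\,\frac{\sin f(t,w)}{2g(t,w)}$, and a direct check gives $2bd+c^2 = \rho^2(\sin^2 f + \cos^2 f) = \rho^2$; the variants with $\cos/\sin$ swapped ($C_2$), with $\sinh/\cosh$ and a sign change on the $F_4$-coefficient ($C_3$, $C_4$), and with $\cosh/\sinh$ ($C_5$) are verified the same way, each time reducing to $\pm(\cosh^2-\sinh^2)=\pm1$ or $\sin^2+\cos^2=1$. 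The constraints $r'^2<1$ for $C_1,C_2,C_3$, $r'^2>1$ for $C_4$, and no constraint for $C_5$ are exactly the conditions for the relevant square root $\sqrt{1-r'^2}$, $\sqrt{r'^2-1}$, $\sqrt{1+r'^2}$ to be real and for the coefficient $\rho$ to be real; I would note these as we go. Here the factor $g(t,w)$ is a free nonvanishing function reflecting the one extra degree of freedom of the quadric (it is a cone/cylinder-type surface, not a sphere, because of the null directions $F_2,F_4$), and $f(t,w)$ is the angular coordinate.

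For the causal-character claims I would compute $\langle X-\gamma, X-\gamma\rangle$ restricted to the hypersurface, or more directly observe the following: for $C_1,\dots,C_4$ the defining relation is $\langle X-\gamma,X-\gamma\rangle = r^2>0$, so $X-\gamma$ is spacelike and, since the envelope of pseudo hyperspheres inherits the normal direction along $X-\gamma$ (the common normal of the moving hypersphere and its envelope points radially), the unit normal $N$ is spacelike, hence $\varepsilon = \langle N,N\rangle = 1$ and the tangent hyperplane has Lorentzian induced metric, i.e. the hypersurface is timelike. For $C_5$ the defining relation is $\langle X-\gamma,X-\gamma\rangle = -r^2<0$, so $X-\gamma$ and the normal are timelike, $\varepsilon=-1$, and the hypersurface is spacelike. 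I would make this rigorous by checking that $N_\Omega$ computed from \eqref{4y} is proportional to $X-\gamma$ — equivalently, that $\langle \Omega_{u_i}, X-\gamma\rangle = 0$ for $i=1,2,3$, which follows by differentiating both envelope equations $\langle X-\gamma,X-\gamma\rangle=\pm r^2$ and $\langle X-\gamma,F_1\rangle=\mp rr'$ with respect to $s,t,w$.

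The routine but bookkeeping-heavy obstacle is keeping the Frenet inner-product table \eqref{PsnullFi ler} straight while expanding $\langle X-\gamma,X-\gamma\rangle$ — because $F_2$ and $F_4$ are null with $\langle F_2,F_4\rangle=1$, cross terms behave unusually and the naive "sum of squares" intuition fails; this is precisely why the quadric is $2bd+c^2=\rho^2$ rather than $b^2+c^2+d^2=\rho^2$, and why the parametrizations carry the asymmetric $\frac{1}{2g}$ factor on the $F_4$-component. The only genuinely non-mechanical point is recognizing that the two envelope equations do not pin down a unique hypersurface but a family with the residual freedom encoded by $g(t,w)$; once that is accepted, everything else is substitution and the sign analysis of $1\mp r'^2$.
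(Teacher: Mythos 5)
Your proposal is correct and follows essentially the same route as the paper: decompose $X-\gamma(s)$ in the Frenet frame, use the hypersphere equation together with the envelope condition to force the $F_{1}$-coefficient to equal $-\lambda r r^{\prime}$, reduce to the quadric $c^{2}+2bd=\lambda r^{2}(1-\lambda r^{\prime 2})$ dictated by the degenerate inner-product table of the pseudo null frame, and then parametrize its branches by the trigonometric/hyperbolic substitutions with the free factor $g(t,w)$. The only differences are cosmetic: you extract the $F_{1}$-coefficient from the classical condition $\partial_{s}\left\langle X-\gamma,X-\gamma\right\rangle =\partial_{s}(\lambda r^{2})$ rather than from the paper's equivalent normality relation $\left\langle X-\gamma,(\overset{psd-n}{C})_{s}\right\rangle =0$ combined with the differentiated radius identity, and you additionally justify the timelike/spacelike assertion via $N$ being proportional to $X-\gamma$, a point the paper's proof of this theorem leaves to the later computation of the unit normal.
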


\begin{proof}
Let the center curve $\gamma:I\subseteq%
\mathbb{R}
\rightarrow E_{1}^{4}$ be a pseudo null curve with non-zero curvature with
Frenet vector fields $F_{1}(s),$ $F_{2}(s),$ $F_{3}(s),$ $F_{4}(s)$ called
unit tangent, principal normal, binormal and trinormal vectors of $\gamma(s)$,
respectively. Then, the parametrization of the envelope of pseudo hyperspheres
(or pseudo hyperbolic hyperspheres) defining the canal hypersurfaces
$\overset{{\small psd-n}}{C}(s,t,w)$ in $E_{1}^{4}$ can be given by%
\begin{equation}
\overset{{\small psd-n}}{C}(s,t,w)-\gamma(s)=a_{1}(s,t,w)F_{1}(s)+a_{2}%
(s,t,w)F_{2}(s)+a_{3}(s,t,w)F_{3}(s)+a_{4}(s,t,w)F_{4}(s), \label{1}%
\end{equation}
where $a_{i}(s,t,w)$ are differentiable functions of $s,t,w$ on the interval
$I$. Furthermore, since $\overset{{\small psd-n}}{C}(s,t,w)$ lies on the
pseudo hyperspheres ($\lambda=1$) (or pseudo hyperbolic hyperspheres
($\lambda=-1$)), we have%
\begin{equation}
g(\overset{{\small psd-n}}{C}(s,t,w)-\gamma(s),\overset{{\small psd-n}%
}{C}(s,t,w)-\gamma(s))=\lambda r^{2}(s) \label{2}%
\end{equation}
which leads to from (\ref{1}) and (\ref{PsnullFi ler}) that%
\begin{equation}
a_{1}^{2}+a_{3}^{2}+2a_{2}a_{4}=\lambda r^{2} \label{3}%
\end{equation}
and%
\begin{equation}
a_{1}a_{1_{s}}+a_{3}a_{3_{s}}+a_{4}a_{2_{s}}+a_{2}a_{4_{s}}=\lambda rr_{s},
\label{4}%
\end{equation}
where $r(s)$ is the radius function; $r=r(s),$ $r_{s}=\frac{dr(s)}{ds},$
$a_{i}=a_{i}(s,t,w),$ $a_{i_{s}}=\frac{\partial a_{i}(s,t,w)}{\partial s}$.

So, differentiating (\ref{1}) with respect to $s$ and using the Frenet formula
(\ref{PsnullFrenet}), we get%
\begin{align}
(\overset{psd-n}{C})_{s}  &  =\left(  1-a_{4}k_{1}+a_{1_{s}}\right)
F_{1}+\left(  a_{1}k_{1}+a_{3}k_{3}+a_{2_{s}}\right)  F_{2}\label{5'}\\
&  +\left(  a_{2}k_{2}-a_{4}k_{3}+a_{3_{s}}\right)  F_{3}+\left(  -a_{3}%
k_{2}+a_{4_{s}}\right)  F_{4},\nonumber
\end{align}
where $(\overset{psd-n}{C})_{s}=\frac{\partial\left(  \overset{psd-n}{C}%
(s,t,w)\right)  }{\partial s}$. Furthermore, $\overset{psd-n}{C}%
(s,t,w)-\gamma(s)$ is a normal vector to the canal hypersurfaces, which
implies that%
\begin{equation}
g(\overset{psd-n}{C}(s,t,w)-\gamma(s),(\overset{psd-n}{C})_{s}(s,t,w))=0
\label{6}%
\end{equation}
and so, from (\ref{1}), (\ref{5'}) and (\ref{6}) we have%
\begin{equation}
\text{ }\left(
\begin{array}
[c]{l}%
a_{1}\left(  1-a_{4}k_{1}+a_{1_{s}}\right)  +a_{2}\left(  -a_{3}k_{2}%
+a_{4_{s}}\right) \\
+a_{3}\left(  a_{2}k_{2}-a_{4}k_{3}+a_{3_{s}}\right)  +a_{4}\left(  a_{1}%
k_{1}+a_{3}k_{3}+a_{2_{s}}\right)
\end{array}
\right)  =0. \label{6y}%
\end{equation}
Using (\ref{4}) in (\ref{6y}), we get
\begin{equation}
a_{1}=-\lambda rr_{s}. \label{7y}%
\end{equation}
Hence, using (\ref{7y}) in (\ref{3}), we reach that%
\begin{equation}
{\small a}_{3}^{2}{\small +}2{\small a}_{2}a_{4}{\small =\lambda r}%
^{2}{\small (}1-\lambda{\small r}_{s}^{2}{\small ).} \label{8}%
\end{equation}
Therefore from (\ref{7y}) and (\ref{8}), the canal hypersurfaces
$\overset{psd-n}{C}(s,t,w)$ that are formed as the envelope of a family of
pseudo hyperspheres or pseudo hyperbolic hyperspheres whose centers lie on a
pseudo null curve in $E_{1}^{4}$ can be parametrized by (\ref{canal1}) or
(\ref{canal2}), respectively.
\end{proof}

Here we must note that, from now on we will state $r^{\prime}=r_{s}(s),$
$r^{\prime\prime}=\frac{d^{2}r(s)}{ds^{2}},$ $f=f(t,w)$, $g=g(t,w),$ $\sin
f=\sin(f(t,w)),$ $f_{t}=\frac{\partial f(t,w)}{\partial t},$ $f_{tt}%
=\frac{\partial f^{2}(t,w)}{\partial t^{2}},$ and so on.

\begin{theorem}
\label{Teopseudonull}\textit{The Gaussian and mean curvatures of the canal
hypersurfaces }$\overset{psd-n}{C_{i}},$ $i\in\{1,2,3,4,5\},$ given by
(\ref{canal1}) and (\ref{canal2}) in $E_{1}^{4}$ are%
\begin{equation}
\left.
\begin{array}
[c]{l}%
\left.
\begin{array}
[c]{l}%
K_{\overset{psd-n}{C_{1}}}=\frac{-r\left(  1-{\small r}^{\prime2}\right)
k_{1}^{2}\sin^{2}f{\small +4}r^{\prime\prime}\left(  1-r^{\prime2}%
-rr^{\prime\prime}\right)  {\small g}^{2}+2\sqrt{1-{\small r}^{\prime2}%
}\left(  1-r^{\prime2}-2rr^{\prime\prime}\right)  k_{1}g\sin f}{r^{2}\left(
r\sqrt{1-{\small r}^{\prime2}}k_{1}\sin f-2\left(  1-r^{\prime2}%
-rr^{\prime\prime}\right)  g\right)  ^{2}}\\
H_{\overset{psd-n}{C_{1}}}=\frac{2r\left(  1-{\small r}^{\prime2}\right)
^{3/2}k_{1}g\sin f+3r^{2}\left(  1-{\small r}^{\prime2}\right)  k_{1}^{2}%
\sin^{2}f-4\left(  1-r^{\prime2}-rr^{\prime\prime}\right)  \left(
2-2r^{\prime2}-3rr^{\prime\prime}\right)  g^{2}}{3\left(  -r^{3}\left(
1-{\small r}^{\prime2}\right)  k_{1}^{2}\sin^{2}f+4r\left(  1-r^{\prime
2}-rr^{\prime\prime}\right)  ^{2}g^{2}\right)  }%
\end{array}
\right\}  ,\\
\\
\left.
\begin{array}
[c]{l}%
K_{\overset{psd-n}{C_{2}}}=\frac{r\left(  1-{\small r}^{\prime2}\right)
k_{1}^{2}\cos^{2}f{\small -4}r^{\prime\prime}\left(  1-r^{\prime2}%
-rr^{\prime\prime}\right)  {\small g}^{2}-2\sqrt{1-{\small r}^{\prime2}%
}\left(  1-r^{\prime2}-2rr^{\prime\prime}\right)  k_{1}g\cos f}{r^{2}\left(
r\sqrt{1-{\small r}^{\prime2}}k_{1}\cos f-2\left(  1-r^{\prime2}%
-rr^{\prime\prime}\right)  g\right)  ^{2}}\\
H_{\overset{psd-n}{C_{2}}}=\frac{-2r\left(  1-{\small r}^{\prime2}\right)
^{3/2}k_{1}g\cos f-3r^{2}\left(  1-{\small r}^{\prime2}\right)  k_{1}^{2}%
\cos^{2}f+4\left(  1-r^{\prime2}-rr^{\prime\prime}\right)  \left(
2-2r^{\prime2}-3rr^{\prime\prime}\right)  g^{2}}{3\left(  -r^{3}\left(
1-{\small r}^{\prime2}\right)  k_{1}^{2}\cos^{2}f+4r\left(  1-r^{\prime
2}-rr^{\prime\prime}\right)  ^{2}g^{2}\right)  }%
\end{array}
\right\}  ,\\
\\
\left.
\begin{array}
[c]{l}%
K_{\overset{psd-n}{C_{3}}}=\frac{-r\left(  1-{\small r}^{\prime2}\right)
k_{1}^{2}\sinh^{2}f{\small +4}r^{\prime\prime}\left(  1-r^{\prime2}%
-rr^{\prime\prime}\right)  {\small g}^{2}-2\sqrt{1-{\small r}^{\prime2}%
}\left(  1-r^{\prime2}-2rr^{\prime\prime}\right)  k_{1}g\sinh f}{r^{2}\left(
r\sqrt{1-{\small r}^{\prime2}}k_{1}\sinh f+2\left(  1-r^{\prime2}%
-rr^{\prime\prime}\right)  g\right)  ^{2}}\\
H_{\overset{psd-n}{C_{3}}}=\frac{-2r\left(  1-{\small r}^{\prime2}\right)
^{3/2}k_{1}g\sinh f+3r^{2}\left(  1-{\small r}^{\prime2}\right)  k_{1}%
^{2}\sinh^{2}f-4\left(  1-r^{\prime2}-rr^{\prime\prime}\right)  \left(
2-2r^{\prime2}-3rr^{\prime\prime}\right)  g^{2}}{3\left(  -r^{3}\left(
1-{\small r}^{\prime2}\right)  k_{1}^{2}\sinh^{2}f+4r\left(  1-r^{\prime
2}-rr^{\prime\prime}\right)  ^{2}g^{2}\right)  }%
\end{array}
\right\}  ,\\
\\
\left.
\begin{array}
[c]{l}%
K_{\overset{psd-n}{C_{4}}}=\frac{-r\left(  {\small r}^{\prime2}-1\right)
k_{1}^{2}\cosh^{2}f{\small -4}r^{\prime\prime}\left(  -1+r^{\prime
2}+rr^{\prime\prime}\right)  {\small g}^{2}+2\sqrt{-1+{\small r}^{\prime2}%
}\left(  -1+r^{\prime2}+2rr^{\prime\prime}\right)  k_{1}g\cosh f}{r^{2}\left(
r\sqrt{{\small r}^{\prime2}-1}k_{1}\cosh f-2\left(  -1+r^{\prime2}%
+rr^{\prime\prime}\right)  g\right)  ^{2}}\\
H_{\overset{psd-n}{C_{4}}}=\frac{-2r\left(  {\small r}^{\prime2}-1\right)
^{3/2}k_{1}g\cosh f-3r^{2}\left(  {\small r}^{\prime2}-1\right)  k_{1}%
^{2}\cosh^{2}f+4\left(  -1+r^{\prime2}+rr^{\prime\prime}\right)  \left(
-2+2r^{\prime2}+3rr^{\prime\prime}\right)  g^{2}}{3\left(  r^{3}\left(
{\small r}^{\prime2}-1\right)  k_{1}^{2}\cosh^{2}f-4r\left(  -1+r^{\prime
2}+rr^{\prime\prime}\right)  ^{2}g^{2}\right)  }%
\end{array}
\right\}  ,\\
\\
\left.
\begin{array}
[c]{l}%
K_{\overset{psd-n}{C_{5}}}=\frac{r\left(  1+{\small r}^{\prime2}\right)
k_{1}^{2}\cosh^{2}f+{\small 4r^{\prime\prime}\left(  1+r^{\prime2}%
+rr^{\prime\prime}\right)  g}^{2}+2\sqrt{1+{\small r}^{\prime2}}\left(
1+r^{\prime2}+2rr^{\prime\prime}\right)  k_{1}g\cosh f}{r^{2}\left(
r\sqrt{1+{\small r}^{\prime2}}k_{1}\cosh f+2\left(  1+r^{\prime2}%
+rr^{\prime\prime}\right)  g\right)  ^{2}}\\
H_{\overset{psd-n}{C_{5}}}=\frac{-2r\left(  1+{\small r}^{\prime2}\right)
^{3/2}k_{1}g\cosh f+3r^{2}\left(  1+{\small r}^{\prime2}\right)  k_{1}%
^{2}\cosh^{2}f-4\left(  1+r^{\prime2}+rr^{\prime\prime}\right)  \left(
2+2r^{\prime2}+3rr^{\prime\prime}\right)  g^{2}}{3\left(  r^{3}\left(
1+{\small r}^{\prime2}\right)  k_{1}^{2}\cosh^{2}f-4r\left(  1+r^{\prime
2}+rr^{\prime\prime}\right)  ^{2}g^{2}\right)  }%
\end{array}
\right\}  .
\end{array}
\right\}  \label{KHpsdn}%
\end{equation}

\end{theorem}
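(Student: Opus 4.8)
The plan is to carry out the standard differential-geometric computation for each of the five parametrized hypersurfaces $\overset{psd-n}{C_i}(s,t,w)$ given by (\ref{canal1}) and (\ref{canal2}), specializing the general formulas (\ref{4y})--(\ref{yy4}) to the pseudo null Frenet apparatus (\ref{PsnullFrenet})--(\ref{PsnullFi ler}). Since the five cases are structurally identical (they differ only by whether the angular functions are $\sin/\cos$, $\sinh/\cosh$, by sign choices, and by the coefficient $\sqrt{1-r'^2}$ versus $\sqrt{r'^2-1}$ or $\sqrt{1+r'^2}$), I would present the computation in detail for $\overset{psd-n}{C_1}$ and then indicate that the remaining four are obtained by the same procedure with the obvious substitutions. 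Throughout I use the abbreviations declared after the first theorem ($r'=r_s$, $f=f(t,w)$, $g=g(t,w)$, etc.) and recall $k_1=1$ for a non-straight pseudo null curve.

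First I would compute the three partial derivatives $\Omega_s=(\overset{psd-n}{C_1})_s$, $\Omega_t$, $\Omega_w$. Differentiating (\ref{canal1}) in $s$ and applying (\ref{PsnullFrenet}) to reduce all $F_i'$ terms, one expresses $\Omega_s$ in the $\{F_1,F_2,F_3,F_4\}$ basis; the coefficients involve $r,r',r'',k_1$ and the trigonometric data. Differentiating in $t$ and $w$ is easier because $\gamma(s)$ and $r(s)$ are constant in those variables, so $\Omega_t$ and $\Omega_w$ are combinations of $F_2,F_3,F_4$ only, with coefficients built from $f_t,g_t,f_w,g_w$. Then, using the inner-product relations (\ref{PsnullFi ler}) — in particular $\langle F_2,F_2\rangle=\langle F_4,F_4\rangle=0$, $\langle F_2,F_4\rangle=1$, $\langle F_1,F_1\rangle=\langle F_3,F_3\rangle=1$ and all other pairings zero — I assemble the matrix $[g_{ij}]$. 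A key simplification, which I would record as an intermediate claim, is that the ``$\mp r\sqrt{1-r'^2}\,(\cdots)$'' part of $\Omega-\gamma$ is the position vector relative to $\gamma$ after the $F_1$-shift, and that the specific combination $g\sin f\,F_2+\cos f\,F_3+\frac{\sin f}{2g}F_4$ has squared norm exactly $1$ (using $\langle F_2,F_4\rangle=1$): indeed its self-inner-product is $2\cdot g\sin f\cdot\frac{\sin f}{2g}+\cos^2 f=\sin^2 f+\cos^2 f=1$. This is precisely why $\overset{psd-n}{C_1}-\gamma$ lies on the pseudo hypersphere of radius $r$, and it makes the normal vector essentially that unit combination, so $\varepsilon=\langle N,N\rangle$ is readily determined (giving the timelike/spacelike character asserted in the first theorem and hence the sign $\varepsilon$ in (\ref{yy4})).

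Next I would compute the Gauss map $N$ from (\ref{4y}) as the normalized triple vector product $\Omega_s\times\Omega_t\times\Omega_w$, or — more efficiently — use the fact that $\Omega-\gamma$ is normal to the hypersurface (equation (\ref{6}) and its $t,w$-analogues, which hold because $\overset{psd-n}{C_1}$ is an envelope) together with the norm computation above to write $N$ directly as $\mp(g\sin f\,F_2+\cos f\,F_3+\frac{\sin f}{2g}F_4)$ up to sign. With $N$ in hand I compute the six second-order partials $\Omega_{u_iu_j}$ (again reducing via the Frenet formulas) and then $h_{ij}=\langle\Omega_{u_iu_j},N\rangle$, using (\ref{PsnullFi ler}) to kill the many orthogonal terms. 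Finally I substitute $[g_{ij}]$ and $[h_{ij}]$ into (\ref{yy4}): $K=\varepsilon\det[h_{ij}]/\det[g_{ij}]$ and $3\varepsilon H=\operatorname{tr}([g^{ij}][h_{ij}])$, and simplify. The expressions in (\ref{KHpsdn}) suggest that, after the dust settles, $\det[g_{ij}]$ factors as (something like) $r^2(r\sqrt{1-r'^2}k_1\sin f-2(1-r'^2-rr'')g)^2$ times a harmless positive factor coming from the $t,w$ reparametrization Jacobian (the $f_tg_w-f_wg_t$ type term), which cancels against the same factor in $\det[h_{ij}]$, leaving the stated rational function of $r,r',r'',k_1,f,g$.

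The main obstacle I anticipate is purely the bookkeeping weight of the second fundamental form: computing $\Omega_{ss}$ requires differentiating the already-long expression for $\Omega_s$ and re-reducing with (\ref{PsnullFrenet}), which produces $r''$ and $k_1$-derivative terms, and then one must carefully track which terms survive the pairing with the null-heavy normal $N$. A secondary subtlety is getting the overall sign/branch right — the ``$\mp$'' in (\ref{canal1}) and the value of $\varepsilon$ must be handled consistently so that the final $K$ and $H$ come out with the signs displayed in (\ref{KHpsdn}); I would fix the sign convention once (say, the upper sign) and note the lower-sign case is identical. I do not expect any genuinely nonroutine step beyond organizing this computation; the envelope condition (\ref{6}) and the unit-norm identity for the angular combination are the two structural facts that make everything collapse to the stated closed forms, and the remaining four hypersurfaces follow by replacing $(\sin f,\cos f,\frac{\sin f}{2g})$ with the appropriate hyperbolic or swapped triple and $1-r'^2$ with $r'^2-1$ or $1+r'^2$ as indicated.
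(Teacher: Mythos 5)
Your overall plan is the same as the paper's: compute the three first partials of $\overset{psd-n}{C_1}$ via the Frenet formulas (\ref{PsnullFrenet}), assemble $[g_{ij}]$ from the scalar products (\ref{PsnullFi ler}), obtain the Gauss map, compute the second partials and $[h_{ij}]$, and feed everything into (\ref{yy4}); the paper does exactly this for $\overset{psd-n}{C_1}$ and dismisses the other four cases as analogous, just as you propose. Your observations that the combination $g\sin f\,F_2+\cos f\,F_3+\frac{\sin f}{2g}F_4$ has unit norm and that the factor $(f_tg_w-f_wg_t)^2\sin^2 f$ cancels between $\det[h_{ij}]$ and $\det[g_{ij}]$ are both borne out by the paper's formulas (\ref{14}) and (\ref{22}).

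One concrete slip: your ``more efficient'' route writes the unit normal as $\mp\bigl(g\sin f\,F_2+\cos f\,F_3+\frac{\sin f}{2g}F_4\bigr)$. That is not the normal. Since $\overset{psd-n}{C_1}-\gamma$ is the normal direction and has norm $r$, the correct Gauss map is $N=\frac{1}{r}\bigl(\overset{psd-n}{C_1}-\gamma\bigr)=-r'F_1+\sqrt{1-r'^2}\bigl(g\sin f\,F_2+\cos f\,F_3+\frac{\sin f}{2g}F_4\bigr)$, which is the paper's (\ref{12}); you dropped the $-r'F_1$ term and the factor $\sqrt{1-r'^2}$. The truncated vector is not even orthogonal to $(\overset{psd-n}{C_1})_s$ (orthogonality holds only because the $-r'F_1$ piece cancels the $F_1$-contribution of the tangent), and since second derivatives such as $(\overset{psd-n}{C_1})_{ss}$ carry nonzero $F_1$-components pairing with $\langle F_1,F_1\rangle=1$, every $h_{1j}$ --- and hence $K$ and $H$ --- would come out wrong if you actually used it. Your primary route of computing $N$ as the normalized triple product is correct and is what the paper does, so the proof goes through provided you take that route (or restore the $F_1$-component in the shortcut).
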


\begin{proof}
Here we will obtain the unit normal vector field, Gaussian and mean curvatures
of the canal hypersurfaces $\overset{psd-n}{C_{1}}(s,t,w)$ given by%
\begin{equation}%
\begin{array}
[c]{l}%
\overset{psd-n}{C_{1}}(s,t,w)=\gamma(s)-r(s)r^{\prime}(s)F_{1}(s)\\
\text{ \ \ }+r(s)\sqrt{1-r^{\prime}(s)^{2}}\left(  g(t,w)\sin(f(t,w))F_{2}%
(s)+\cos(f(t,w))F_{3}(s)+\frac{\sin(f(t,w))}{2g(t,w)}F_{4}(s)\right)  ,
\end{array}
\label{X1}%
\end{equation}
in $E_{1}^{4}$. Firstly, from (\ref{PsnullFrenet}), the first derivatives of
the canal hypersurface (\ref{X1}) are obtained as%
\begin{align}
&  (\overset{psd-n}{C_{1}})_{s}=A_{1}F_{1}+A_{2}F_{2}+A_{3}F_{3}+A_{4}%
F_{4},\label{9}\\
&  (\overset{psd-n}{C_{1}})_{t}=r\sqrt{1-r^{\prime2}}\left(  gf_{t}\cos
f+g_{t}\sin f\right)  F_{2}-r\sqrt{1-r^{\prime2}}f_{t}\sin fF_{3}\label{10}\\
&  \text{ \ \ \ \ \ \ \ \ \ \ \ }+\frac{r\sqrt{1-r^{\prime2}}}{2g^{2}}\left(
gf_{t}\cos f-g_{t}\sin f\right)  F_{4},\nonumber\\
&  (\overset{psd-n}{C_{1}})_{w}=r\sqrt{1-r^{\prime2}}\left(  gf_{w}\cos
f+g_{w}\sin f\right)  F_{2}-r\sqrt{1-r^{\prime2}}f_{w}\sin fF_{3}\label{11}\\
&  \text{ \ \ \ \ \ \ \ \ \ \ \ \ }+\frac{r\sqrt{1-r^{\prime2}}}{2g^{2}%
}\left(  gf_{w}\cos f-g_{w}\sin f\right)  F_{4},\nonumber
\end{align}
where%
\[
\left.
\begin{array}
[c]{l}%
A_{1}=1-r^{\prime}{}^{2}-rr^{\prime\prime}-\frac{r\sqrt{1-r^{\prime}{}^{2}%
}k_{1}\sin f}{2g},\\
A_{2}=\frac{1}{\sqrt{1-r^{\prime2}}}\left(  r\left(  1-r^{\prime}{}%
^{2}\right)  k_{3}\cos f-r^{\prime}\left(  r\sqrt{1-r^{\prime2}}k_{1}-\left(
1-r^{\prime}{}^{2}-rr^{\prime\prime}\right)  g\sin f\right)  \right)  ,\\
A_{3}=\frac{\left(  r\left(  1-r^{\prime}{}^{2}\right)  \left(  2k_{2}%
g^{2}-k_{3}\right)  \sin f\right)  +2r^{\prime}\left(  1-r^{\prime}{}%
^{2}-rr^{\prime\prime}\right)  g\cos f}{2g\sqrt{1-r^{\prime}{}^{2}}},\\
A_{4}=\frac{-2r\left(  1-r^{\prime}{}^{2}\right)  k_{2}g\cos f+r^{\prime
}\left(  1-r^{\prime}{}^{2}-rr^{\prime\prime}\right)  \sin f}{2g\sqrt
{1-r^{\prime}{}^{2}}}.
\end{array}
\right\}
\]
From (\ref{4y}) and (\ref{9})-(\ref{11}), the unit normal vector field of
$\overset{psd-n}{C_{1}}$ in $E^{4}$ is%
\begin{equation}
N=-r^{\prime}F_{1}+\sqrt{1-r^{\prime2}}\left(  g\sin fF_{2}+\cos fF_{3}%
+\frac{\sin f}{2g}F_{4}\right)  . \label{12}%
\end{equation}
and we get $\left\langle N,N\right\rangle =1$. Also, the coefficients of the
first fundamental form are given by%
\begin{equation}
\left.
\begin{array}
[c]{l}%
g_{11}=\frac{1}{4g^{2}\left(  1-r^{\prime2}\right)  }\left(
\begin{array}
[c]{l}%
\left(  1-r^{\prime2}\right)  \left(  r\sqrt{1-r^{\prime2}}k_{1}\sin
f-2\left(  1-r^{\prime2}-rr^{\prime\prime}\right)  g\right)  ^{2}\\
+\left(  r\left(  1-r^{\prime2}\right)  \left(  2k_{2}g^{2}-k_{3}\right)  \sin
f+2r^{\prime}\left(  1-r^{\prime2}-rr^{\prime\prime}\right)  g\cos f\right)
^{2}\\
+4g\left(
\begin{array}
[c]{l}%
2r\left(  1-r^{\prime2}\right)  k_{2}g\cos f\\
-r^{\prime}\left(  1-r^{\prime2}-rr^{\prime\prime}\right)  \sin f
\end{array}
\right)  \left(
\begin{array}
[c]{l}%
-r\left(  1-r^{\prime2}\right)  k_{3}\cos f\\
+r^{\prime}\left(
\begin{array}
[c]{l}%
r\sqrt{1-r^{\prime2}}k_{1}\\
-\left(  1-r^{\prime2}-rr^{\prime\prime}\right)  g\sin f
\end{array}
\right)
\end{array}
\right)
\end{array}
\right)  ,\\
\\
g_{12}=g_{21}=\frac{r^{2}}{2g^{2}}\left(
\begin{array}
[c]{l}%
-2\left(  1-r^{\prime2}\right)  k_{2}g^{3}f_{t}-\left(  r^{\prime}%
\sqrt{1-r^{\prime2}}k_{1}\cos f-\left(  1-r^{\prime2}\right)  k_{3}\right)
gf_{t}\\
-\left(  1-r^{\prime2}\right)  k_{2}g^{2}g_{t}\sin\left(  2f\right)  +\left(
r^{\prime}\sqrt{1-r^{\prime2}}k_{1}-\left(  1-r^{\prime2}\right)  k_{3}\cos
f\right)  g_{t}\sin f
\end{array}
\right)  ,\\
\\
g_{13}=g_{31}=\frac{r^{2}}{2g^{2}}\left(
\begin{array}
[c]{l}%
-2\left(  1-r^{\prime2}\right)  k_{2}g^{3}f_{w}-\left(  r^{\prime}%
\sqrt{1-r^{\prime2}}k_{1}\cos f-\left(  1-r^{\prime2}\right)  k_{3}\right)
gf_{w}\\
-\left(  1-r^{\prime2}\right)  k_{2}g^{2}g_{w}\sin\left(  2f\right)  +\left(
r^{\prime}\sqrt{1-r^{\prime2}}k_{1}-\left(  1-r^{\prime2}\right)  k_{3}\cos
f\right)  g_{w}\sin f
\end{array}
\right)  ,\\
\\
g_{22}=\frac{r^{2}}{g^{2}}\left(  1-r^{\prime2}\right)  \left(  g^{2}f_{t}%
^{2}-g_{t}^{2}\sin^{2}f\right)  ,\\
\\
g_{23}=g_{32}=\frac{r^{2}}{g^{2}}\left(  1-r^{\prime2}\right)  \left(
g^{2}f_{t}f_{w}-g_{t}g_{w}\sin^{2}f\right)  ,\\
\\
g_{33}=\frac{r^{2}}{g^{2}}\left(  1-r^{\prime2}\right)  \left(  g^{2}f_{w}%
^{2}-g_{w}^{2}\sin^{2}f\right)  ,
\end{array}
\right\}  \label{13}%
\end{equation}
and it follows that%
\begin{equation}
\det[g_{ij}]=\frac{-r^{4}\left(  1-r^{\prime2}\right)  \left(  r\sqrt
{1-r^{\prime2}}k_{1}\sin f-2\left(  1-r^{\prime2}-rr^{\prime\prime}\right)
g\right)  ^{2}\left(  g_{w}f_{t}-f_{w}g_{t}\right)  ^{2}\sin^{2}f}{4g^{4}}.
\label{14}%
\end{equation}

Now, for obtaining the coefficients of the second fundamental form, let us
give the second derivatives $(\overset{psd-n}{C})_{x_{i}x_{j}}=\frac
{\partial^{2}\overset{psd-n}{C}}{\partial x_{i}x_{j}}$ of the canal
hypersurface (\ref{X1}):%
\begin{equation}
(\overset{psd-n}{C_{1}})_{ss}=B_{1}F_{1}+B_{2}F_{2}+B_{3}F_{3}+B_{4}F_{4},
\label{15}%
\end{equation}%
\begin{align}
&  (\overset{psd-n}{C_{1}})_{st}=(\overset{psd-n}{C_{1}})_{ts}=\frac
{r\sqrt{1-r^{\prime2}}k_{1}\left(  -gf_{t}\cos f+g_{t}\sin f\right)  }{2g^{2}%
}F_{1}\label{16'}\\
&  \text{ \ \ \ }-\frac{r\left(  1-r^{\prime2}\right)  k_{3}f_{t}\sin
f-r^{\prime}\left(  1-r^{\prime2}-rr^{\prime\prime}\right)  \left(  gf_{t}\cos
f+g_{t}\sin f\right)  }{\sqrt{1-r^{\prime2}}}F_{2}\nonumber\\
&  \text{ \ \ \ }+\frac{\left(
\begin{array}
[c]{l}%
\left(  r\left(  1-r^{\prime2}\right)  \left(  2k_{2}g^{2}-k_{3}\right)  \cos
f-2r^{\prime}\left(  1-r^{\prime2}-rr^{\prime\prime}\right)  g\sin f\right)
gf_{t}\\
+r\left(  1-r^{\prime2}\right)  \left(  2g^{2}k_{2}+k_{3}\right)  g_{t}\sin f
\end{array}
\right)  }{2\sqrt{1-r^{\prime2}}g^{2}}F_{3}\nonumber\\
&  \text{ \ \ \ }+\frac{\left(  \left(  2r\left(  1-r^{\prime2}\right)
k_{2}g\sin f+r^{\prime}\left(  1-r^{\prime2}-rr^{\prime\prime}\right)  \cos
f\right)  gf_{t}-r^{\prime}\left(  1-r^{\prime2}-rr^{\prime\prime}\right)
g_{t}\sin f\right)  }{2\sqrt{1-r^{\prime2}}g^{2}}F_{4}\nonumber
\end{align}%
\begin{align}
&  (\overset{psd-n}{C_{1}})_{sw}=(\overset{psd-n}{C_{1}})_{ws}=\frac
{r\sqrt{1-r^{\prime2}}k_{1}}{2g^{2}}\left(  -gf_{w}\cos f+g_{w}\sin f\right)
F_{1}\label{17}\\
&  \text{ \ \ \ \ }+\frac{1}{\sqrt{1-r^{\prime2}}}\left(  -r\left(
1-r^{\prime2}\right)  k_{3}f_{w}\sin f+r^{\prime}\left(  1-r^{\prime
2}-rr^{\prime\prime}\right)  \left(  gf_{w}\cos f+g_{w}\sin f\right)  \right)
F_{2}\nonumber\\
&  \text{ \ \ \ \ }+\frac{1}{2\sqrt{1-r^{\prime2}}g^{2}}\left(
\begin{array}
[c]{l}%
\left(  r\left(  1-r^{\prime2}\right)  \left(  2k_{2}g^{2}-k_{3}\right)  \cos
f-2r^{\prime}\left(  1-r^{\prime2}-rr^{\prime\prime}\right)  g\sin f\right)
gf_{w}\\
+r\left(  1-r^{\prime2}\right)  \left(  2k_{2}g^{2}+k_{3}\right)  g_{w}\sin f
\end{array}
\right)  F_{3}\nonumber\\
&  \text{ \ \ \ \ }+\frac{1}{2\sqrt{1-r^{\prime2}}g^{2}}\left(
\begin{array}
[c]{l}%
\left(  2r\left(  1-r^{\prime2}\right)  k_{2}g\sin f+r^{\prime}\left(
1-r^{\prime2}-rr^{\prime\prime}\right)  \cos f\right)  gf_{w}\\
-r^{\prime}\left(  1-r^{\prime2}-rr^{\prime\prime}\right)  g_{w}\sin f
\end{array}
\right)  F_{4}\nonumber
\end{align}%
\begin{align}
&  (\overset{psd-n}{C_{1}})_{tt}=r\sqrt{1-r^{\prime2}}\left(  \left(
2f_{t}g_{t}+gf_{tt}\right)  \cos f+\left(  -gf_{t}^{2}+g_{tt}\right)  \sin
f\right)  F_{2}\label{18}\\
&  \text{ \ \ \ }-r\sqrt{1-r^{\prime2}}\left(  f_{t}^{2}\cos f+f_{tt}\sin
f\right)  F_{3}\nonumber\\
&  \text{ \ \ \ }-\frac{r\sqrt{1-r^{\prime2}}}{2g^{3}}\left(  g\left(
2f_{t}g_{t}-gf_{tt}\right)  \cos f+\left(  -2g_{t}^{2}+g\left(  gf_{t}%
^{2}+g_{tt}\right)  \right)  \sin f\right)  F_{4}\nonumber
\end{align}%
\begin{align}
&  (\overset{psd-n}{C_{1}})_{tw}=(\overset{psd-n}{C_{1}})_{wt}=r\sqrt
{1-r^{\prime2}}\left(  \left(  f_{t}g_{w}+f_{w}g_{t}+gf_{tw}\right)  \cos
f+\left(  -gf_{t}f_{w}+g_{tw}\right)  \sin f\right)  F_{2}\label{19}\\
&  \text{ \ \ \ }-r\sqrt{1-r^{\prime2}}\left(  f_{t}f_{w}\cos f+f_{tw}\sin
f\right)  F_{3}\nonumber\\
&  \text{ \ \ \ }-\frac{r\sqrt{1-r^{\prime2}}}{2g^{3}}\left(  g\left(
f_{t}g_{w}+f_{w}g_{t}-gf_{tw}\right)  \cos f+\left(  -2g_{t}g_{w}+g\left(
gf_{t}f_{w}+g_{tw}\right)  \right)  \sin f\right)  F_{4}\nonumber
\end{align}
and
\begin{align}
&  (\overset{psd-n}{C_{1}})_{ww}=r\sqrt{1-r^{\prime2}}\left(  \left(
2f_{w}g_{w}+gf_{ww}\right)  \cos f+\left(  -gf_{w}^{2}+g_{ww}\right)  \sin
f\right)  F_{2}\label{20}\\
&  \text{ \ \ \ }-r\sqrt{1-r^{\prime2}}\left(  f_{w}^{2}\cos f+f_{ww}\sin
f\right)  F_{3}\nonumber\\
&  \text{ \ \ \ }-\frac{r\sqrt{1-r^{\prime2}}}{2g^{3}}\left(  g\left(
2f_{w}g_{w}-gf_{ww}\right)  \cos f+\left(  -2g_{w}^{2}+g\left(  gf_{w}%
^{2}+g_{ww}\right)  \right)  \sin f\right)  F_{4},\nonumber
\end{align}
where%
\[%
\begin{array}
[c]{l}%
B_{1}=\frac{-r\sqrt{1-r^{\prime2}}k_{1}^{\prime}\sin f}{2g}+\frac{k_{1}\left(
r\left(  1-r^{\prime2}\right)  k_{2}g\cos f-r^{\prime}\left(  1-r^{\prime
2}-rr^{\prime\prime}\right)  \sin f\right)  }{g\sqrt{1-r^{\prime2}}%
}-3r^{\prime}r^{\prime\prime}-rr^{\prime\prime\prime},\\
\\
B_{2}=\frac{-r\sqrt{1-r^{\prime2}}k_{1}^{2}\sin f}{2g}+\left(  1-2r^{\prime
2}-2rr^{\prime\prime}\right)  k_{1}\\
-\frac{1}{2g\left(  1-r^{\prime2}\right)  ^{3/2}}\left(
\begin{array}
[c]{l}%
r\left(  1-r^{\prime2}\right)  ^{2}k_{3}^{2}\sin f+2\left(  1-r^{\prime
2}\right)  g\left(
\begin{array}
[c]{l}%
rr^{\prime}\sqrt{1-r^{\prime2}}k_{1}^{\prime}\\
-\left(
\begin{array}
[c]{l}%
\left(  rk_{3}^{\prime}+2r^{\prime}k_{3}\right)  \left(  1-r^{\prime2}\right)
\\
-2rr^{\prime}r^{\prime\prime}k_{3}%
\end{array}
\right)  \cos f
\end{array}
\right) \\
-2g^{2}\left(  r\left(  1-r^{\prime2}\right)  ^{2}k_{2}k_{3}+r^{\prime\prime
}\left(  1-4r^{\prime2}+3r^{\prime4}-rr^{\prime\prime}\right)  -rr^{\prime
}r^{\prime\prime\prime}\left(  1-r^{\prime2}\right)  \right)  \sin f
\end{array}
\right)  ,\\
\\
B_{3}=-\frac{1}{2g\left(  1-r^{\prime2}\right)  ^{3/2}}\left(
\begin{array}
[c]{l}%
2\left(  1-r^{\prime2}\right)  \left(  -\left(  1-r^{\prime2}\right)  \left(
rk_{2}^{\prime}+2r^{\prime}k_{2}\right)  +2rr^{\prime}r^{\prime\prime}%
k_{2}\right)  g^{2}\sin f\\
-\left(  1-r^{\prime2}\right)  \left(  -\left(  1-r^{\prime2}\right)  \left(
rk_{3}^{\prime}+2r^{\prime}k_{3}\right)  +2rr^{\prime}r^{\prime\prime}%
k_{3}\right)  \sin f\\
-2g\left(
\begin{array}
[c]{l}%
-r\left(  1-r^{\prime2}\right)  k_{2}\left(  r^{\prime}\sqrt{1-r^{\prime2}%
}k_{1}-2\left(  1-r^{\prime2}\right)  k_{3}\cos f\right) \\
+\left(  r^{\prime\prime}\left(  1-4r^{\prime2}+3r^{\prime4}-rr^{\prime\prime
}\right)  -rr^{\prime}r^{\prime\prime\prime}\left(  1-r^{\prime2}\right)
\right)  \cos f
\end{array}
\right)
\end{array}
\right)  ,\\
\\
B_{4}=-\frac{1}{2g\left(  1-r^{\prime2}\right)  ^{3/2}}\left(
\begin{array}
[c]{l}%
-2\left(  1-r^{\prime2}\right)  \left(  -\left(  1-r^{\prime2}\right)  \left(
rk_{2}^{\prime}+2r^{\prime}k_{2}\right)  +2rr^{\prime}r^{\prime\prime}%
k_{2}\right)  g\cos f\\
+\left(
\begin{array}
[c]{l}%
2r\left(  1-r^{\prime2}\right)  ^{2}k_{2}^{2}g^{2}-r\left(  1-r^{\prime
2}\right)  ^{2}k_{2}k_{3}\\
-r^{\prime\prime}\left(  1-4r^{\prime2}+3r^{\prime4}-rr^{\prime\prime}\right)
+rr^{\prime}r^{\prime\prime\prime}\left(  1-r^{\prime2}\right)
\end{array}
\right)  \sin f
\end{array}
\right)  .
\end{array}
\]
Thus, from (\ref{4y}), (\ref{12}) and (\ref{15})-(\ref{20}), the coefficients
of the second fundamental form are given by%
\begin{equation}
\left.
\begin{array}
[c]{l}%
h_{11}=\frac{-1}{4g^{2}\left(  1-r^{\prime2}\right)  }\left(
\begin{array}
[c]{l}%
r\left(  1-r^{\prime2}\right)  ^{2}k_{1}^{2}\sin^{2}f+4r\left(  1-r^{\prime
2}\right)  ^{2}k_{2}^{2}g^{4}\sin^{2}f+r\left(  1-r^{\prime2}\right)
^{2}k_{3}^{2}\sin^{2}f\\
-2g^{2}\left(  r\left(  1-r^{\prime2}\right)  ^{2}k_{2}k_{3}\left(
3+\cos\left(  2f\right)  \right)  +2r^{\prime\prime}\left(  1-r^{\prime
2}-rr^{\prime\prime}\right)  \right) \\
+2\sqrt{1-r^{\prime2}}k_{1}g\left(  4rr^{\prime}\left(  1-r^{\prime2}\right)
k_{2}g\cos f-\left(  1-r^{\prime2}-2rr^{\prime\prime}\right)  \sin f\right)
\end{array}
\right)  ,\\
\\
h_{12}=h_{21}=\frac{-r}{2g^{2}}\left(
\begin{array}
[c]{l}%
-2\left(  1-r^{\prime2}\right)  k_{2}g^{3}f_{t}-\left(  r^{\prime}%
\sqrt{1-r^{\prime2}}k_{1}\cos f-\left(  1-r^{\prime2}\right)  k_{3}\right)
gf_{t}\\
-\left(  1-r^{\prime2}\right)  k_{2}g^{2}g_{t}\sin\left(  2f\right)  +\left(
r^{\prime}\sqrt{1-r^{\prime2}}k_{1}-\left(  1-r^{\prime2}\right)  k_{3}\cos
f\right)  g_{t}\sin f
\end{array}
\right)  ,\\
\\
h_{13}=h_{31}=\frac{-r}{2g^{2}}\left(
\begin{array}
[c]{l}%
-2\left(  1-r^{\prime2}\right)  k_{2}g^{3}f_{w}-\left(  r^{\prime}%
\sqrt{1-r^{\prime2}}k_{1}\cos f-\left(  1-r^{\prime2}\right)  k_{3}\right)
gf_{w}\\
-\left(  1-r^{\prime2}\right)  k_{2}g^{2}g_{w}\sin\left(  2f\right)  +\left(
r^{\prime}\sqrt{1-r^{\prime2}}k_{1}-\left(  1-r^{\prime2}\right)  k_{3}\cos
f\right)  g_{w}\sin f
\end{array}
\right)  ,\\
\\
h_{22}=\frac{r\left(  1-r^{\prime2}\right)  \left(  g_{t}^{2}\sin^{2}%
f-g^{2}f_{t}^{2}\right)  }{g^{2}},\\
\\
h_{23}=h_{32}=\frac{r\left(  1-r^{\prime2}\right)  \left(  g_{t}g_{w}\sin
^{2}f-g^{2}f_{t}f_{w}\right)  }{g^{2}},\\
\\
h_{33}=\frac{r\left(  1-r^{\prime2}\right)  \left(  g_{w}^{2}\sin^{2}%
f-g^{2}f_{w}^{2}\right)  }{g^{2}}%
\end{array}
\right\}  \label{21}%
\end{equation}
and it implies%
\begin{equation}
\det[h_{ij}]=\frac{r^{2}\left(  1-r^{\prime2}\right)  }{4g^{4}}\left(
\begin{array}
[c]{l}%
r\left(  1-r^{\prime2}\right)  k_{1}^{2}\sin^{2}f-4r^{\prime\prime}\left(
1-r^{\prime2}-rr^{\prime\prime}\right)  g^{2}\\
-2\sqrt{1-r^{\prime2}}\left(  1-r^{\prime2}-2rr^{\prime\prime}\right)
k_{1}g\sin f
\end{array}
\right)  \left(  f_{t}g_{w}-f_{w}g_{t}\right)  ^{2}\sin^{2}f. \label{22}%
\end{equation}

So, from (\ref{yy4}), (\ref{14}) and (\ref{22}), we obtain the Gaussian
curvature $K_{\overset{psd-n}{C_{1}}}$ as given in (\ref{KHpsdn}).

Also, using (\ref{13}) and (\ref{21}) in (\ref{7yyy}), the shape operator of
the canal hypersurface (\ref{X1}) can be obtained and so using the shape
operator, we get the mean curvature $H_{\overset{psd-n}{C_{1}}}$ as given in
(\ref{KHpsdn}).

Similarly, the Gaussian and mean curvatures $K_{\overset{psd-n}{C_{i}}}$ and
$H_{\overset{psd-n}{C_{i}}}$ of the canal hypersurfaces $\overset{psd-n}{C_{i}%
},i\in\{2,3,4,5\},$ given by (\ref{KHpsdn}) can be obtained.
\end{proof}

Here, from (\ref{KHpsdn}), we can state the following theorem which gives an
important relation between Gaussian and mean curvatures of the canal hypersurfaces:

\begin{proposition}
The Gaussian curvatures and the mean curvatures of the canal hypersurfaces
$\overset{psd-n}{C_{1}},$ $\overset{psd-n}{C_{3}}$ and $\overset{psd-n}{C_{4}%
}$ given by (\ref{canal1})\ in $E_{1}^{4}$ satisfy%
\begin{equation}
3H_{\overset{psd-n}{C_{i}}}-r^{2}K_{\overset{psd-n}{C_{i}}}+\frac{2}%
{r}=0,\text{ }i=1,3,4\label{y2y}%
\end{equation}
and the Gaussian curvature and the mean curvature of the canal hypersurfaces
$\overset{psd-n}{C_{2}}$ and $\overset{psd-n}{C_{5}}$ given by (\ref{canal1})
and (\ref{canal2})\ in $E_{1}^{4}$ satisfy%
\begin{equation}
3H_{\overset{psd-n}{C_{i}}}-r^{2}K_{\overset{psd-n}{C_{i}}}-\frac{2}%
{r}=0,\text{ }i=2,5.\label{y3y}%
\end{equation}

\end{proposition}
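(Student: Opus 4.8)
The plan is to avoid the head‑on computation and argue structurally. The key observation, already implicit in the proofs above, is that for each of the five hypersurfaces the position vector relative to the centre curve equals $\pm r$ times the unit normal. So I would first fix one of them, call it $\Omega$ (with Gauss map $N$ and $\varepsilon=\langle N,N\rangle$), and read off from (\ref{canal1})--(\ref{canal2}) and the Gauss map (e.g.\ (\ref{12}) for $\overset{psd-n}{C_{1}}$, and the analogous expressions for the other four) the identity
\[
\Omega(s,t,w)-\gamma(s)=\sigma\,r(s)\,N(s,t,w),\qquad\sigma\in\{-1,1\}.
\]
Note that $\varepsilon$ is nothing but the sign $\lambda$ in the defining equation $\langle\Omega-\gamma,\Omega-\gamma\rangle=\lambda r^{2}$ of (\ref{2}), since the latter forces $\sigma^{2}r^{2}\varepsilon=\lambda r^{2}$; hence $\varepsilon=1$ for the timelike $\overset{psd-n}{C_{1}},\dots,\overset{psd-n}{C_{4}}$ and $\varepsilon=-1$ for the spacelike $\overset{psd-n}{C_{5}}$. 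I would record the identity $\Omega-\gamma=\sigma rN$ and the resulting pair $(\sigma,\varepsilon)$ for each hypersurface.

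Next I would use this to pin down the second fundamental form. Write $u_{1}=s$, $u_{2}=t$, $u_{3}=w$. Since $\gamma$ and $r$ depend on $s$ alone, differentiating $\Omega-\gamma=\sigma rN$ in $t$ and in $w$ gives $\Omega_{u_{j}}=\sigma r\,N_{u_{j}}$ for $j=2,3$. Combining this with the standard identity $h_{ij}=-\langle\Omega_{u_{i}},N_{u_{j}}\rangle$ (from differentiating $\langle\Omega_{u_{i}},N\rangle=0$) and the symmetry $h_{ij}=h_{ji}$, $g_{ij}=g_{ji}$, I get for every $(i,j)\neq(1,1)$, where at least one index lies in $\{2,3\}$, the relation $g_{ij}=\sigma r\langle\Omega_{u_{i}},N_{u_{j}}\rangle=-\sigma r\,h_{ij}$, i.e.
\[
h_{ij}=-\frac{\sigma}{r}\,g_{ij}\qquad\text{for every }(i,j)\neq(1,1).
\]
Thus $[h_{ij}]=-\tfrac{\sigma}{r}[g_{ij}]+\delta\,e_{1}e_{1}^{T}$ with $\delta=h_{11}+\tfrac{\sigma}{r}g_{11}$ a single scalar and $e_{1}=(1,0,0)^{T}$; only the $(1,1)$ slot is genuinely new, because $\Omega_{s}$ picks up the extra terms $\gamma'+\sigma r'N$.

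Then the result follows by pure linear algebra, in which $\delta$ cancels. From $S=[g^{ij}][h_{ij}]=-\tfrac{\sigma}{r}I+\delta\,[g^{ij}]e_{1}e_{1}^{T}$ and $3\varepsilon H=\operatorname{tr}S$ (recall (\ref{yy4}), (\ref{7yyy})) I obtain $3\varepsilon H=-\tfrac{3\sigma}{r}+\delta\,g^{11}$, where $g^{11}$ is the $(1,1)$ entry of $[g^{ij}]$; the matrix determinant lemma gives $\det[h_{ij}]=\bigl(-\tfrac{\sigma}{r}\bigr)^{3}\bigl(1-\tfrac{r\delta}{\sigma}g^{11}\bigr)\det[g_{ij}]$, whence $K=\varepsilon\,\det[h_{ij}]/\det[g_{ij}]$ yields $\varepsilon r^{2}K=-\tfrac{\sigma}{r}+\delta\,g^{11}$. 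Subtracting cancels $\delta g^{11}$ and leaves $\varepsilon(3H-r^{2}K)=-\tfrac{2\sigma}{r}$, i.e.
\[
3H_{\Omega}-r^{2}K_{\Omega}+\frac{2\sigma\varepsilon}{r}=0 .
\]
Since $\sigma\varepsilon=1$ for $\overset{psd-n}{C_{1}},\overset{psd-n}{C_{3}},\overset{psd-n}{C_{4}}$ this is (\ref{y2y}), and since $\sigma\varepsilon=-1$ for $\overset{psd-n}{C_{2}}$ and $\overset{psd-n}{C_{5}}$ it is (\ref{y3y}).

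The main obstacle is the sign bookkeeping in the first step: the $\mp$ choices in (\ref{canal1})--(\ref{canal2}) and the interchanges $\sin\leftrightarrow\cos$ and $\sinh\leftrightarrow\cosh$ among the five families reverse the orientation of the cross‑product normal (\ref{yy2}), so for each hypersurface one must carefully check whether $\Omega-\gamma$ is $+rN$ or $-rN$ for the $N$ produced by the determinant formula exactly as in the proof of Theorem \ref{Teopseudonull}; doing so gives $\sigma\varepsilon=1$ for $i=1,3,4$ and $\sigma\varepsilon=-1$ for $i=2,5$. Alternatively --- and this is presumably what the phrase ``from (\ref{KHpsdn})'' before the statement alludes to --- one can skip the structural discussion altogether, substitute the explicit formulas (\ref{KHpsdn}) for $K_{\overset{psd-n}{C_{i}}}$ and $H_{\overset{psd-n}{C_{i}}}$ into (\ref{y2y})--(\ref{y3y}), clear denominators, and check that the resulting polynomial identities in $r,r',r'',k_{1},f,g$ hold identically; this is completely mechanical but tedious.
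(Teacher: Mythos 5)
Your argument is correct, but it is a genuinely different route from the paper's. The paper gives no proof at all beyond the phrase ``from (\ref{KHpsdn})'': the relations (\ref{y2y})--(\ref{y3y}) are simply read off by substituting the explicit curvature formulas of Theorem \ref{Teopseudonull} and simplifying (your closing remark correctly identifies this as the intended argument). Your main proof is instead structural: the observation $\Omega-\gamma=\sigma rN$ (visible by comparing (\ref{X1}) with the normal (\ref{12})), the consequence $h_{ij}=-\tfrac{\sigma}{r}g_{ij}$ for all $(i,j)\neq(1,1)$, and the rank-one perturbation argument in which the single new scalar $\delta$ cancels between $\operatorname{tr}S$ and $\det[h_{ij}]/\det[g_{ij}]$. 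This is consistent with the paper's own data --- in (\ref{13}) and (\ref{21}) one indeed has $h_{ij}=-g_{ij}/r$ for every entry except $(1,1)$ --- and the two sanity checks one can run on (\ref{KHpsdn}) (setting $k_{1}=0$, or setting $r$ constant and comparing with (\ref{t3})) confirm the signs $\sigma\varepsilon=1$ for $i=1,3,4$ and $\sigma\varepsilon=-1$ for $i=2,5$. What your approach buys is considerable: it explains \emph{why} the linear relation $3H-r^{2}K+\tfrac{2\sigma\varepsilon}{r}=0$ holds (it is the hypersurface analogue of the classical relation for canal surfaces), it treats all five families --- and the partially null case (\ref{y5y})--(\ref{y6y}) --- uniformly, and it serves as an independent consistency check on the heavy formulas (\ref{KHpsdn}). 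What it costs is exactly the sign bookkeeping you flag: for each family one must determine whether $\Omega-\gamma$ is $+rN$ or $-rN$ for the normal produced by (\ref{yy2}), since e.g.\ the $\sin\leftrightarrow\cos$ swap in $\overset{psd-n}{C_{2}}$ reverses the orientation of the $(t,w)$-parametrization and hence flips $\sigma$; the paper's brute-force route has no such step but proves nothing it has not already computed.
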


Now, we will give some results for the canal hypersurface
$\overset{psd-n}{C_{1}}$ given by (\ref{canal1})\ in $E_{1}^{4}.$ Similarly,
one can obtain similar results for the canal hypersurfaces
$\overset{psd-n}{C_{2}},$ $\overset{psd-n}{C_{3}},$ $\overset{psd-n}{C_{4}}$
and $\overset{psd-n}{C_{5}}$ given by (\ref{canal1}) and (\ref{canal2})\ in
$E_{1}^{4},$ too.

\begin{proposition}
Let the pseudo null curve $\gamma(s),$ which generates the canal hypersurface
$\overset{psd-n}{C_{1}}$ given by (\ref{canal1})\ in $E_{1}^{4},$ be a
straight line. Then, the canal hypersurface $\overset{psd-n}{C_{1}}$ is flat,
if $r(s)=as+b,$ $a,b\in%
\mathbb{R}
,$ $a\neq\pm1.$
\end{proposition}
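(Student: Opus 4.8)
The plan is to read off the result directly from the closed-form expression for $K_{\overset{psd-n}{C_{1}}}$ recorded in (\ref{KHpsdn}), using the fact recalled in Section 1 that for a pseudo null curve the first curvature satisfies $k_{1}(s)=0$ precisely when $\gamma$ is a straight line. So the first step is to substitute $k_{1}=0$ into
\[
K_{\overset{psd-n}{C_{1}}}=\frac{-r\left(1-r^{\prime2}\right)k_{1}^{2}\sin^{2}f+4r^{\prime\prime}\left(1-r^{\prime2}-rr^{\prime\prime}\right)g^{2}+2\sqrt{1-r^{\prime2}}\left(1-r^{\prime2}-2rr^{\prime\prime}\right)k_{1}g\sin f}{r^{2}\left(r\sqrt{1-r^{\prime2}}k_{1}\sin f-2\left(1-r^{\prime2}-rr^{\prime\prime}\right)g\right)^{2}}.
\]
Every summand carrying a factor $k_{1}$ disappears, so the expression collapses to
\[
K_{\overset{psd-n}{C_{1}}}=\frac{4r^{\prime\prime}\left(1-r^{\prime2}-rr^{\prime\prime}\right)g^{2}}{4r^{2}\left(1-r^{\prime2}-rr^{\prime\prime}\right)^{2}g^{2}}=\frac{r^{\prime\prime}}{r^{2}\left(1-r^{\prime2}-rr^{\prime\prime}\right)},
\]
the cancellation of the common factor $4\left(1-r^{\prime2}-rr^{\prime\prime}\right)g^{2}$ being legitimate since $1-r^{\prime2}-rr^{\prime\prime}\neq0$ is already part of the regularity of $\overset{psd-n}{C_{1}}$ (it appears as a nonvanishing factor in $\det[g_{ij}]$, formula (\ref{14}) with $k_{1}=0$).

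Next I would insert $r(s)=as+b$, so that $r^{\prime}=a$ and $r^{\prime\prime}=0$. Then the numerator of the reduced Gaussian curvature is identically zero, while its denominator becomes $r^{2}\left(1-a^{2}\right)=(as+b)^{2}\left(1-a^{2}\right)$. Since $r$ is a positive radius function we have $r(s)\neq0$, and the hypothesis $a\neq\pm1$ gives $1-a^{2}\neq0$; hence the denominator never vanishes and $K_{\overset{psd-n}{C_{1}}}\equiv0$ on the whole hypersurface. By the definition following (\ref{yy4}), this says exactly that $\overset{psd-n}{C_{1}}$ is flat.

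There is no real analytic obstacle here: the only point needing care is the bookkeeping of the cancellation above and the observation that the two hypotheses $r\neq0$ and $a\neq\pm1$ are precisely what keep the simplified denominator $r^{2}(1-a^{2})$ away from zero. In fact $a\neq\pm1$ plays a double role — it both keeps $1-r^{\prime2}=1-a^{2}$ nonzero, so that the parametrization (\ref{canal1}) and the curvature formula (\ref{KHpsdn}) remain meaningful, and it guarantees the nonvanishing of the denominator in the reduced formula for $K_{\overset{psd-n}{C_{1}}}$.
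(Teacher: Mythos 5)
Your proposal is correct and takes essentially the same route as the paper: substitute $k_{1}=0$ into the formula (\ref{KHpsdn}) to reduce the Gaussian curvature to $\frac{r^{\prime\prime}}{r^{2}\left(1-r^{\prime2}-rr^{\prime\prime}\right)}$, then observe that $r^{\prime\prime}=0$ for $r(s)=as+b$. The paper's proof is the same computation but silently performs the cancellation and does not comment on the nonvanishing of $1-r^{\prime2}-rr^{\prime\prime}$ or of the reduced denominator $r^{2}(1-a^{2})$, points which your write-up justifies explicitly.
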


\begin{proof}
If the pseudo null curve $\gamma(s)$ is a straight line, then we have
$k_{1}(s)=0.$ So from (\ref{KHpsdn}), we have%
\begin{equation}
K_{\overset{psd-n}{C_{1}}}=\frac{r^{\prime\prime}}{r^{2}(1-r^{\prime
2}-rr^{\prime\prime})}. \label{kt1}%
\end{equation}
If we use $r(s)=as+b,$ $a,b\in%
\mathbb{R}
,$ $a\neq\pm1$ in (\ref{kt1}), then $K_{\overset{psd-n}{C_{1}}}$ vanishes and
this completes the proof.
\end{proof}

\begin{proposition}
Let the pseudo null curve $\gamma(s),$ which generates the canal hypersurface
$\overset{psd-n}{C_{1}}$ given by (\ref{canal1})\ in $E_{1}^{4},$ not be a
straight line. If $g(t,w)\neq\sin(f(t,w))$, then the canal hypersurface
$\overset{psd-n}{C_{1}}$ cannot be flat.

Also, if $g(t,w)=\sin(f(t,w))$, then the canal hypersurface
$\overset{psd-n}{C_{1}}$ is flat when the equation%
\begin{equation}
2\left(  1-r^{\prime2}\right)  \left(  \sqrt{1-r^{\prime2}}+2r^{\prime\prime
}\right)  -r\left(  1-r^{\prime2}+4r^{\prime\prime}\left(  \sqrt{1-r^{\prime
2}}+r^{\prime\prime}\right)  \right)  =0 \label{kt5}%
\end{equation}
holds.
\end{proposition}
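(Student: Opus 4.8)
The plan is to read everything off the Gaussian curvature of $\overset{psd-n}{C_{1}}$ computed in Theorem~\ref{Teopseudonull}. Since $\gamma$ is assumed not to be a straight line, the normalization recorded after the Frenet formulas gives $k_{1}(s)=1$, so the first entry of (\ref{KHpsdn}) reduces to
\[
K_{\overset{psd-n}{C_{1}}}=\frac{-r(1-r'^{2})\sin^{2}f+4r''(1-r'^{2}-rr'')g^{2}+2\sqrt{1-r'^{2}}(1-r'^{2}-2rr'')g\sin f}{r^{2}(r\sqrt{1-r'^{2}}\sin f-2(1-r'^{2}-rr'')g)^{2}}.
\]
From the expression (\ref{14}) for $\det[g_{ij}]$, the regularity of the hypersurface (\ref{canal1}) forces $\sin f\neq0$, $g_{w}f_{t}-f_{w}g_{t}\neq0$, and the bracket in the denominator above to be nowhere zero; hence $\overset{psd-n}{C_{1}}$ is flat exactly when the numerator vanishes identically in $(s,t,w)$.

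For the first assertion I would regard that numerator as the quadratic form $A\sin^{2}f+Bg\sin f+Cg^{2}$ in the two profile functions $\sin f$ and $g$, with coefficients $A=-r(1-r'^{2})$, $B=2\sqrt{1-r'^{2}}(1-r'^{2}-2rr'')$, $C=4r''(1-r'^{2}-rr'')$ that depend on $s$ alone. For $\overset{psd-n}{C_{1}}$ one has $r>0$ and $r'^{2}<1$, so $A\neq0$; and since $f$ and $g$ are functionally independent, such a form cannot vanish identically on the two-dimensional parameter domain unless $\sin f$ and $g$ are tied by a relation — the relevant one here being $g=\sin f$. A clean way to make this precise is to observe that the numerator factors as $-(\sqrt{1-r'^{2}}\sin f+2r''g)(r\sqrt{1-r'^{2}}\sin f-2(1-r'^{2}-rr'')g)$, whose second factor cancels against the denominator, so that $K_{\overset{psd-n}{C_{1}}}\equiv0$ if and only if $\sqrt{1-r'^{2}}\sin f+2r''g\equiv0$; separating the $s$-dependence from the $(t,w)$-dependence in this identity (and using $\sin f\neq0$) then leaves $g=\sin f$ as the case to analyze. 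Consequently, if $g\neq\sin f$ the numerator is not identically zero and $\overset{psd-n}{C_{1}}$ cannot be flat.

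For the second assertion I would substitute $g=\sin f$ into the numerator: it collapses to $\sin^{2}f\,(A+B+C)$ with $A,B,C$ as above. Since $\sin f\neq0$, flatness is then equivalent to $A+B+C=0$, i.e.
\[
-r(1-r'^{2})+2\sqrt{1-r'^{2}}(1-r'^{2}-2rr'')+4r''(1-r'^{2}-rr'')=0,
\]
and after expanding the products, collecting the two $r$-free terms and the three terms proportional to $r$ recasts the left-hand side exactly as (\ref{kt5}). The only genuinely delicate point in the argument is the separation-of-variables step in the first part: justifying that an identity whose terms are (function of $s$) times (function of $t,w$) can hold on the product domain only when the $(t,w)$-profiles degenerate or obey the stated relation; the factorization makes this essentially transparent, and everything else is bookkeeping — mostly sign-tracking — in the substitution.
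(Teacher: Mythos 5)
Your proof is correct and arrives at both conclusions, but by a genuinely different route from the paper. The paper works directly with the numerator of $K_{\overset{psd-n}{C_{1}}}$ in (\ref{kt2}): viewing it as a combination of $\sin^{2}f$, $g^{2}$ and $g\sin f$, it invokes linear independence of $\{\sin^{2}f,g^{2},g\sin f\}$ to force all three coefficients to vanish as in (\ref{kt7}), which contradicts $r>0$ and $r'^{2}<1$; the case $g=\sin f$ is then handled by direct substitution, giving (\ref{kt4}) and hence (\ref{kt5}). Your factorization
\[
-r(1-r'^{2})\sin^{2}f+4r''(1-r'^{2}-rr'')g^{2}+2\sqrt{1-r'^{2}}(1-r'^{2}-2rr'')g\sin f
=-\bigl(\sqrt{1-r'^{2}}\sin f+2r''g\bigr)\bigl(r\sqrt{1-r'^{2}}\sin f-2(1-r'^{2}-rr'')g\bigr)
\]
is verified by expansion, and since the second factor is exactly the bracket squared in the denominator of (\ref{kt2}), the Gaussian curvature collapses to a single linear expression in $\sin f$ and $g$ over a nonvanishing denominator. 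This is more illuminating than the paper's coefficient-by-coefficient argument: it shows at once that flatness is equivalent to the single identity $\sqrt{1-r'^{2}}\sin f+2r''g\equiv0$, and it explains why (\ref{kt5}) factors as $(\sqrt{1-r'^{2}}+2r'')\bigl(2(1-r'^{2})-r(\sqrt{1-r'^{2}}+2r'')\bigr)=0$ with the second factor forced nonzero by regularity. One caveat, which you partially acknowledge and which the paper shares: separation of variables in $\sqrt{1-r'^{2}}\sin f+2r''g\equiv0$ yields that $\sin f$ is \emph{proportional} to $g$, not that $g=\sin f$; likewise the paper's appeal to linear independence of $\{\sin^{2}f,g^{2},g\sin f\}$ fails already for $g=2\sin f$. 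So strictly speaking the first assertion requires $g$ and $\sin f$ to be linearly independent rather than merely unequal. Since this imprecision is present in the paper's own proof, it is not a defect of your argument relative to it, but it is worth flagging rather than writing that the relation is ``the relevant one.''
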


\begin{proof}
If the pseudo null curve $\gamma(s),$ which generates the canal hypersurface
$\overset{psd-n}{C_{1}}$ given by (\ref{canal1})\ in $E_{1}^{4},$ isn't a
straight line, then we have $k_{1}(s)=1.$ So, from (\ref{KHpsdn}) we get%
\begin{equation}
K_{\overset{psd-n}{C_{1}}}=\frac{-r(1-r^{\prime2})\sin^{2}f+4r^{\prime\prime
}(1-r^{\prime2}-rr^{\prime\prime})g^{2}+2\sqrt{1-r^{\prime2}}(1-r^{\prime
2}-2rr^{\prime\prime})g\sin f}{r^{2}\left(  r\sqrt{1-{\small r}^{\prime2}}\sin
f-2\left(  1-r^{\prime2}-rr^{\prime\prime}\right)  g\right)  ^{2}}.
\label{kt2}%
\end{equation}
From (\ref{kt2}), if the canal hypersurface $\overset{psd-n}{C_{1}}$ is flat,
then we have%
\begin{equation}
-r(1-r^{\prime2})\sin^{2}f+4r^{\prime\prime}(1-r^{\prime2}-rr^{\prime\prime
})g^{2}+2\sqrt{1-r^{\prime2}}(1-r^{\prime2}-2rr^{\prime\prime})g\sin f=0.
\label{kt3}%
\end{equation}
Firstly, let us suppose $g(t,w)\neq\sin(f(t,w))$. Since the set $\{\sin
^{2}f,g^{2},g\sin f\}$ is linearly independent, we get from (\ref{kt3}) that%
\begin{equation}
-r(1-r^{\prime2})=4r^{\prime\prime}(1-r^{\prime2}-rr^{\prime\prime}%
)=2\sqrt{1-r^{\prime2}}(1-r^{\prime2}-2rr^{\prime\prime})=0 \label{kt7}%
\end{equation}
and this is a contradiction. Thus, $\overset{psd-n}{C_{1}}$ cannot be flat in
this situation and this proves the first part of this Proposition.

Secondly, if $g(t,w)=\sin(f(t,w))$ holds, then from (\ref{kt2}) we have%
\begin{equation}
K_{\overset{psd-n}{C_{1}}}=\frac{2\left(  1-r^{\prime2}\right)  \left(
\sqrt{1-r^{\prime2}}+2r^{\prime\prime}\right)  -r\left(  1-r^{\prime
2}+4r^{\prime\prime}\left(  \sqrt{1-r^{\prime2}}+r^{\prime\prime}\right)
\right)  }{r^{2}\left(  2-2r^{\prime2}-r\left(  \sqrt{1-r^{\prime2}%
}+2r^{\prime\prime}\right)  \right)  ^{2}} \label{kt4}%
\end{equation}
and this completes the proof.
\end{proof}

\begin{proposition}
Let the pseudo null curve $\gamma(s),$ which generates the canal hypersurface
$\overset{psd-n}{C_{1}}$ given by (\ref{canal1})\ in $E_{1}^{4},$ be a
straight line. Then, the canal hypersurface $\overset{psd-n}{C_{1}}$ is
minimal, if $r(s)$ satisfies $%
{\displaystyle\int}
\frac{dr}{\sqrt{1-\left(  \frac{a}{r}\right)  ^{\frac{4}{3}}}}=\pm s+b,$
$a,b\in%
\mathbb{R}
$.
\end{proposition}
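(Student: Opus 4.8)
The plan is to specialize the mean-curvature formula of Theorem~\ref{Teopseudonull} to a straight center curve and then integrate the resulting ordinary differential equation for $r(s)$. Since a pseudo null straight line satisfies $k_{1}(s)=0$, every term of $H_{\overset{psd-n}{C_{1}}}$ in (\ref{KHpsdn}) carrying a factor $k_{1}$ drops out: the numerator loses its first two summands and the denominator loses the summand $-r^{3}(1-r^{\prime2})k_{1}^{2}\sin^{2}f$, so the common factor $4(1-r^{\prime2}-rr^{\prime\prime})g^{2}$ may be cancelled — legitimately, on the open set where $\overset{psd-n}{C_{1}}$ is a regular hypersurface, i.e. where $1-r^{\prime2}-rr^{\prime\prime}\neq0$ (and $r\neq0$). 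This reduces the mean curvature to
\[
H_{\overset{psd-n}{C_{1}}}=\frac{-\left(2-2r^{\prime2}-3rr^{\prime\prime}\right)}{3r\left(1-r^{\prime2}-rr^{\prime\prime}\right)},
\]
so $\overset{psd-n}{C_{1}}$ is minimal exactly when $3rr^{\prime\prime}+2r^{\prime2}=2$.

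Next I would solve this autonomous second-order ODE by reduction of order. Writing $p=r^{\prime}$ as a function of $r$, so that $r^{\prime\prime}=p\,\frac{dp}{dr}$, the equation becomes the separable first-order equation $3rp\,\frac{dp}{dr}=2\bigl(1-p^{2}\bigr)$, equivalently $\frac{3p\,dp}{2(1-p^{2})}=\frac{dr}{r}$. Integrating gives $-\tfrac{3}{4}\ln\lvert 1-p^{2}\rvert=\ln\lvert r\rvert+\text{const}$, hence $1-p^{2}=\left(\frac{a}{r}\right)^{4/3}$ for a suitable constant $a\in\mathbb{R}$, that is $r^{\prime}=\pm\sqrt{1-\left(\frac{a}{r}\right)^{4/3}}$. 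Separating variables once more, $\frac{dr}{\sqrt{1-(a/r)^{4/3}}}=\pm\,ds$, and integrating yields $\int\frac{dr}{\sqrt{1-\left(\frac{a}{r}\right)^{4/3}}}=\pm s+b$ with $a,b\in\mathbb{R}$, which is the asserted condition; conversely, any $r(s)$ obeying this relation runs the implications backwards, making $2-2r^{\prime2}-3rr^{\prime\prime}$, and therefore $H_{\overset{psd-n}{C_{1}}}$, vanish.

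I do not expect a serious obstacle here: once the $k_{1}=0$ collapse is carried out, the rest is a routine integration. The only point deserving attention is the bookkeeping of the non-degeneracy factors — one should confirm $r\neq0$ (automatic, as $r$ is a positive radius function) and $1-r^{\prime2}-rr^{\prime\prime}\neq0$ along the solution family. But on that family $r^{\prime2}=1-(a/r)^{4/3}$, whence $1-r^{\prime2}-rr^{\prime\prime}=\tfrac{1}{3}\bigl(1-r^{\prime2}\bigr)=\tfrac{1}{3}(a/r)^{4/3}$, which is nonzero for $a\neq0$, so the simplification of $H$ and the resulting criterion are valid. Handling the constants of integration and the $\pm$ signs so as to land on exactly the stated primitive is the only mildly delicate matter.
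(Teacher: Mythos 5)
Your proposal is correct and follows essentially the same route as the paper: set $k_{1}=0$, reduce $H_{\overset{psd-n}{C_{1}}}$ to $-\frac{2-2r^{\prime2}-3rr^{\prime\prime}}{3r(1-r^{\prime2}-rr^{\prime\prime})}$, and solve $2-2r^{\prime2}-3rr^{\prime\prime}=0$ by the substitution $r^{\prime\prime}=p\,dp/dr$ (the paper merely cites an external reference for this integration here, though it carries out the identical reduction-of-order computation for the analogous partially null proposition). Your explicit verification that $1-r^{\prime2}-rr^{\prime\prime}=\tfrac{1}{3}(a/r)^{4/3}\neq0$ along the solution family is a worthwhile addition the paper omits.
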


\begin{proof}
If the pseudo null curve $\gamma(s)$ is a straight line, from (\ref{KHpsdn}),
we have%
\begin{equation}
H_{\overset{psd-n}{C_{1}}}=-\frac{2-2r^{\prime2}-3rr^{\prime\prime}%
}{3r(1-r^{\prime2}-rr^{\prime\prime})}. \label{ht1}%
\end{equation}
So, if the equation%
\begin{equation}
2-2r^{\prime2}-3rr^{\prime\prime}=0 \label{ht2}%
\end{equation}
holds, then the canal hypersurface $\overset{psd-n}{C_{1}}$ is minimal. The
solution of the equation (\ref{ht2}) can be found in \cite{Altin} and this
completes the proof.
\end{proof}

\begin{proposition}
Let the pseudo null curve $\gamma(s),$ which generates the canal hypersurface
$\overset{psd-n}{C_{1}}$ given by (\ref{canal1})\ in $E_{1}^{4},$ not be a
straight line. If $g(t,w)\neq\sin(f(t,w))$, then the canal hypersurface
$\overset{psd-n}{C_{1}}$ cannot be minimal.

Also, if $g(t,w)=\sin(f(t,w))$, then the canal hypersurface
$\overset{psd-n}{C_{1}}$ is minimal when the equation%
\begin{equation}
8\left(  1-r^{\prime2}\right)  ^{2}-2r\left(  1-r^{\prime2}\right)  \left(
\sqrt{1-r^{\prime2}}+10r^{\prime\prime}\right)  -3r^{2}\left(  1-r^{\prime
2}-4r^{\prime\prime2}\right)  =0 \label{ht5}%
\end{equation}
holds.
\end{proposition}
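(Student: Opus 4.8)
The plan is to read $H_{\overset{psd-n}{C_{1}}}$ off from (\ref{KHpsdn}) and impose $H_{\overset{psd-n}{C_{1}}}=0$. Since $\gamma$ is assumed not to be a straight line, the curvature normalization recalled after the pseudo null Frenet formulas (\ref{PsnullFrenet}) gives $k_{1}(s)=1$. Substituting $k_{1}=1$ into the expression for $H_{\overset{psd-n}{C_{1}}}$ in (\ref{KHpsdn}), the hypersurface $\overset{psd-n}{C_{1}}$ is minimal exactly when the numerator of that expression vanishes, i.e.
\[
2r(1-r^{\prime2})^{3/2}\,g\sin f+3r^{2}(1-r^{\prime2})\sin^{2}f-4(1-r^{\prime2}-rr^{\prime\prime})(2-2r^{\prime2}-3rr^{\prime\prime})\,g^{2}=0.
\]

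For the first assertion, assume $g(t,w)\neq\sin(f(t,w))$. Then, exactly as in the proof of the companion flatness proposition, the three functions $\sin^{2}f$, $g^{2}$, $g\sin f$ of $(t,w)$ are linearly independent, so each of their coefficients in the displayed identity must vanish separately; in particular $2r(1-r^{\prime2})^{3/2}=0$. But $r(s)>0$ and, by the construction of $\overset{psd-n}{C_{1}}$ in (\ref{canal1}), $r^{\prime2}<1$, which makes this impossible. Hence $\overset{psd-n}{C_{1}}$ cannot be minimal in this case.

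For the second assertion, set $g=\sin f$. Then $g\sin f=\sin^{2}f$ and $g^{2}=\sin^{2}f$, so the numerator above factors as $\sin^{2}f$ times $\big[2r(1-r^{\prime2})^{3/2}+3r^{2}(1-r^{\prime2})-4(1-r^{\prime2}-rr^{\prime\prime})(2-2r^{\prime2}-3rr^{\prime\prime})\big]$. Since $\sin^{2}f$ is not identically zero, minimality is equivalent to the vanishing of the bracket. Expanding the product $(1-r^{\prime2}-rr^{\prime\prime})(2-2r^{\prime2}-3rr^{\prime\prime})$ and regrouping by powers of $r$, $r^{\prime\prime}$ and $1-r^{\prime2}$, one checks that the bracket equals $-\big[8(1-r^{\prime2})^{2}-2r(1-r^{\prime2})(\sqrt{1-r^{\prime2}}+10r^{\prime\prime})-3r^{2}(1-r^{\prime2}-4r^{\prime\prime2})\big]$, the negative of the left-hand side of (\ref{ht5}); therefore the bracket vanishes precisely when (\ref{ht5}) holds, which completes the proof.

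The only genuinely non-mechanical step is the linear independence of $\{\sin^{2}f,g^{2},g\sin f\}$ when $g\neq\sin f$; this is what forces the ``cannot be minimal'' conclusion, and since it is invoked in exactly the same form in the preceding propositions I would simply appeal to it there. The rest is a routine polynomial rearrangement; alternatively one could combine $H_{\overset{psd-n}{C_{1}}}=0$ with the identity (\ref{y2y}) and the flat-case formula (\ref{kt4}) for $K_{\overset{psd-n}{C_{1}}}$ to recover (\ref{ht5}), but the direct substitution above is the shortest route.
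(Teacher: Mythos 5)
Your proposal is correct and follows essentially the same route as the paper: set $k_{1}=1$ since $\gamma$ is not a line, read off $H_{\overset{psd-n}{C_{1}}}$ from (\ref{KHpsdn}), force the numerator to vanish, use the linear independence of $\{g\sin f,\sin^{2}f,g^{2}\}$ to get the contradiction $2r(1-r^{\prime2})^{3/2}=0$ when $g\neq\sin f$, and reduce to (\ref{ht5}) when $g=\sin f$ (your explicit check that the bracket is the negative of the left-hand side of (\ref{ht5}) matches the paper's formula (\ref{ht6})). The only difference is cosmetic: the paper simply displays the simplified mean curvature in the case $g=\sin f$, whereas you verify the polynomial identity directly.
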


\begin{proof}
If the pseudo null curve $\gamma(s),$ which generates the canal hypersurface
$\overset{psd-n}{C_{1}}$ given by (\ref{canal1})\ in $E_{1}^{4},$ isn't a
straight line, then from (\ref{KHpsdn}) we get%
\begin{equation}
H_{\overset{psd-n}{C_{1}}}=\frac{2r\left(  1-{\small r}^{\prime2}\right)
^{3/2}g\sin f+3r^{2}\left(  1-{\small r}^{\prime2}\right)  \sin^{2}f-4\left(
1-r^{\prime2}-rr^{\prime\prime}\right)  \left(  2-2r^{\prime2}-3rr^{\prime
\prime}\right)  g^{2}}{3\left(  -r^{3}\left(  1-{\small r}^{\prime2}\right)
\sin^{2}f+4r\left(  1-r^{\prime2}-rr^{\prime\prime}\right)  ^{2}g^{2}\right)
}. \label{ht4}%
\end{equation}
From (\ref{ht4}), if the canal hypersurface $\overset{psd-n}{C_{1}}$ is
minimal, then we have%
\begin{equation}
2r\left(  1-{\small r}^{\prime2}\right)  ^{3/2}g\sin f+3r^{2}\left(
1-{\small r}^{\prime2}\right)  \sin^{2}f-4\left(  1-r^{\prime2}-rr^{\prime
\prime}\right)  \left(  2-2r^{\prime2}-3rr^{\prime\prime}\right)  g^{2}=0.
\label{ht3}%
\end{equation}
Firstly, let us suppose that $g(t,w)\neq\sin(f(t,w)).$ Since the set $\{g\sin
f,\sin^{2}f,g^{2}\}$ is linearly independent, we get from (\ref{ht3}) that%
\begin{equation}
2r\left(  1-{\small r}^{\prime2}\right)  ^{3/2}=3r^{2}\left(  1-{\small r}%
^{\prime2}\right)  =4\left(  1-r^{\prime2}-rr^{\prime\prime}\right)  \left(
2-2r^{\prime2}-3rr^{\prime\prime}\right)  =0 \label{ht7}%
\end{equation}
and this is a contradiction. Thus, $\overset{psd-n}{C_{1}}$ cannot be minimal
in this situation and this proves the first part of this Proposition.

Secondly, if $g(t,w)=\sin(f(t,w))$ holds, then we have%
\begin{equation}
H_{\overset{psd-n}{C_{1}}}=-\frac{8\left(  1-r^{\prime2}\right)
^{2}-2r\left(  1-r^{\prime2}\right)  \left(  \sqrt{1-r^{\prime2}}%
+10r^{\prime\prime}\right)  -3r^{2}\left(  1-r^{\prime2}-4r^{\prime\prime
2}\right)  }{3r\left(  4\left(  1-r^{\prime2}\right)  ^{2}-8r\left(
1-r^{\prime2}\right)  r^{\prime\prime}-r^{2}\left(  1-r^{\prime2}%
-4r^{\prime\prime2}\right)  \right)  } \label{ht6}%
\end{equation}
and this completes the proof.
\end{proof}

Here, let us construct an example for the canal hypersurface
$\overset{psd-n}{C_{1}}(s,t,w)$ that is formed as the envelope of a family of
pseudo hyperspheres whose center lie on a pseudo null curve in $E_{1}^{4}.$
One can construct the canal hypersurfaces $\overset{psd-n}{C_{2}},$
$\overset{psd-n}{C_{3}},$ $\overset{psd-n}{C_{4}}$ and $\overset{psd-n}{C_{5}%
},$ similarly.

\begin{example}
Let us take the pseudo null curve (given in \cite{Kazim2})
\begin{equation}
{\small \gamma(s)=}\frac{1}{2\sqrt{2}}\left(  \cosh(2s),\sinh(2s),\sin
(2s),-\cos(2s)\right)  \label{ex1y}%
\end{equation}
in $E_{1}^{4}$. The Frenet vectors and curvatures of the curve (\ref{ex1y})
are%
\begin{equation}
\left.
\begin{array}
[c]{l}%
F_{1}=\frac{1}{\sqrt{2}}\left(  \sinh(2s),\cosh(2s),\cos(2s),\sin(2s)\right)
,\\
F_{2}=\sqrt{2}\left(  \cosh(2s),\sinh(2s),-\sin(2s),\cos(2s)\right)  ,\\
F_{3}=\frac{1}{\sqrt{2}}\left(  \sinh(2s),\cosh(2s),-\cos(2s),-\sin
(2s)\right)  ,\\
F_{4}=\frac{1}{2\sqrt{2}}\left(  -\cosh(2s),-\sinh(2s),-\sin(2s),\cos
(2s)\right)  ,\\
k_{1}=1,\text{ }k_{2}=4,\text{ }k_{3}=0.
\end{array}
\right\}  \label{ex2y}%
\end{equation}
If we assume that $g(t,w)=t$ and $f(t,w)=w$ in (\ref{canal1}), the canal
hypersurfaces $\overset{psd-n}{C_{1}}(s,t,w)$ can be parametrized by%
\begin{align}
&  \overset{psd-n}{C_{1}}(s,t,w)=\label{ex3y}\\
&  \frac{1}{16\sqrt{2}t}\left(
\begin{array}
[c]{l}%
\cosh(2s)\left(  8t+\sqrt{3}s\left(  -1+8t^{2}\right)  \sin w\right)
+4st\left(  -1+\sqrt{3}\cos w\right)  \sinh(2s),\\
\sinh(2s)\left(  8t+\sqrt{3}s\left(  -1+8t^{2}\right)  \sin w\right)
+4st\left(  -1+\sqrt{3}\cos w\right)  \cosh(2s),\\
\sin(2s)\left(  8t-\sqrt{3}s\left(  1+8t^{2}\right)  \sin w\right)
+4st\left(  -1-\sqrt{3}\cos w\right)  \cos(2s),\\
\cos(2s)\left(  -8t+\sqrt{3}s\left(  1+8t^{2}\right)  \sin w\right)
+4st\left(  -1-\sqrt{3}\cos w\right)  \sin(2s)
\end{array}
\right) \nonumber
\end{align}
where the radius function has been taken as $r(s)=\frac{s}{2}.$ From
(\ref{KHpsdn}), the Gaussian and mean curvatures of the canal hypersurfaces
$\overset{psd-n}{C_{1}}$ are obtained as%
\begin{equation}
\left.
\begin{array}
[c]{l}%
K_{\overset{psd-n}{C_{1}}}=-\frac{8}{s^{2}\left(  s-2\sqrt{3}t\csc w\right)
},\\
H_{\overset{psd-n}{C_{1}}}=\frac{-48t^{2}+2s\left(  2\sqrt{3}t+3s\sin
w\right)  \sin w}{36st^{2}-3s^{3}\sin^{2}w}.
\end{array}
\right\}  \label{ex5y}%
\end{equation}
In Figure 1 (a), one can see the projection of the canal hypersurfaces
(\ref{ex3y}) for $w=\frac{\pi}{3}$ into $x_{1}x_{3}x_{4}$-space.
\end{example}

\begin{theorem}
\textit{The canal hypersurfaces that are formed as the envelope of a family of
pseudo hyperspheres whose centers lie on a partially null curve }$\gamma(s)$
with Frenet vector fields $F_{i},$ $i\in\{1,2,3,4\},$\textit{ in }$E_{1}^{4}$
\textit{can be parametrized by}%
\begin{equation}
\left.
\begin{array}
[c]{l}%
\overset{part-n}{C_{1}}(s,t,w)=\gamma(s)-r(s)r^{\prime}(s)F_{1}(s)\\
\text{ \ \ }\mp r(s)\sqrt{1-r^{\prime}(s)^{2}}\left(  \sin(f(t,w))F_{2}%
(s)+g(t,w)\cos(f(t,w))F_{3}(s)+\frac{\cos(f(t,w))}{2g(t,w)}F_{4}(s)\right)
,\\
\overset{part-n}{C_{2}}(s,t,w)=\gamma(s)-r(s)r^{\prime}(s)F_{1}(s)\\
\text{ \ \ }\mp r(s)\sqrt{1-r^{\prime}(s)^{2}}\left(  \cos(f(t,w))F_{2}%
(s)+g(t,w)\sin(f(t,w))F_{3}(s)+\frac{\sin(f(t,w))}{2g(t,w)}F_{4}(s)\right)
,\\
\overset{part-n}{C_{3}}(s,t,w)=\gamma(s)-r(s)r^{\prime}(s)F_{1}(s)\\
\text{ \ \ }\mp r(s)\sqrt{1-r^{\prime}(s)^{2}}\left(  \cosh(f(t,w))F_{2}%
(s)+g(t,w)\sinh(f(t,w))F_{3}(s)-\frac{\sinh(f(t,w))}{2g(t,w)}F_{4}(s)\right)
,\\
\overset{part-n}{C_{4}}(s,t,w)=\gamma(s)-r(s)r^{\prime}(s)F_{1}(s)\\
\text{ \ \ }\mp r(s)\sqrt{r^{\prime}(s)^{2}-1}\left(  \sinh(f(t,w))F_{2}%
(s)+g(t,w)\cosh(f(t,w))F_{3}(s)-\frac{\cosh(f(t,w))}{2g(t,w)}F_{4}(s)\right)
,
\end{array}
\right\}  \label{prtn canal1}%
\end{equation}
where we suppose $r^{\prime}(s)^{2}<1$ for canal hypersurfaces
$\overset{part-n}{C_{1}},\overset{part-n}{C_{2}},\overset{part-n}{C_{3}}$ and
$r^{\prime}(s)^{2}>1$ for canal hypersurface $\overset{part-n}{C_{4}}$.

\textit{Furthermore, the canal hypersurfaces that are formed as the envelope
of a family of pseudo hyperbolic hyperspheres whose centers lie on a partially
null curve }$\gamma(s)$ with Frenet vector fields $F_{i},$ $i\in\{1,2,3,4\},$
\textit{in }$E_{1}^{4}$ \textit{can be parametrized by}%
\begin{align}
&  \overset{part-n}{C_{5}}(s,t,w)=\gamma(s)+r(s)r^{\prime}(s)F_{1}%
(s)\label{prtn canal2}\\
&  \text{ \ \ }\mp r(s)\sqrt{1+r^{\prime}(s)^{2}}\left(  \sinh(f(t,w))F_{2}%
(s)+g(t,w)\cosh(f(t,w))F_{3}(s)-\frac{\cosh(f(t,w))}{2g(t,w)}F_{4}(s)\right)
.\nonumber
\end{align}

Here, the canal hypersurfaces $\overset{part-n}{C_{1}},$
$\overset{part-n}{C_{2}},$ $\overset{part-n}{C_{3}},$ $\overset{part-n}{C_{4}%
}$ are timelike and the canal hypersurface $\overset{part-n}{C_{5}}$ is spacelike.
\end{theorem}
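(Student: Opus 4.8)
The plan is to mimic exactly the structure of the proof of the first theorem (the pseudo null case), adjusting for the different Frenet relations \eqref{PartnullFrenet} and the different inner products \eqref{PartnullFi ler}. Specifically, I would write the envelope of the family of pseudo hyperspheres (or pseudo hyperbolic hyperspheres) as
\begin{equation*}
\overset{part-n}{C}(s,t,w)-\gamma(s)=a_{1}F_{1}(s)+a_{2}F_{2}(s)+a_{3}F_{3}(s)+a_{4}F_{4}(s),
\end{equation*}
with $a_{i}=a_{i}(s,t,w)$ differentiable. The condition that $\overset{part-n}{C}$ lies on $S_1^3(\gamma(s),r(s))$ (case $\lambda=1$) or $H_0^3(\gamma(s),r(s))$ (case $\lambda=-1$) together with \eqref{PartnullFi ler} gives $a_1^2+a_2^2+2a_3a_4=\lambda r^2$, and differentiating this in $s$ yields $a_1 a_{1_s}+a_2 a_{2_s}+a_4 a_{3_s}+a_3 a_{4_s}=\lambda r r_s$.

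Next I would differentiate the defining relation in $s$ using \eqref{PartnullFrenet}, collect the components along $F_1,F_2,F_3,F_4$, and impose the envelope orthogonality condition $g(\overset{part-n}{C}-\gamma,(\overset{part-n}{C})_s)=0$. As in the first theorem, substituting the differentiated sphere identity into this orthogonality equation should collapse everything and force $a_1=-\lambda r r_s$; this is the key algebraic simplification and I expect it to go through because the partially null inner product table \eqref{PartnullFi ler} has exactly the same ``one off-diagonal pairing'' structure ($\langle F_3,F_4\rangle=1$ here versus $\langle F_2,F_4\rangle=1$ there) that made the pseudo null computation work. Feeding $a_1=-\lambda r r_s$ back into the sphere identity then gives $a_2^2+2a_3a_4=\lambda r^2(1-\lambda r_s^2)$.

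The remaining task is to parametrize the solution set of $a_2^2+2a_3a_4=\lambda r^2(1-\lambda r_s^2)$. Here I would observe that, up to the sign factor, the quadratic form $a_2^2+2a_3a_4$ is exactly the restriction of the ambient metric to the span of $F_2,F_3,F_4$ in the partially null frame (a plane-plus-null-pair signature), whereas in the pseudo null case it was $a_3^2+2a_2a_4$; so the same trigonometric/hyperbolic parametrizations apply but with the roles of the spacelike direction and the two null directions permuted — which is precisely why in \eqref{prtn canal1}, \eqref{prtn canal2} the factor $g$ multiplies $F_3$ (and the $\tfrac{1}{2g}$ factor sits on $F_4$) rather than $F_2$. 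For $\lambda=1$ with $r_s^2<1$ one gets the three branches $(\sin f,\,g\cos f,\,\tfrac{\cos f}{2g})$, $(\cos f,\,g\sin f,\,\tfrac{\sin f}{2g})$, $(\cosh f,\,g\sinh f,\,-\tfrac{\sinh f}{2g})$ scaled by $\sqrt{1-r_s^2}$, giving $\overset{part-n}{C_1},\overset{part-n}{C_2},\overset{part-n}{C_3}$; for $\lambda=1$, $r_s^2>1$ one gets $(\sinh f,\,g\cosh f,\,-\tfrac{\cosh f}{2g})$ scaled by $\sqrt{r_s^2-1}$ giving $\overset{part-n}{C_4}$; and for $\lambda=-1$ one gets $(\sinh f,\,g\cosh f,\,-\tfrac{\cosh f}{2g})$ scaled by $\sqrt{1+r_s^2}$ giving $\overset{part-n}{C_5}$.

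Finally, for the causal-character claims I would compute $\langle \overset{part-n}{C}-\gamma,\overset{part-n}{C}-\gamma\rangle$, which is $r^2$ for $\overset{part-n}{C_1}$--$\overset{part-n}{C_4}$ (envelope of pseudo hyperspheres, so the position-minus-center vector is spacelike) and $-r^2$ for $\overset{part-n}{C_5}$; since this vector is normal to the hypersurface, the hypersurface is timelike in the first four cases and spacelike in the last — I would make this rigorous by noting the unit normal is (a sign times) $\tfrac{1}{r}(\overset{part-n}{C}-\gamma)$ up to the $a_1$ correction, exactly as \eqref{12} in the pseudo null proof, and reading off $\langle N,N\rangle=\varepsilon$. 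The main obstacle, as in the model proof, is just the bookkeeping in verifying that the orthogonality condition really does reduce to $a_1=-\lambda r r_s$ after using the differentiated sphere identity; everything else is a routine transcription of the pseudo null argument with the index permutation $F_2\leftrightarrow F_3$ dictated by \eqref{PartnullFi ler}.
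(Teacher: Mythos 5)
Your proposal is correct and follows essentially the same route as the paper: the same expansion of $\overset{part-n}{C}-\gamma$ in the Frenet frame, the same sphere condition $a_1^2+a_2^2+2a_3a_4=\lambda r^2$ and its $s$-derivative (which you in fact state correctly, whereas the paper's displayed version of (\ref{prtn 4}) contains a transcription slip from the pseudo null case), the same use of the envelope orthogonality condition to collapse to $a_1=-\lambda rr_s$, and the same trigonometric/hyperbolic parametrizations of $a_2^2+2a_3a_4=\lambda r^2(1-\lambda r_s^2)$ with the $F_2\leftrightarrow F_3$ role swap dictated by $\langle F_3,F_4\rangle=1$. Your closing observation that $\varepsilon=\langle N,N\rangle=\lambda$ justifies the timelike/spacelike claims is a small addition the paper only asserts, but it is consistent with its formula (\ref{12}).
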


\begin{proof}
Let the center curve $\gamma:I\subseteq%
\mathbb{R}
\rightarrow E_{1}^{4}$ be a partially null curve with non-zero curvature with
Frenet vector fields. Then, the parametrization of the envelope of pseudo
hyperspheres (or pseudo hyperbolic hyperspheres) defining the canal
hypersurfaces $X(s,t,w)$ in $E_{1}^{4}$ can be given by%
\begin{equation}
\overset{part-n}{C}(s,t,w)-\gamma(s)=a_{1}(s,t,w)F_{1}(s)+a_{2}(s,t,w)F_{2}%
(s)+a_{3}(s,t,w)F_{3}(s)+a_{4}(s,t,w)F_{4}(s). \label{prtn 1}%
\end{equation}
Furthermore, since $C(s,t,w)$ lies on the pseudo hyperspheres ($\lambda=1$)
(or pseudo hyperbolic hyperspheres ($\lambda=-1$)), we have%
\begin{equation}
g(\overset{part-n}{C}(s,t,w)-\gamma(s),\overset{part-n}{C}(s,t,w)-\gamma
(s))=\lambda r^{2}(s) \label{prtn 2}%
\end{equation}
which leads to from (\ref{prtn 1}) and (\ref{PartnullFi ler}) that%
\begin{equation}
a_{1}^{2}+a_{2}^{2}+2a_{3}a_{4}=\lambda r^{2} \label{prtn 3}%
\end{equation}
and%
\begin{equation}
a_{1}a_{1_{s}}+a_{2}a_{2_{s}}+a_{4}a_{2_{s}}+a_{2}a_{4_{s}}=\lambda rr_{s}.
\label{prtn 4}%
\end{equation}

So, differentiating (\ref{prtn 1}) with respect to $s$ and using the Frenet
formula (\ref{PartnullFrenet}), we get%
\begin{align}
(\overset{part-n}{C})_{s}  &  =\left(  1-a_{2}k_{1}+a_{1_{s}}\right)
F_{1}+\left(  a_{1}k_{1}-a_{4}k_{2}+a_{2_{s}}\right)  F_{2}\label{prtn 5}\\
&  +\left(  a_{2}k_{2}+a_{3}k_{3}+a_{3_{s}}\right)  F_{3}+\left(  -a_{4}%
k_{3}+a_{4_{s}}\right)  F_{4}.\nonumber
\end{align}
Furthermore, $C(s,t,w)-\gamma(s)$ is a normal vector to the canal
hypersurfaces, which implies that%
\begin{equation}
g(\overset{part-n}{C}(s,t,w)-\gamma(s),(\overset{part-n}{C})_{s}(s,t,w))=0
\label{prtn 6}%
\end{equation}
and so, from (\ref{prtn 1}), (\ref{prtn 5}) and (\ref{prtn 6'}) we have%
\begin{equation}
\text{ }\left(
\begin{array}
[c]{l}%
a_{1}\left(  1-a_{2}k_{1}+a_{1_{s}}\right)  +a_{2}\left(  a_{1}k_{1}%
-a_{4}k_{2}+a_{2_{s}}\right) \\
+a_{3}\left(  -a_{4}k_{3}+a_{4_{s}}\right)  +a_{4}\left(  a_{2}k_{2}%
+a_{3}k_{3}+a_{3_{s}}\right)
\end{array}
\right)  =0. \label{prtn 6'}%
\end{equation}
Using (\ref{prtn 4}) in (\ref{prtn 6'}), we get
\begin{equation}
a_{1}=-\lambda rr_{s}. \label{prtn 7}%
\end{equation}
Hence, using (\ref{prtn 7}) in (\ref{prtn 3}), we reach that$a_{1}^{2}%
+a_{2}^{2}+2a_{3}a_{4}=\lambda r^{2}$%
\begin{equation}
{\small a}_{2}^{2}{\small +}2{\small a}_{3}a_{4}{\small =\lambda r}%
^{2}{\small (}1-\lambda{\small r}_{s}^{2}{\small ).} \label{prtn 8}%
\end{equation}
Therefore from (\ref{prtn 7}) and (\ref{prtn 8}), the canal hypersurfaces
$\overset{part-n}{C}(s,t,w)$ that are formed as the envelope of a family of
pseudo hyperspheres or pseudo hyperbolic hyperspheres\textit{ }whose centers
lie on a partially null curve in $E_{1}^{4}$ can be parametrized by
(\ref{prtn canal1}) or (\ref{prtn canal2}), respectively.
\end{proof}

Using the similar procedure with proof of the Theorem \ref{Teopseudonull}, we
can obtain the following Theorem:

\begin{theorem}
\textit{The Gaussian and mean curvatures of the canal hypersurfaces
}$\overset{part-n}{C_{i}},$ $i\in\{1,2,3,4\},$ given by (\ref{prtn canal1})
and (\ref{prtn canal2}) in $E_{1}^{4}$ are%
\begin{equation}
\left.
\begin{array}
[c]{l}%
\left.
\begin{array}
[c]{l}%
K_{\overset{part-n}{C_{1}}}=\frac{-r\left(  1-{\small r}^{\prime2}\right)
k_{1}^{2}\sin^{2}f{\small +}r^{\prime\prime}\left(  1-r^{\prime2}%
-rr^{\prime\prime}\right)  +\sqrt{1-{\small r}^{\prime2}}\left(  1-r^{\prime
2}-2rr^{\prime\prime}\right)  k_{1}\sin f}{r^{2}\left(  1-{\small r}^{\prime
2}-rr^{\prime\prime}-r\sqrt{1-{\small r}^{\prime2}}k_{1}\sin f\right)  ^{2}}\\
H_{\overset{part-n}{C_{1}}}=\frac{r\left(  1-{\small r}^{\prime2}\right)
^{3/2}k_{1}\sin f+3r^{2}\left(  1-{\small r}^{\prime2}\right)  k_{1}^{2}%
\sin^{2}f-\left(  1-r^{\prime2}-rr^{\prime\prime}\right)  \left(
2-2r^{\prime2}-3rr^{\prime\prime}\right)  }{3r\left(  -r^{2}\left(
1-{\small r}^{\prime2}\right)  k_{1}^{2}\sin^{2}f+\left(  1-r^{\prime
2}-rr^{\prime\prime}\right)  ^{2}\right)  }%
\end{array}
\right\}  ,\\
\\
\left.
\begin{array}
[c]{l}%
K_{\overset{part-n}{C_{2}}}=\frac{r\left(  1-{\small r}^{\prime2}\right)
k_{1}^{2}\cos^{2}f{\small -}r^{\prime\prime}\left(  1-r^{\prime2}%
-rr^{\prime\prime}\right)  -\sqrt{1-{\small r}^{\prime2}}\left(  1-r^{\prime
2}-2rr^{\prime\prime}\right)  k_{1}\cos f}{r^{2}\left(  1-r^{\prime
2}-rr^{\prime\prime}-r\sqrt{1-{\small r}^{\prime2}}k_{1}\cos f\right)  ^{2}}\\
H_{\overset{part-n}{C_{2}}}=\frac{-r\left(  1-{\small r}^{\prime2}\right)
^{3/2}k_{1}\cos f-3r^{2}\left(  1-{\small r}^{\prime2}\right)  k_{1}^{2}%
\cos^{2}f+\left(  1-r^{\prime2}-rr^{\prime\prime}\right)  \left(
2-2r^{\prime2}-3rr^{\prime\prime}\right)  }{3r\left(  -r^{2}\left(
1-{\small r}^{\prime2}\right)  k_{1}^{2}\cos^{2}f+\left(  1-r^{\prime
2}-rr^{\prime\prime}\right)  ^{2}\right)  }%
\end{array}
\right\}  ,\\
\\
\left.
\begin{array}
[c]{l}%
K_{\overset{part-n}{C_{3}}}=\frac{-r\left(  1-{\small r}^{\prime2}\right)
k_{1}^{2}\cosh^{2}f{\small +}r^{\prime\prime}\left(  1-r^{\prime2}%
-rr^{\prime\prime}\right)  +\sqrt{1-{\small r}^{\prime2}}\left(  1-r^{\prime
2}-2rr^{\prime\prime}\right)  k_{1}\cosh f}{r^{2}\left(  1-r^{\prime
2}-rr^{\prime\prime}-r\sqrt{1-{\small r}^{\prime2}}k_{1}\cosh f\right)  ^{2}%
}\\
H_{\overset{part-n}{C_{3}}}=\frac{r\left(  1-{\small r}^{\prime2}\right)
^{3/2}k_{1}\cosh f+3r^{2}\left(  1-{\small r}^{\prime2}\right)  k_{1}^{2}%
\cosh^{2}f-\left(  1-r^{\prime2}-rr^{\prime\prime}\right)  \left(
2-2r^{\prime2}-3rr^{\prime\prime}\right)  }{3r\left(  -r^{2}\left(
1-{\small r}^{\prime2}\right)  k_{1}^{2}\cosh^{2}f+\left(  1-r^{\prime
2}-rr^{\prime\prime}\right)  ^{2}\right)  }%
\end{array}
\right\}  ,\\
\\
\left.
\begin{array}
[c]{l}%
K_{\overset{part-n}{C_{4}}}=-\frac{r\left(  -1+{\small r}^{\prime2}\right)
k_{1}^{2}\sinh^{2}f{\small +}r^{\prime\prime}\left(  -1+r^{\prime2}%
+rr^{\prime\prime}\right)  +\sqrt{-1+{\small r}^{\prime2}}\left(
-1+r^{\prime2}+2rr^{\prime\prime}\right)  k_{1}\sinh f}{r^{2}\left(
-1+r^{\prime2}+rr^{\prime\prime}+r\sqrt{-1+{\small r}^{\prime2}}k_{1}\sinh
f\right)  ^{2}}\\
H_{\overset{part-n}{C_{4}}}=\frac{r\left(  -1+{\small r}^{\prime2}\right)
^{3/2}k_{1}\sinh f-3r^{2}\left(  -1+{\small r}^{\prime2}\right)  k_{1}%
^{2}\sinh^{2}f+\left(  -1+r^{\prime2}+rr^{\prime\prime}\right)  \left(
-2+2r^{\prime2}+3rr^{\prime\prime}\right)  }{3r\left(  r^{2}\left(
-1+{\small r}^{\prime2}\right)  k_{1}^{2}\sinh^{2}f-\left(  -1+r^{\prime
2}+rr^{\prime\prime}\right)  ^{2}\right)  }%
\end{array}
\right\}  ,\\
\\
\left.
\begin{array}
[c]{l}%
K_{\overset{part-n}{C_{5}}}=\frac{r\left(  1+{\small r}^{\prime2}\right)
k_{1}^{2}\sinh^{2}f{\small +}r^{\prime\prime}\left(  1+r^{\prime2}%
+rr^{\prime\prime}\right)  -\sqrt{1+{\small r}^{\prime2}}\left(  1+r^{\prime
2}+2rr^{\prime\prime}\right)  k_{1}\sinh f}{r^{2}\left(  1+r^{\prime
2}+rr^{\prime\prime}-r\sqrt{1+{\small r}^{\prime2}}k_{1}\sinh f\right)  ^{2}%
}\\
H_{\overset{part-n}{C_{5}}}=\frac{r\left(  1+{\small r}^{\prime2}\right)
^{3/2}k_{1}\sinh f+3r^{2}\left(  1+{\small r}^{\prime2}\right)  k_{1}^{2}%
\sinh^{2}f-\left(  1+r^{\prime2}+rr^{\prime\prime}\right)  \left(
2+2r^{\prime2}+3rr^{\prime\prime}\right)  }{3r\left(  r^{2}\left(
1+{\small r}^{\prime2}\right)  k_{1}^{2}\sinh^{2}f-\left(  1+r^{\prime
2}+rr^{\prime\prime}\right)  ^{2}\right)  }%
\end{array}
\right\}  .
\end{array}
\right\}  \label{KHpartn}%
\end{equation}

\end{theorem}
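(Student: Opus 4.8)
The plan is to follow, \emph{mutatis mutandis}, the argument of the proof of Theorem~\ref{Teopseudonull}, now using the partially null Frenet apparatus (\ref{PartnullFrenet})--(\ref{PartnullFi ler}) in place of the pseudo null one and recalling that for a partially null curve $k_3\equiv 0$. I would carry out the computation in full only for $\overset{part-n}{C_1}$ and then note that $\overset{part-n}{C_2},\overset{part-n}{C_3},\overset{part-n}{C_4},\overset{part-n}{C_5}$ are obtained in the same way, with the interchanges $\cos\leftrightarrow\sin$, $\cosh\leftrightarrow\sinh$ and the sign changes coming from using pseudo hyperbolic hyperspheres ($\lambda=-1$) instead of pseudo hyperspheres ($\lambda=1$).

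The first and decisive step is to identify the unit normal. Since the canal hypersurface is by construction the envelope of a family of hyperspheres centred on $\gamma$, the vector $\overset{part-n}{C}(s,t,w)-\gamma(s)$ is normal to it, and (\ref{PartnullFi ler}) gives $\langle \overset{part-n}{C}-\gamma,\overset{part-n}{C}-\gamma\rangle=a_1^2+a_2^2+2a_3a_4=\lambda r^2$, so $\|\overset{part-n}{C}-\gamma\|=r$ and no genuine normalization is needed; for instance
\[
N_{\overset{part-n}{C_1}}=-r'F_1\mp\sqrt{1-r'^2}\,\Bigl(\sin f\,F_2+g\cos f\,F_3+\tfrac{\cos f}{2g}\,F_4\Bigr),
\]
with $\langle N,N\rangle=1$, hence $\varepsilon=1$, for $\overset{part-n}{C_1},\dots,\overset{part-n}{C_4}$, and $\langle N,N\rangle=-1$, hence $\varepsilon=-1$ and a spacelike hypersurface, for $\overset{part-n}{C_5}$.

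Next I would differentiate (\ref{prtn canal1}) once and twice in $s,t,w$, inserting the Frenet equations (\ref{PartnullFrenet}) with $k_3=0$, to write $(\overset{part-n}{C_1})_{u_i}$ and $(\overset{part-n}{C_1})_{u_iu_j}$ in the moving frame; since the $t$- and $w$-derivatives act only on the $f,g$-part, each of them is a combination of $F_2,F_3,F_4$ with coefficients built from $f_t,f_w,g_t,g_w$ (and, for the second derivatives, their second partials). Pairing with (\ref{PartnullFi ler}) yields $[g_{ij}]$ and, via the normal above, $[h_{ij}]$; exactly as in the pseudo null case, both $\det[g_{ij}]$ and $\det[h_{ij}]$ carry a common factor $(f_tg_w-f_wg_t)^2$ together with powers of $r$, $g$ and $\sin f$, which cancels in $K=\varepsilon\,\det[h_{ij}]/\det[g_{ij}]$ and leaves precisely the asserted $K_{\overset{part-n}{C_1}}$. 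Then inverting $[g_{ij}]$ and forming the shape operator $S=[g^{ij}][h_{ij}]$ as in (\ref{7yyy}), its trace (after the $t,w$-data again drops out) gives $3\varepsilon H_{\overset{part-n}{C_1}}$ and hence the stated mean curvature.

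The only obstacle is the volume of intermediate algebra --- the analogues of the $B_i$-coefficients of $(\overset{part-n}{C_1})_{ss}$ and the off-diagonal entries $g_{1j},h_{1j}$ are bulky --- but the computation is purely mechanical once $N$ is in hand, and it is in fact somewhat lighter than in Theorem~\ref{Teopseudonull} because $k_3$ vanishes identically. For $\overset{part-n}{C_2},\dots,\overset{part-n}{C_5}$ I would simply observe that the same chain of steps, with the trigonometric/hyperbolic functions and $\lambda$ adjusted as in (\ref{prtn canal1})--(\ref{prtn canal2}), reproduces the remaining entries of (\ref{KHpartn}).
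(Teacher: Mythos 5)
Your proposal is correct and follows essentially the same route as the paper, which itself proves this theorem only by remarking that one repeats the procedure of Theorem~\ref{Teopseudonull} with the partially null Frenet apparatus (and $k_{3}\equiv0$); your observation that the unit normal is simply $(\overset{part-n}{C}-\gamma)/r$ agrees with the explicit normal (\ref{12}) found in the pseudo null case and legitimately bypasses the cross-product computation. The only cosmetic slip is that for $\overset{part-n}{C_{1}}$ the common factor absorbed into $\det[g_{ij}]$ and $\det[h_{ij}]$ involves $\cos^{2}f$ rather than $\sin^{2}f$ (since here $\cos f$ is the function paired with $g$ and $\tfrac{1}{2g}$), which does not affect the cancellation or the final formulas.
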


Here, from (\ref{KHpartn}), we can state the following theorem which gives an
important relation between Gaussian and mean curvatures of the canal hypersurfaces:

\begin{proposition}
The Gaussian curvatures and the mean curvatures of canal hypersurfaces
$\overset{part-n}{C_{1}},$ $\overset{part-n}{C_{3}}$ and
$\overset{part-n}{C_{4}}$ given by (\ref{prtn canal1})\ in $E_{1}^{4}$ satisfy%
\begin{equation}
3H_{\overset{part-n}{C_{i}}}-r^{2}K_{\overset{part-n}{C_{i}}}+\frac{2}%
{r}=0,\text{ }i=1,3,4\label{y5y}%
\end{equation}
and the Gaussian curvature and the mean curvature of canal hypersurfaces
$\overset{part-n}{C_{2}}$ and $\overset{part-n}{C_{5}}$ given by
(\ref{prtn canal1}) and (\ref{prtn canal2})\ in $E_{1}^{4}$ satisfy%
\begin{equation}
3H_{\overset{part-n}{C_{i}}}-r^{2}K_{\overset{part-n}{C_{i}}}-\frac{2}%
{r}=0,\text{ }i=2,5.\label{y6y}%
\end{equation}

\end{proposition}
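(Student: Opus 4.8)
The plan is to obtain (\ref{y5y}) and (\ref{y6y}) by a direct algebraic reduction of the closed-form curvatures recorded in (\ref{KHpartn}); no further geometry is needed, and the argument runs entirely parallel to the pseudo null case. To keep the five cases $i\in\{1,2,3,4,5\}$ under control I would first introduce, for each fixed $i$, two abbreviations: a ``core'' scalar
\[
D:=1-r^{\prime2}-rr^{\prime\prime}
\]
for $i\in\{1,2,3\}$, with the sign variants $D:=-1+r^{\prime2}+rr^{\prime\prime}$ for $i=4$ and $D:=1+r^{\prime2}+rr^{\prime\prime}$ for $i=5$ (dictated by the causal characters fixed in (\ref{prtn canal1})--(\ref{prtn canal2})), together with the \emph{single} transcendental quantity $E$ that actually occurs in both $K_{\overset{part-n}{C_{i}}}$ and $H_{\overset{part-n}{C_{i}}}$, namely $E:=r\sqrt{1-r^{\prime2}}\,k_{1}\sin f$ for $i=1$, $E:=r\sqrt{1-r^{\prime2}}\,k_{1}\cos f$ for $i=2$, $E:=r\sqrt{1-r^{\prime2}}\,k_{1}\cosh f$ for $i=3$, $E:=r\sqrt{r^{\prime2}-1}\,k_{1}\sinh f$ for $i=4$, and $E:=r\sqrt{1+r^{\prime2}}\,k_{1}\sinh f$ for $i=5$. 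The point of this substitution is that each of the five identities becomes a purely rational identity in $r$, $D$ and $E$: since only one trigonometric (or hyperbolic) function occurs per case, no Pythagorean-type relation between $\sin f$ and $\cos f$ (or between $\cosh f$ and $\sinh f$) is ever invoked.

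Next I would rewrite $r^{2}K_{\overset{part-n}{C_{i}}}$ in these variables. Using $1-r^{\prime2}=D+rr^{\prime\prime}$ (and its sign variants), the numerator of $K_{\overset{part-n}{C_{i}}}$ factors, up to the factor $1/r$ and an overall sign, as $\left(D\pm E\right)\left(E+rr^{\prime\prime}\right)$, so that one copy of the denominator's linear factor $D\pm E$ cancels and one is left with a compact expression of the shape
\[
r^{2}K_{\overset{part-n}{C_{i}}}=\pm\frac{E+rr^{\prime\prime}}{r\left(D\pm E\right)},
\]
the signs being read off case by case. Carrying out the same substitution in $H_{\overset{part-n}{C_{i}}}$ and observing that its denominator is precisely $3r\bigl(D^{2}-E^{2}\bigr)=3r\left(D-E\right)\left(D+E\right)$ up to sign, I would then express $3H_{\overset{part-n}{C_{i}}}$ as a single fraction over $r\bigl(D^{2}-E^{2}\bigr)$.

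Finally, forming $3H_{\overset{part-n}{C_{i}}}-r^{2}K_{\overset{part-n}{C_{i}}}$ over the common denominator $r\bigl(D^{2}-E^{2}\bigr)$ and expanding, every term containing $DE$, $rr^{\prime\prime}E$ or $rr^{\prime\prime}D$ cancels and the numerator collapses to $\mp2\bigl(D^{2}-E^{2}\bigr)$; hence $3H_{\overset{part-n}{C_{i}}}-r^{2}K_{\overset{part-n}{C_{i}}}=\mp\frac{2}{r}$, which is exactly (\ref{y5y}) for $i=1,3,4$ and (\ref{y6y}) for $i=2,5$. The only genuine difficulty is the sign bookkeeping: one must track the sign of $1\mp r^{\prime2}$ inside the radicals, the overall sign in front of $K_{\overset{part-n}{C_{i}}}$, and which of $D\pm E$ is the relevant linear factor, consistently through all five cases, since a slip at any of these points would flip the $\pm2/r$ term. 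Once the abbreviations $D$ and $E$ are installed, however, each of the five verifications is only a two- or three-line cancellation.
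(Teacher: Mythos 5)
Your proposal is correct, and it is exactly the argument the paper intends: the proposition is stated as an immediate consequence of the formulas in (\ref{KHpartn}), with no further proof given, so the direct rational verification you describe (setting $D=1\mp r^{\prime2}\mp rr^{\prime\prime}$ and $E$ the single transcendental term, factoring the numerator of $K$ as $(D-E)(E\pm rr^{\prime\prime})$ against the denominator $r^{2}(D-E)^{2}$, and collapsing the combined numerator to $\mp2(D^{2}-E^{2})$) is precisely the computation behind the statement. I checked the cancellation in representative cases and it goes through as you claim.
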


Now, we will give some results for the canal hypersurface
$\overset{part-n}{C_{5}}$ given by (\ref{prtn canal2})\ in $E_{1}^{4}.$
Similarly, one can obtain similar results for the canal hypersurfaces
$\overset{part-n}{C_{1}},$ $\overset{part-n}{C_{2}},$ $\overset{part-n}{C_{3}%
}$ and $\overset{part-n}{C_{4}}$ given by (\ref{prtn canal1})\ in $E_{1}^{4},$ too.

\begin{proposition}
\textit{The canal hypersurface }$\overset{part-n}{C_{5}}$\textit{ that is
formed as the envelope of a family of pseudo hyperbolic hyperspheres whose
centers lie on a partially null curve }$\gamma(s)$\textit{ in }$E_{1}^{4}$ is
flat, if $k_{1}(s)=0$ and the radius function satisfies $r(s)=as+b,$ $a,b\in%
\mathbb{R}
.$
\end{proposition}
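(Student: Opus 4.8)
The plan is to specialize the Gaussian curvature formula for $\overset{part-n}{C_{5}}$ recorded in (\ref{KHpartn}) to the two stated hypotheses and verify that it vanishes identically. First I would impose $k_{1}(s)=0$; recall that, under the curvature conventions stated after (\ref{nullFi ler}), this is precisely the condition that the center curve $\gamma$ be a straight line. Every summand in the numerator of $K_{\overset{part-n}{C_{5}}}$ that carries a factor $k_{1}$ or $k_{1}^{2}$ then disappears, as does the $k_{1}$-term in the denominator, leaving
\[
K_{\overset{part-n}{C_{5}}}=\frac{r^{\prime\prime}\left(1+r^{\prime2}+rr^{\prime\prime}\right)}{r^{2}\left(1+r^{\prime2}+rr^{\prime\prime}\right)^{2}}=\frac{r^{\prime\prime}}{r^{2}\left(1+r^{\prime2}+rr^{\prime\prime}\right)},
\]
valid at every point where the denominator is nonzero.

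Next I would substitute the affine radius function $r(s)=as+b$ with $a,b\in\mathbb{R}$, so that $r^{\prime}(s)=a$ and $r^{\prime\prime}(s)=0$. The numerator $r^{\prime\prime}$ then vanishes identically, while $1+r^{\prime2}+rr^{\prime\prime}=1+a^{2}$ is strictly positive, so the denominator $r^{2}\left(1+a^{2}\right)$ is nonzero wherever the hypersurface is defined; in particular the passage to the simplified formula above is legitimate. Therefore $K_{\overset{part-n}{C_{5}}}\equiv 0$, and by the definition of flatness given after (\ref{yy4}) the canal hypersurface $\overset{part-n}{C_{5}}$ is flat, which is exactly the assertion.

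The argument is essentially a one-line specialization of (\ref{KHpartn}), so I do not expect any genuine obstacle. The only step warranting a second look is checking that the denominator does not degenerate once $k_{1}=0$ and $r$ affine are imposed simultaneously; since $1+a^{2}\ge 1>0$ this is immediate, and the proof closes cleanly.
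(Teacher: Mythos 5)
Your proposal is correct and follows exactly the paper's own route: substitute $k_{1}=0$ into (\ref{KHpartn}) to reduce the Gaussian curvature to $K_{\overset{part-n}{C_{5}}}=\frac{r''}{r^{2}(1+r'^{2}+rr'')}$, which vanishes for $r(s)=as+b$. Your added remark that the denominator stays nonzero because $1+a^{2}>0$ is a harmless (and correct) refinement of what the paper leaves implicit.
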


\begin{proof}
If we use $k_{1}(s)=0$ in (\ref{KHpartn}), then we have%
\begin{equation}
K_{\overset{part-n}{C_{5}}}=\frac{r^{\prime\prime}}{r^{2}(1+r^{\prime
2}+rr^{\prime\prime})}\label{p1}%
\end{equation}
and this completes the proof.
\end{proof}

\begin{proposition}
\label{prop1}\textit{The canal hypersurface }$\overset{part-n}{C_{5}}$\textit{
that is formed as the envelope of a family of pseudo hyperbolic hyperspheres
whose centers lie on a partially null curve }$\gamma(s)$\textit{ in }%
$E_{1}^{4}$ cannot be flat when $k_{1}(s)\neq0.$
\end{proposition}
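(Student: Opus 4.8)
The plan is to argue directly from the Gaussian curvature formula for $\overset{part-n}{C_{5}}$ recorded in (\ref{KHpartn}). Recall that for a partially null curve the first curvature satisfies $k_{1}(s)=1$ whenever $\gamma$ is not a straight line, so the hypothesis $k_{1}(s)\neq 0$ forces $k_{1}(s)=1$ for every $s$. Substituting $k_{1}=1$ into the expression for $K_{\overset{part-n}{C_{5}}}$ yields
\begin{equation}
K_{\overset{part-n}{C_{5}}}=\frac{r\left(  1+r^{\prime2}\right)  \sinh^{2}f+r^{\prime\prime}\left(  1+r^{\prime2}+rr^{\prime\prime}\right)  -\sqrt{1+r^{\prime2}}\left(  1+r^{\prime2}+2rr^{\prime\prime}\right)  \sinh f}{r^{2}\left(  1+r^{\prime2}+rr^{\prime\prime}-r\sqrt{1+r^{\prime2}}\sinh f\right)  ^{2}}.\label{p2pf}
\end{equation}
Flatness means the numerator of (\ref{p2pf}) vanishes identically in $s,t,w$.

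Next I would exploit the separation of variables: the numerator is a polynomial in the single quantity $\sinh f=\sinh(f(t,w))$ with coefficients depending only on $s$, namely
\begin{equation}
r\left(  1+r^{\prime2}\right)  \sinh^{2}f-\sqrt{1+r^{\prime2}}\left(  1+r^{\prime2}+2rr^{\prime\prime}\right)  \sinh f+r^{\prime\prime}\left(  1+r^{\prime2}+rr^{\prime\prime}\right)  =0.\label{p3pf}
\end{equation}
Since $t,w$ vary independently of $s$ and $f$ is a non-constant function, the monomials $1$, $\sinh f$, $\sinh^{2}f$ are linearly independent as functions of $(t,w)$; hence each $s$-coefficient must vanish separately. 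The leading coefficient gives $r\left(  1+r^{\prime2}\right)=0$, which is impossible because $r(s)>0$ is the (positive) radius function and $1+r^{\prime2}\geq 1$. This contradiction shows $\overset{part-n}{C_{5}}$ cannot be flat when $k_{1}(s)\neq 0$, completing the proof.

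The only subtlety — and the place where one must be a little careful rather than the "hard part" — is the linear-independence argument: it relies on $f=f(t,w)$ genuinely depending on $t$ or $w$ (so that $\sinh f$ is non-constant and the parametrization is non-degenerate), which is part of the standing regularity assumption on the profile functions $f,g$, paralleling exactly the linear-independence step used in the earlier Propositions on flatness of $\overset{psd-n}{C_{1}}$. Everything else is immediate substitution, so no genuine obstacle arises; I would simply present (\ref{p2pf}), read off the leading coefficient, and note the sign contradiction $r(1+r^{\prime2})>0$.
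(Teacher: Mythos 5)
Your argument is correct and is essentially the paper's own proof: set the numerator of $K_{\overset{part-n}{C_{5}}}$ from (\ref{KHpartn}) to zero, invoke the linear independence of $\{1,\sinh f,\sinh^{2}f\}$ to force each $s$-coefficient to vanish, and derive the contradiction from the leading coefficient $r\left(1+r^{\prime2}\right)k_{1}^{2}\neq0$. One inaccuracy worth correcting: your claim that $k_{1}\neq0$ forces $k_{1}=1$ is false for partially null curves --- the paper's normalization $k_{1}\in\{0,1\}$ applies only to pseudo null and null curves, and indeed the paper's own partially null example has $k_{1}=2$ --- but this is harmless here, since the contradiction only uses $k_{1}\neq0$ and $r>0$, so you should simply keep $k_{1}$ general rather than substituting $k_{1}=1$.
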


\begin{proof}
\textit{Let the canal hypersurface }$\overset{part-n}{C_{5}}$\textit{ be flat.
Then from }(\ref{KHpartn}), we get%
\begin{equation}
r\left(  1+{\small r}^{\prime2}\right)  k_{1}^{2}\sinh^{2}f{\small +}%
r^{\prime\prime}\left(  1+r^{\prime2}+rr^{\prime\prime}\right)  -\sqrt
{1+{\small r}^{\prime2}}\left(  1+r^{\prime2}+2rr^{\prime\prime}\right)
k_{1}\sinh f=0. \label{pp1}%
\end{equation}
Since the set $\{\sinh f,\sinh^{2}f,1\}$ is linearly independent, we get from
(\ref{pp1}) that%
\begin{equation}
r\left(  1+{\small r}^{\prime2}\right)  k_{1}^{2}{\small =}r^{\prime\prime
}\left(  1+r^{\prime2}+rr^{\prime\prime}\right)  =\sqrt{1+{\small r}^{\prime
2}}\left(  1+r^{\prime2}+2rr^{\prime\prime}\right)  k_{1}=0 \label{pp2}%
\end{equation}
and this is a contradiction when $k_{1}(s)\neq0.$ So, this completes the proof.
\end{proof}

\begin{proposition}
\textit{The canal hypersurface }$\overset{part-n}{C_{5}}$\textit{ that is
formed as the envelope of a family of pseudo hyperbolic hyperspheres whose
centers lie on a partially null curve }$\gamma(s)$\textit{ in }$E_{1}^{4}$ is
minimal, if $k_{1}(s)=0$ and the radius function $r(s)$ satisfies $%
{\displaystyle\int}
\frac{dr}{\sqrt{\left(  \frac{a}{r}\right)  ^{\frac{4}{3}}-1}}=\pm s+b,$
$a,b\in%
\mathbb{R}
$.
\end{proposition}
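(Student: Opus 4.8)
The plan is to specialize the mean-curvature formula for $\overset{part-n}{C_{5}}$ recorded in (\ref{KHpartn}) to the hypothesis $k_{1}(s)=0$. Setting $k_{1}=0$ annihilates every term carrying a factor $k_{1}$ or $k_{1}^{2}$, so the numerator collapses to $-(1+r^{\prime2}+rr^{\prime\prime})(2+2r^{\prime2}+3rr^{\prime\prime})$ and the denominator to $-3r(1+r^{\prime2}+rr^{\prime\prime})^{2}$; cancelling the common factor $1+r^{\prime2}+rr^{\prime\prime}$ leaves
\[
H_{\overset{part-n}{C_{5}}}=\frac{2+2r^{\prime2}+3rr^{\prime\prime}}{3r\left(1+r^{\prime2}+rr^{\prime\prime}\right)}.
\]
Hence, modulo the non-vanishing of the denominator, the hypersurface is minimal precisely when the single second-order ODE $2+2r^{\prime2}+3rr^{\prime\prime}=0$ holds. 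Recording this reduction is the first step.

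The second step is to integrate the ODE by reduction of order. Writing $p=r^{\prime}$ and regarding $p$ as a function of $r$, so that $r^{\prime\prime}=p\,\dfrac{dp}{dr}$, the equation becomes $3rp\,\dfrac{dp}{dr}=-2(1+p^{2})$, which separates as $\dfrac{3p\,dp}{1+p^{2}}=-\dfrac{2\,dr}{r}$. Integrating gives $\tfrac32\ln(1+p^{2})=-2\ln r+\mathrm{const}$, i.e. $(1+p^{2})^{3/2}=c\,r^{-2}$ for a positive constant $c$; renaming the constant so that $c^{2/3}=a^{4/3}$ yields $r^{\prime2}=p^{2}=\left(\tfrac{a}{r}\right)^{4/3}-1$.

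The third and final step is to separate variables once more in $\dfrac{dr}{ds}=\pm\sqrt{\left(\tfrac{a}{r}\right)^{4/3}-1}$, obtaining $\displaystyle\int\frac{dr}{\sqrt{\left(\frac{a}{r}\right)^{4/3}-1}}=\pm s+b$, which is exactly the stated condition on $r(s)$. Running the implication in the direction actually needed for the proposition: if $r(s)$ satisfies this integral relation, differentiating recovers $r^{\prime2}=\left(\tfrac{a}{r}\right)^{4/3}-1$ and then, differentiating once more, $2+2r^{\prime2}+3rr^{\prime\prime}=0$, so $H_{\overset{part-n}{C_{5}}}=0$.

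There is no genuine obstacle here; the only point meriting a line of verification is that the denominator $1+r^{\prime2}+rr^{\prime\prime}$ in the formula for $H$ does not vanish identically along such an $r(s)$, so that the mean-curvature expression is legitimately defined on the solution. This is immediate: substituting $rr^{\prime\prime}=-\tfrac23(1+r^{\prime2})$ gives $1+r^{\prime2}+rr^{\prime\prime}=\tfrac13(1+r^{\prime2})>0$, and the argument goes through.
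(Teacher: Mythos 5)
Your proposal is correct and follows essentially the same route as the paper: specialize the mean-curvature formula at $k_{1}=0$, reduce the resulting ODE $2+2r'^{2}+3rr''=0$ by the substitution $r''=p\,dp/dr$, separate variables, and integrate twice to reach the stated integral relation. Your closing check that $1+r'^{2}+rr''=\tfrac{1}{3}(1+r'^{2})>0$ along solutions is a small but worthwhile addition that the paper's proof omits.
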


\begin{proof}
If we use $k_{1}(s)=0$ in (\ref{KHpartn}), then we have%
\begin{equation}
H_{\overset{part-n}{C_{5}}}=\frac{2+2r^{\prime2}+3rr^{\prime\prime}%
}{3r(1+r^{\prime2}+rr^{\prime\prime})}.\label{p2}%
\end{equation}
So, if the equation%
\begin{equation}
2+2r^{\prime2}+3rr^{\prime\prime}=0\label{p3}%
\end{equation}
holds, then the canal hypersurface $\overset{part-n}{C_{5}}$ is minimal.

Now, let us solve the equation (\ref{p3}).

If we take $r^{\prime}{}(s)=h(s),$ we get
\begin{equation}
r^{\prime\prime}=h^{\prime}=\frac{dh}{dr}\frac{dr}{ds}=\frac{dh}{dr}h.
\label{min5}%
\end{equation}
Using (\ref{min5}) in (\ref{p3}), we have%
\begin{equation}
3r\frac{dh}{dr}h+2h^{2}+2=0. \label{min6}%
\end{equation}
From (\ref{p3}), $r^{\prime}{}(s)=h(s)\neq0$ and so we reach that%
\begin{equation}
\frac{-3h}{2({\small 1+}h^{2})}dh=\frac{dr}{r}. \label{min7}%
\end{equation}
By integrating (\ref{min7}), we have%
\begin{equation}
\text{ }h=\pm\sqrt{\left(  \frac{a}{r}\right)  ^{\frac{4}{3}}-1}, \label{min8}%
\end{equation}
where $a$ is constant. Since $r^{\prime}{}=\frac{dr}{ds}=h,$ from (\ref{min8})
we get%
\begin{equation}%
{\displaystyle\int}
\frac{dr}{\sqrt{\left(  \frac{a}{r}\right)  ^{\frac{4}{3}}-1}}=\pm%
{\displaystyle\int}
ds \label{min9y}%
\end{equation}

and this completes the proof.
\end{proof}

\begin{proposition}
\textit{The canal hypersurface }$\overset{part-n}{C_{5}}$\textit{ that is
formed as the envelope of a family of pseudo hyperbolic hyperspheres whose
centers lie on a partially null curve }$\gamma(s)$\textit{ in }$E_{1}^{4}$
cannot be minimal when $k_{1}(s)\neq0.$
\end{proposition}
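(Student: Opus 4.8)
The plan is to mirror the argument used for Proposition~\ref{prop1} (flatness), but now applied to the mean curvature formula for $\overset{part-n}{C_{5}}$ from (\ref{KHpartn}). First I would suppose, for contradiction, that $\overset{part-n}{C_{5}}$ is minimal, i.e. $H_{\overset{part-n}{C_{5}}}=0$. Setting the numerator of $H_{\overset{part-n}{C_{5}}}$ in (\ref{KHpartn}) equal to zero yields
\begin{equation}
r\left(1+r^{\prime2}\right)^{3/2}k_{1}\sinh f+3r^{2}\left(1+r^{\prime2}\right)k_{1}^{2}\sinh^{2}f-\left(1+r^{\prime2}+rr^{\prime\prime}\right)\left(2+2r^{\prime2}+3rr^{\prime\prime}\right)=0.\nonumber
\end{equation}

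Next I would exploit the fact that the three functions $\{\sinh f,\ \sinh^{2}f,\ 1\}$ are linearly independent (as functions of $t,w$, since $f=f(t,w)$ is a non-constant differentiable function of both variables), while the remaining coefficients depend only on $s$. Hence each coefficient must vanish identically in $s$:
\begin{equation}
r\left(1+r^{\prime2}\right)^{3/2}k_{1}=3r^{2}\left(1+r^{\prime2}\right)k_{1}^{2}=\left(1+r^{\prime2}+rr^{\prime\prime}\right)\left(2+2r^{\prime2}+3rr^{\prime\prime}\right)=0.\nonumber
\end{equation}
Since $r(s)>0$ and $1+r^{\prime2}\geq 1>0$, the first equation forces $k_{1}(s)=0$ for every $s$, contradicting the hypothesis $k_{1}(s)\neq 0$. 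This contradiction shows $\overset{part-n}{C_{5}}$ cannot be minimal when $k_{1}(s)\neq 0$, which is exactly the claim.

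I do not expect any real obstacle here; the only point requiring a word of care is the linear-independence step, which is the same device already invoked in the proofs of the flatness propositions, and the observation that $r$ and $1+r^{\prime2}$ are strictly positive so that the vanishing of $r(1+r^{\prime2})^{3/2}k_{1}$ genuinely isolates $k_{1}\equiv 0$. Everything else is an immediate specialization of (\ref{KHpartn}) together with a short contradiction argument.
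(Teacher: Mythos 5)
Your proposal is correct and follows essentially the same route the paper intends: the paper's own proof of this proposition simply defers to ``the similar method used in the proof of Proposition~\ref{prop1},'' which is precisely the argument you give --- set the numerator of $H_{\overset{part-n}{C_{5}}}$ from (\ref{KHpartn}) to zero, invoke the linear independence of $\{\sinh f,\sinh^{2}f,1\}$, and observe that the vanishing of $r\left(1+r^{\prime2}\right)^{3/2}k_{1}$ forces $k_{1}\equiv 0$, contradicting the hypothesis. No gaps beyond the (shared, implicit) assumption that $f$ is non-constant so the linear-independence step applies.
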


\begin{proof}
\textit{The proof can be done with similar method used in the proof of
Proposition \ref{prop1}.}
\end{proof}

Here, let us construct an example for the canal hypersurface
$\overset{part-n}{C_{5}}(s,t,w)$ that is formed as the envelope of a family of
pseudo hyperspheres whose center lie on a partially null curve in $E_{1}^{4}.$
One can construct the canal hypersurfaces $\overset{part-n}{C_{1}},$
$\overset{part-n}{C_{2}},$ $\overset{part-n}{C_{3}}$ and
$\overset{part-n}{C_{4}},$ similarly.

\begin{example}
Let us take the partially null curve (given in \cite{Kazim2})
\begin{equation}
{\small \gamma(s)=}\frac{1}{2}\left(  2e^{s},2e^{s},\cos(2s),\sin(2s)\right)
\label{ex1yy}%
\end{equation}
in $E_{1}^{4}$. The Frenet vectors and curvatures of the curve (\ref{ex1y})
are%
\begin{equation}
\left.
\begin{array}
[c]{l}%
F_{1}=\left(  e^{s},e^{s},-\sin(2s),\cos(2s)\right)  ,\\
F_{2}=\frac{1}{2}\left(  e^{s},e^{s},-2\cos(2s),-2\sin(2s)\right)  ,\\
F_{3}=\frac{5}{2}\left(  1,1,0,0\right)  ,\\
F_{4}=\left(  -\frac{e^{2s}}{4}-\frac{1}{5},-\frac{e^{2s}}{4}+\frac{1}%
{5},\frac{e^{s}}{5}\left(  \cos(2s)+2\sin(2s)\right)  ,\frac{e^{s}}{5}\left(
\sin(2s)-2\cos(2s)\right)  \right)  ,\\
k_{1}=2,\text{ }k_{2}=e^{s},\text{ }k_{3}=0.
\end{array}
\right\}  \label{ex2yy}%
\end{equation}
If we assume that $g(t,w)=t$ and $f(t,w)=w$ in (\ref{prtn canal2}), the canal
hypersurfaces $\overset{part-n}{C_{5}}(s,t,w)$ can be parametrized by%
\begin{align}
&  \overset{part-n}{C_{5}}(s,t,w)=\label{ex3yy}\\
&  \left(
\begin{array}
[c]{l}%
\frac{s\left(  4+5e^{2s}+100t^{2}\right)  \cosh w}{32\sqrt{5}t}+\frac{e^{s}%
}{8}\left(  8+2s+\sqrt{5}s\sinh w\right)  ,\\
\frac{s\left(  -4+5e^{2s}+100t^{2}\right)  \cosh w}{32\sqrt{5}t}+\frac{e^{s}%
}{8}\left(  8+2s+\sqrt{5}s\sinh w\right)  ,\\
\frac{1}{4}\left(  2\cos(2s)-s\sin(2s)+\sqrt{5}s\left(  -\frac{e^{s}\cosh
w\left(  \cos(2s)+2\sin(2s)\right)  }{10t}-\cos(2s)\sinh w\right)  \right)
,\\
\frac{1}{4}\left(  2\sin(2s)+s\cos(2s)+\sqrt{5}s\left(  -\frac{e^{s}\cosh
w\left(  \sin(2s)-2\cos(2s)\right)  }{10t}-\sin(2s)\sinh w\right)  \right)
\end{array}
\right)  \nonumber
\end{align}
where the radius function has been taken as $r(s)=\frac{s}{2}.$ From
(\ref{KHpartn}), the Gaussian and mean curvatures of the canal hypersurfaces
$\overset{part-n}{C_{5}}$ are obtained as%
\begin{equation}
\left.
\begin{array}
[c]{l}%
K_{\overset{part-n}{C_{5}}}=-\frac{16\sinh w}{s^{2}\left(  \sqrt{5}-2s\sinh
w\right)  },\\
H_{\overset{psd-n}{C_{5}}}=-\frac{20-4s\left(  \sqrt{5}+6s\sinh w\right)
\sinh w}{3s\left(  -5+4s^{2}\sinh^{2}w\right)  }.
\end{array}
\right\}  \label{ex5yy}%
\end{equation}
In Figure 1 (b), one can see the projection of the canal hypersurfaces
(\ref{ex3yy}) for $w=\frac{\pi}{3}$ into $x_{1}x_{3}x_{4}$-space.
\end{example}

\begin{theorem}
\textit{The canal hypersurfaces that are formed as the envelope of a family of
pseudo hyperspheres whose centers lie on a null curve }$\gamma(s)$ with Frenet
vector fields $F_{i},$ $i\in\{1,2,3,4\},$\textit{ in }$E_{1}^{4}$ \textit{can
be parametrized by}%
\begin{equation}
\overset{null}{C_{1}}(s,t,w)=\gamma(s)+a_{1}(s,t,w)F_{1}(s)+a_{2}%
(s,t,w)F_{2}(s)-r(s)r^{\prime}(s)F_{3}(s)+a_{4}(s,t,w)F_{4}(s),
\label{canal1'}%
\end{equation}
where
\[
a_{2}(s,t,w)^{2}+a_{4}(s,t,w)^{2}=r(s)\left(  r(s)+2a_{1}(s,t,w)r^{\prime
}(s)\right)  .
\]

\textit{Furthermore, the canal hypersurfaces that are formed as the envelope
of a family of pseudo hyperbolic hyperspheres whose centers lie on a null
curve }$\gamma(s)$ with Frenet vector fields $F_{i},$ $i\in\{1,2,3,4\},$
\textit{in }$E_{1}^{4}$ \textit{can be parametrized by}%
\begin{equation}
\overset{null}{C_{2}}(s,t,w)=\gamma(s)+a_{1}(s,t,w)F_{1}(s)+a_{2}%
(s,t,w)F_{2}(s)+r(s)r^{\prime}(s)F_{3}(s)+a_{4}(s,t,w)F_{4}(s).
\label{canal2'}%
\end{equation}
where
\[
a_{2}(s,t,w)^{2}+a_{4}(s,t,w)^{2}=-r(s)\left(  r(s)+2a_{1}(s,t,w)r^{\prime
}(s)\right)  .
\]

\end{theorem}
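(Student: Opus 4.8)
The plan is to adapt the scheme already used for the pseudo null and partially null cases. First I would expand the canal hypersurface in the null Frenet frame,
\begin{equation}
\overset{null}{C}(s,t,w)-\gamma(s)=a_{1}F_{1}(s)+a_{2}F_{2}(s)+a_{3}F_{3}(s)+a_{4}F_{4}(s),
\end{equation}
with differentiable $a_{i}=a_{i}(s,t,w)$. Imposing that $\overset{null}{C}$ lies on the pseudo-Riemannian hypersphere ($\lambda=1$) or on the pseudo-Riemannian hyperbolic space ($\lambda=-1$) of center $\gamma(s)$ and radius $r(s)$, and using the inner products (\ref{nullFi ler}) — recall $\langle F_{2},F_{2}\rangle=\langle F_{4},F_{4}\rangle=1$, $\langle F_{1},F_{3}\rangle=1$ and all other products vanish — I get
\begin{equation}
a_{2}^{2}+a_{4}^{2}+2a_{1}a_{3}=\lambda r^{2},
\end{equation}
and, differentiating with respect to $s$,
\begin{equation}
a_{2}a_{2_{s}}+a_{4}a_{4_{s}}+a_{1_{s}}a_{3}+a_{1}a_{3_{s}}=\lambda rr_{s}.
\end{equation}

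Next I would differentiate the frame expansion in $s$, using $\gamma^{\prime}=F_{1}$ and the null Frenet formulas (\ref{nullFrenet}), to write $(\overset{null}{C})_{s}=b_{1}F_{1}+b_{2}F_{2}+b_{3}F_{3}+b_{4}F_{4}$ with $b_{1}=1+a_{1_{s}}+a_{2}k_{2}-a_{4}k_{3}$, $b_{2}=a_{1}k_{1}+a_{2_{s}}-a_{3}k_{2}$, $b_{3}=a_{3_{s}}-a_{2}k_{1}$, $b_{4}=a_{4_{s}}+a_{3}k_{3}$. Since $\overset{null}{C}(s,t,w)-\gamma(s)$ is normal to the canal hypersurface, the orthogonality condition $g(\overset{null}{C}-\gamma,(\overset{null}{C})_{s})=0$ becomes, again via (\ref{nullFi ler}),
\begin{equation}
a_{1}b_{3}+a_{3}b_{1}+a_{2}b_{2}+a_{4}b_{4}=0.
\end{equation}
Upon expansion the curvature contributions $a_{1}a_{2}k_{1}$, $a_{2}a_{3}k_{2}$ and $a_{3}a_{4}k_{3}$ cancel pairwise, leaving $a_{1}a_{3_{s}}+a_{3}a_{1_{s}}+a_{2}a_{2_{s}}+a_{4}a_{4_{s}}+a_{3}=0$; comparison with the differentiated sphere relation yields $\lambda rr_{s}+a_{3}=0$, hence $a_{3}=-\lambda rr^{\prime}$. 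This is $-r(s)r^{\prime}(s)$ for $\lambda=1$ and $+r(s)r^{\prime}(s)$ for $\lambda=-1$, which is exactly the $F_{3}$-coefficient in (\ref{canal1'}) and (\ref{canal2'}). Substituting $a_{3}=-\lambda rr^{\prime}$ back into $a_{2}^{2}+a_{4}^{2}+2a_{1}a_{3}=\lambda r^{2}$ gives
\begin{equation}
a_{2}^{2}+a_{4}^{2}=\lambda r^{2}+2\lambda a_{1}rr^{\prime}=\lambda r\left(r+2a_{1}r^{\prime}\right),
\end{equation}
i.e. the constraint stated in the theorem, with the $+$ sign ($\lambda=1$, pseudo hyperspheres) for $\overset{null}{C_{1}}$ and the $-$ sign ($\lambda=-1$, pseudo hyperbolic hyperspheres) for $\overset{null}{C_{2}}$.

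The point that makes the null case genuinely different — and the reason (\ref{canal1'}) and (\ref{canal2'}) are left in this implicit form rather than written out with trigonometric/hyperbolic functions as in (\ref{canal1})--(\ref{canal2}) — is that the null Frenet frame pins down only the single coefficient $a_{3}$. The two null directions $F_{1}$ and $F_{3}$ enter the sphere equation only through the bilinear term $2a_{1}a_{3}$, so once $a_{3}$ is fixed the function $a_{1}$ stays free and $a_{2},a_{4}$ are tied together by a single quadric. Consequently there is no canonical normalization of the characteristic $2$-sphere fiber, and producing an explicit section (for instance by fixing $a_{1}$ and parametrizing the resulting circle $a_{2}^{2}+a_{4}^{2}=\text{const}$ by trigonometric functions of $t,w$) would only single out one of infinitely many equivalent presentations. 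I do not foresee a substantive obstacle beyond this observation; the computation is in fact shorter than in Theorem~\ref{Teopseudonull}, precisely because fewer of the $a_{i}$ become determined, and the only place one has to be careful is the pairwise cancellation of the curvature terms in the orthogonality equation.
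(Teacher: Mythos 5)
Your proposal is correct and follows essentially the same route as the paper: expand $\overset{null}{C}-\gamma$ in the null Frenet frame, impose the (hyperbolic) hypersphere condition and its $s$-derivative, use the normality condition together with the Frenet formulas (\ref{nullFrenet}) and the metric relations (\ref{nullFi ler}) to isolate $a_{3}=-\lambda rr^{\prime}$, and substitute back to obtain the quadratic constraint on $a_{2},a_{4}$. Your closing remark on why only $a_{3}$ gets pinned down in the null case is a correct observation that the paper leaves implicit.
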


\begin{proof}
Let the center curve $\gamma:I\subseteq%
\mathbb{R}
\rightarrow E_{1}^{4}$ be a null curve with non-zero curvature with Frenet
vector fields. Then, the parametrization of the envelope of pseudo
hyperspheres (or pseudo hyperbolic hyperspheres) defining the canal
hypersurfaces $\overset{null}{C}(s,t,w)$ in $E_{1}^{4}$ can be given by%
\begin{equation}
\overset{null}{C}(s,t,w)-\gamma(s)=a_{1}(s,t,w)F_{1}(s)+a_{2}(s,t,w)F_{2}%
(s)+a_{3}(s,t,w)F_{3}(s)+a_{4}(s,t,w)F_{4}(s). \label{1'}%
\end{equation}
Furthermore, since $\overset{null}{X}(s,t,w)$ lies on the pseudo hyperspheres
($\lambda=1$) (or pseudo hyperbolic hyperspheres ($\lambda=-1$)), we have%
\begin{equation}
g(\overset{null}{C}(s,t,w)-\gamma(s),\overset{null}{C}(s,t,w)-\gamma
(s))=\lambda r^{2}(s) \label{2'}%
\end{equation}
which leads to from (\ref{1'}) and (\ref{nullFi ler}) that%
\begin{equation}
a_{2}^{2}+a_{4}^{2}+2a_{1}a_{3}=\lambda r^{2} \label{3'}%
\end{equation}
and%
\begin{equation}
a_{2}a_{2_{s}}+a_{4}a_{4_{s}}+a_{1}a_{3_{s}}+a_{3}a_{1_{s}}=\lambda rr_{s}.
\label{4'}%
\end{equation}

So, differentiating (\ref{1'}) with respect to $s$ and using the Frenet
formula (\ref{nullFrenet}), we get%
\begin{align}
(\overset{null}{C})_{s}  &  =\left(  1+a_{2}k_{2}-a_{4}k_{3}+a_{1_{s}}\right)
F_{1}+\left(  a_{1}k_{1}-a_{3}k_{2}+a_{2_{s}}\right)  F_{2}\label{5'y}\\
&  +\left(  -a_{2}k_{1}+a_{3_{s}}\right)  F_{3}+\left(  a_{3}k_{3}+a_{4_{s}%
}\right)  F_{4}.\nonumber
\end{align}
Furthermore, $\overset{null}{C}(s,t,w)-\gamma(s)$ is a normal vector to the
canal hypersurfaces, which implies that%
\begin{equation}
g(\overset{null}{C}(s,t,w)-\gamma(s),(\overset{null}{C})_{s}(s,t,w))=0
\label{6'}%
\end{equation}
and so, from (\ref{1'}), (\ref{5'y}) and (\ref{6'}) we have%
\begin{equation}
\text{ }\left(
\begin{array}
[c]{l}%
a_{1}\left(  -a_{2}k_{1}+a_{3_{s}}\right)  +a_{2}\left(  a_{1}k_{1}-a_{3}%
k_{2}+a_{2_{s}}\right) \\
+a_{3}\left(  1+a_{2}k_{2}-a_{4}k_{3}+a_{1_{s}}\right)  +a_{4}\left(
a_{3}k_{3}+a_{4_{s}}\right)
\end{array}
\right)  =0. \label{6'y}%
\end{equation}
Using (\ref{4'}) in (\ref{6'y}), we get
\begin{equation}
a_{3}=-\lambda rr_{s}. \label{7'}%
\end{equation}
Hence, using (\ref{7'}) in (\ref{3'}), we reach that%
\begin{equation}
{\small a}_{2}^{2}{\small +}a_{4}^{2}{\small =\lambda r(}r+2a_{1}%
{\small r}_{s}{\small ).} \label{8'}%
\end{equation}
Therefore from (\ref{7'}) and (\ref{8'}), the canal hypersurfaces
$\overset{null}{C}(s,t,w)$ that are formed as the envelope of a family of
pseudo hyperspheres or pseudo hyperbolic hyperspheres\textit{ }whose centers
lie on a null curve in $E_{1}^{4}$ can be parametrized by (\ref{canal1'}) or
(\ref{canal2'}), respectively.
\end{proof}

Here, let us construct an example for the canal hypersurface
$\overset{null}{C_{1}}(s,t,w)$ that is formed as the envelope of a family of
pseudo hyperspheres whose center lie on a null curve in $E_{1}^{4}.$ One can
construct the canal hypersurface $\overset{null}{C_{2}},$ similarly.

\begin{example}
Let us take the null curve (given in \cite{Kazim3})
\begin{equation}
{\small \gamma(s)=}\frac{1}{\sqrt{2}}\left(  \sinh s,\cosh s,\sin s,\cos
s\right)  \label{ex1yyy}%
\end{equation}
in $E_{1}^{4}$. The Frenet vectors and curvatures of the curve (\ref{ex1y})
are%
\begin{equation}
\left.
\begin{array}
[c]{l}%
F_{1}=\frac{1}{\sqrt{2}}\left(  \cosh s,\sinh s,\cos s,-\sin s\right)  ,\\
F_{2}=\frac{1}{\sqrt{2}}\left(  \sinh s,\cosh s,-\sin s,-\cos s\right)  ,\\
F_{3}=\frac{1}{\sqrt{2}}\left(  -\cosh s,-\sinh s,\cos s,-\sin s\right)  ,\\
F_{4}=\frac{1}{\sqrt{2}}\left(  \sinh s,\cosh s,\sin s,\cos s\right)  ,\\
k_{1}=1,\text{ }k_{2}=0,\text{ }k_{3}=-1.
\end{array}
\right\}  \label{ex2yyy}%
\end{equation}
If we assume that $g(t,w)=t$ and $f(t,w)=w$ in (\ref{prtn canal2}), the canal
hypersurfaces $\overset{null}{C_{1}}(s,t,w)$ can be parametrized by%
\begin{align}
&  \overset{null}{C_{1}}(s,t,w)=\label{ex3yyy}\\
&  \frac{1}{8\sqrt{2}t}\left(
\begin{array}
[c]{l}%
-2st\left(  1+\sqrt{3}\cos w\right)  \cosh s+\left(  8t+\sqrt{3}s\left(
1+2t^{2}\right)  \sin w\right)  \sinh s,\\
-2st\left(  1+\sqrt{3}\cos w\right)  \sinh s+\left(  8t+\sqrt{3}s\left(
1+2t^{2}\right)  \sin w\right)  \cosh s,\\
-2st\left(  1-\sqrt{3}\cos w\right)  \cos s+\left(  8t+\sqrt{3}s\left(
1-2t^{2}\right)  \sin w\right)  \sin s,\\
-2st\left(  -1+\sqrt{3}\cos w\right)  \sin s+\left(  8t+\sqrt{3}s\left(
1-2t^{2}\right)  \sin w\right)  \cos s
\end{array}
\right) \nonumber
\end{align}
where the radius function has been taken as $r(s)=\frac{s}{2}.$

In Figure 1 (c), one can see the projection of the canal hypersurfaces
(\ref{ex3yyy}) for $w=\frac{\pi}{3}$ into $x_{1}x_{3}x_{4}$-space.
\end{example}

\begin{figure}[H]
\centering
\includegraphics[
height=2.4in, width=6.4in
]{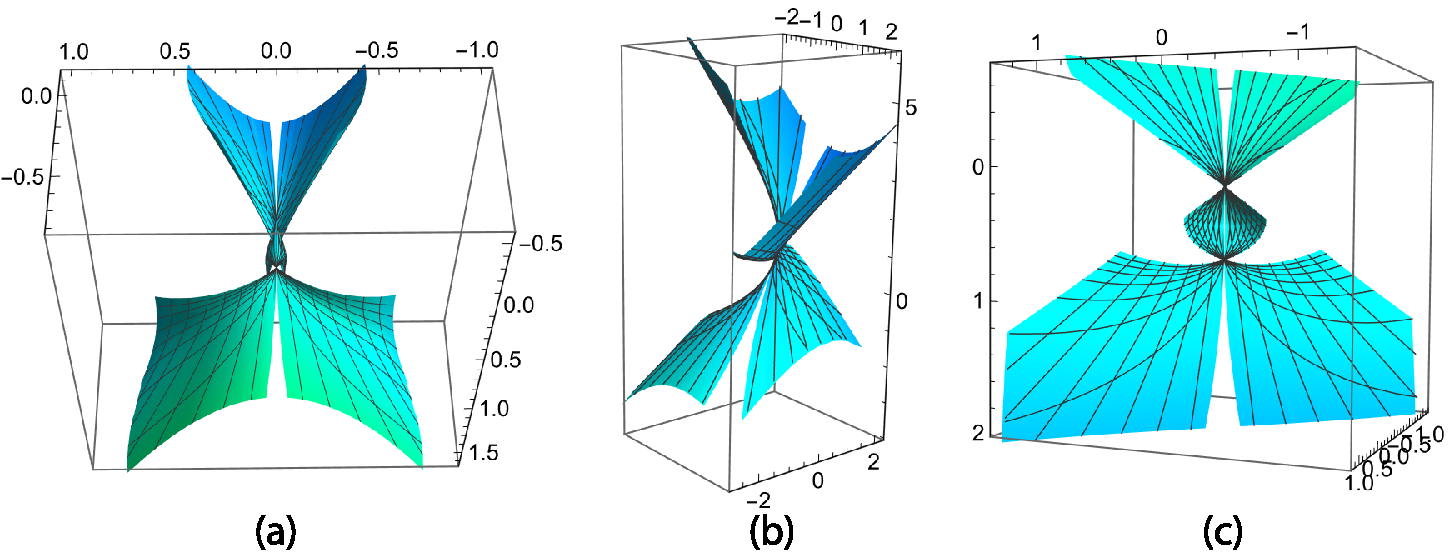}\caption{ }%
\label{fig:1}%
\end{figure}

\section{\textbf{TUBULAR HYPERSURFACES GENERATED BY PSEUDO-NULL, PARTIALLY
NULL AND NULL CURVES IN }$E_{1}^{4}$}

In this section, by taking the radius function $r(s)=r$ is constant in the
Section 2, firstly we give the paremetric expressions of the tubular
hypersurfaces which are formed as the envelope of a family of pseudo
hyperspheres or pseudo hyperbolic hyperspheres whose centers lie on a
pseudo-null curve, partially null curve or null curve in $E_{1}^{4}$. After
that, we obtain some results for Weingarten tubular hypersurfaces with the aid
of the Gaussian curvatures and mean curvatures of these tubular hypersurfaces.

\begin{theorem}
\textit{The tubular hypersurfaces that are formed as the envelope of a family
of pseudo hyperspheres whose centers lie on pseudo null curves with }Frenet
vector fields $F_{i},$ $i\in\{1,2,3,4\},$\textit{ in }$E_{1}^{4}$ \textit{can
be parametrized by}%
\begin{equation}
\left.
\begin{array}
[c]{l}%
\overset{psd-n}{T_{1}}(s,t,w)=\gamma(s)\mp r\left(  g(t,w)\sin(f(t,w))F_{2}%
(s)+\cos(f(t,w))F_{3}(s)+\frac{\sin(f(t,w))}{2g(t,w)}F_{4}(s)\right)  ,\\
\overset{psd-n}{T_{2}}(s,t,w)=\gamma(s)\mp r\left(  g(t,w)\cos(f(t,w))F_{2}%
(s)+\sin(f(t,w))F_{3}(s)+\frac{\cos(f(t,w))}{2g(t,w)}F_{4}(s)\right)  ,\\
\overset{psd-n}{T_{3}}(s,t,w)=\gamma(s)\mp r\left(  g(t,w)\sinh(f(t,w))F_{2}%
(s)+\cosh(f(t,w))F_{3}(s)-\frac{\sinh(f(t,w))}{2g(t,w)}F_{4}(s)\right)  .
\end{array}
\right\}  \label{t1}%
\end{equation}

\textit{Furthermore, the tubular hypersurfaces that are formed as the envelope
of a family of pseudo hyperbolic hyperspheres whose centers lie on pseudo null
curves with }Frenet vector fields $F_{i},$ $i\in\{1,2,3,4\},$\textit{ in
}$E_{1}^{4}$ \textit{can be parametrized by}%
\begin{equation}
\overset{psd-n}{T_{4}}(s,t,w)=\gamma(s)\mp r\left(  g(t,w)\cosh(f(t,w))F_{2}%
(s)+\sinh(f(t,w))F_{3}(s)-\frac{\cosh(f(t,w))}{2g(t,w)}F_{4}(s)\right)  .
\label{t2}%
\end{equation}

\end{theorem}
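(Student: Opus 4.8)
The plan is to derive the parametrizations (\ref{t1}) and (\ref{t2}) as the constant-radius specialization of the canal hypersurface formulas (\ref{canal1}) and (\ref{canal2}) established in the first theorem of Section~2. First I would set $r(s)=r$ in the construction used there: since then $r^{\prime}(s)=r^{\prime\prime}(s)=0$, the relations $a_{1}=-\lambda r r_{s}$ (equation (\ref{7y})) and $a_{3}^{2}+2a_{2}a_{4}=\lambda r^{2}(1-\lambda r_{s}^{2})$ (equation (\ref{8})) collapse to $a_{1}=0$ and $a_{3}^{2}+2a_{2}a_{4}=\lambda r^{2}$. Consequently the $F_{1}$-component of $\overset{psd-n}{T}(s,t,w)-\gamma(s)$ vanishes and the remaining $(a_{2},a_{3},a_{4})$-condition is the same quadric that in (\ref{canal1}) and (\ref{canal2}) is resolved through the profile functions $g(t,w)$ and $f(t,w)$; hence the four branches of (\ref{canal1}) and the single expression (\ref{canal2}) descend to parametrizations of the corresponding tubular hypersurfaces.

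Next I would carry out the substitution $r^{\prime}=0$ explicitly in each branch. In (\ref{canal1}) the term $-r(s)r^{\prime}(s)F_{1}(s)$ disappears while $\sqrt{1-r^{\prime}(s)^{2}}=1$, so $\overset{psd-n}{C_{1}},\overset{psd-n}{C_{2}},\overset{psd-n}{C_{3}}$ become exactly $\overset{psd-n}{T_{1}},\overset{psd-n}{T_{2}},\overset{psd-n}{T_{3}}$ of (\ref{t1}); similarly in (\ref{canal2}) the term $+r(s)r^{\prime}(s)F_{1}(s)$ drops and $\sqrt{1+r^{\prime}(s)^{2}}=1$, so $\overset{psd-n}{C_{5}}$ becomes $\overset{psd-n}{T_{4}}$ of (\ref{t2}). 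Here one must also record why only three tubular hypersurfaces arise from pseudo hyperspheres: the branch $\overset{psd-n}{C_{4}}$ is defined under the hypothesis $r^{\prime}(s)^{2}>1$, which is incompatible with $r^{\prime}(s)\equiv0$, hence it has no constant-radius reduction; for $\overset{psd-n}{C_{1}},\overset{psd-n}{C_{2}},\overset{psd-n}{C_{3}}$ the standing assumption $r^{\prime}(s)^{2}<1$ holds trivially.

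Since the whole argument is a substitution into already-established formulas, there is essentially no hard step; the only point requiring care is the bookkeeping that every derivative-of-$r$ factor collapses as claimed and that the sign conventions of the $F_{4}$-terms, namely $+\frac{\sin f}{2g}F_{4}$, $+\frac{\cos f}{2g}F_{4}$ and $-\frac{\sinh f}{2g}F_{4}$ in (\ref{t1}) and $-\frac{\cosh f}{2g}F_{4}$ in (\ref{t2}), are preserved under the reduction. If one prefers a self-contained derivation, the same conclusion follows by re-running the envelope computation from scratch with $r_{s}=0$: writing $\overset{psd-n}{T}(s,t,w)-\gamma(s)=a_{1}F_{1}+a_{2}F_{2}+a_{3}F_{3}+a_{4}F_{4}$, imposing $g(\overset{psd-n}{T}-\gamma,\overset{psd-n}{T}-\gamma)=\lambda r^{2}$ together with the normality condition $g(\overset{psd-n}{T}-\gamma,(\overset{psd-n}{T})_{s})=0$, and invoking the pseudo null Frenet equations (\ref{PsnullFrenet}) and the inner products (\ref{PsnullFi ler}); this reproduces $a_{1}=0$, $a_{3}^{2}+2a_{2}a_{4}=\lambda r^{2}$, and then the explicit trigonometric (resp.\ hyperbolic) solutions give (\ref{t1}) for $\lambda=1$ and (\ref{t2}) for $\lambda=-1$.
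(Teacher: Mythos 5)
Your proposal is correct and follows exactly the route the paper intends: the paper states this theorem without a written proof, obtaining it by setting $r(s)=r$ constant in the canal hypersurface parametrizations (\ref{canal1}) and (\ref{canal2}), which is precisely your substitution $r^{\prime}=r^{\prime\prime}=0$. Your additional observation that $\overset{psd-n}{C_{4}}$ drops out because its hypothesis $r^{\prime}(s)^{2}>1$ is incompatible with a constant radius is a worthwhile detail the paper leaves implicit.
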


\begin{theorem}
\textit{The Gaussian and mean curvatures of the tubular hypersurfaces
}$\overset{psd-n}{T_{i}},$ $i\in\{1,2,3,4\},$ given by (\ref{t1}) and
(\ref{t2}) in $E_{1}^{4}$ are%
\begin{equation}
\left.
\begin{array}
[c]{l}%
K_{\overset{psd-n}{T_{1}}}=\frac{k_{1}}{r^{2}\left(  2g\csc f-rk_{1}\right)
},\text{ \ \ }H_{\overset{psd-n}{T_{1}}}=\frac{1}{-r+\frac{2rg}{-4g+3rk_{1}%
\sin f}};\\
K_{\overset{psd-n}{T_{2}}}=\frac{k_{1}}{r^{2}\left(  rk_{1}-2g\sec f\right)
},\text{ \ \ }H_{\overset{psd-n}{T_{2}}}=\frac{1}{r+\frac{2rg}{4g-3rk_{1}\cos
f}};\\
K_{\overset{psd-n}{T_{3}}}=\frac{-k_{1}}{r^{2}\left(  2g\text{csch}%
f+rk_{1}\right)  },\text{ \ \ }H_{\overset{psd-n}{T_{3}}}=\frac{1}%
{-r+\frac{2rg}{-4g-3rk_{1}\sinh f}};\\
K_{\overset{psd-n}{T_{4}}}=\frac{k_{1}}{r^{2}\left(  2g\text{sech}%
f+rk_{1}\right)  },\text{ \ \ }H_{\overset{psd-n}{T_{4}}}=\frac{1}%
{r+\frac{2rg}{4g+3rk_{1}\cosh f}}.
\end{array}
\right\}  \label{t3}%
\end{equation}

\end{theorem}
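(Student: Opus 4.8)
The plan is to obtain the curvatures of the tubular hypersurfaces $\overset{psd-n}{T_{i}}$ directly by specializing the general formulas for the canal hypersurfaces $\overset{psd-n}{C_{i}}$ in Theorem~\ref{Teopseudonull}. Since a tubular hypersurface is precisely a canal hypersurface with constant radius function, the whole computation reduces to setting $r(s)=r=\text{const}$, hence $r^{\prime}=0$ and $r^{\prime\prime}=0$, in the expressions $K_{\overset{psd-n}{C_{i}}}$ and $H_{\overset{psd-n}{C_{i}}}$ given in \eqref{KHpsdn}, for $i\in\{1,2,3,4\}$. (Note that the $C_{4}$ case, which required $r^{\prime 2}>1$, degenerates here: with $r^{\prime}=0$ the factor $\sqrt{r^{\prime 2}-1}$ is imaginary, so strictly speaking $\overset{psd-n}{T_{4}}$ must instead be read off from the $\overset{psd-n}{C_{5}}$ pseudo-hyperbolic branch — or, more cleanly, re-derived from the parametrization \eqref{t2} by the same method as in Theorem~\ref{Teopseudonull}; I would flag this subtlety but carry out the substitution on the branch that survives.)

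First I would take $i=1$: substituting $r^{\prime}=r^{\prime\prime}=0$ into $K_{\overset{psd-n}{C_{1}}}$ from \eqref{KHpsdn}, the numerator collapses from
$-r(1-r^{\prime2})k_{1}^{2}\sin^{2}f+4r^{\prime\prime}(1-r^{\prime2}-rr^{\prime\prime})g^{2}+2\sqrt{1-r^{\prime2}}(1-r^{\prime2}-2rr^{\prime\prime})k_{1}g\sin f$
to $-rk_{1}^{2}\sin^{2}f+2k_{1}g\sin f = k_{1}\sin f\,(2g-rk_{1}\sin f)$, while the denominator $r^{2}(r\sqrt{1-r^{\prime2}}k_{1}\sin f-2(1-r^{\prime2}-rr^{\prime\prime})g)^{2}$ becomes $r^{2}(rk_{1}\sin f-2g)^{2}=r^{2}\sin^{2}f\,(rk_{1}-2g\csc f)^{2}$. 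Cancelling a factor $\sin f\,(2g-rk_{1}\sin f)=-\sin^{2}f(rk_{1}-2g\csc f)$ against one power of the squared bracket yields $K_{\overset{psd-n}{T_{1}}}=\dfrac{k_{1}}{r^{2}(2g\csc f-rk_{1})}$, as claimed. The mean curvature is handled the same way: set $r^{\prime}=r^{\prime\prime}=0$ in $H_{\overset{psd-n}{C_{1}}}$, simplify numerator $2rk_{1}g\sin f+3r^{2}k_{1}^{2}\sin^{2}f-8g^{2}$ and denominator $3(-r^{3}k_{1}^{2}\sin^{2}f+4rg^{2})$, and then rewrite the resulting rational function in the compact form $\dfrac{1}{-r+\frac{2rg}{-4g+3rk_{1}\sin f}}$ by clearing the inner fraction. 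I would then repeat this verbatim for $i=2,3$ (replacing $\sin f\leftrightarrow\cos f$, $\sin f\leftrightarrow\sinh f$, and tracking the sign changes in the denominators of \eqref{KHpsdn}), and for $i=4$ from the surviving pseudo-hyperbolic branch with $\cosh f$.

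The computations are entirely mechanical, so there is no real obstacle of substance; the one place that needs care is the algebraic identity rewriting $\dfrac{\text{num}}{3(-r^{3}k_{1}^{2}X^{2}+4rg^{2})}$ (with $X\in\{\sin f,\cos f,\sinh f,\cosh f\}$) into the stated nested-fraction form for $H$ — one must verify $-r+\dfrac{2rg}{-4g+3rk_{1}X}$ has the claimed value by cross-multiplying and matching against the simplified numerator $2rk_{1}gX+3r^{2}k_{1}^{2}X^{2}-8g^{2}$ (and its sign variants), and checking that no spurious cancellation of the factor $(2g-rk_{1}X)$ has been performed where that factor vanishes. Once these identities are confirmed case by case, substituting and simplifying gives exactly the entries of \eqref{t3}, completing the proof. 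In the writeup I would present the $i=1$ case in full and remark that $i\in\{2,3,4\}$ follow by the identical specialization.
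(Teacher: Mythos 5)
Your proposal is correct and is essentially the paper's own (implicit) derivation: the paper states this theorem without a written proof, but the section is explicitly premised on taking $r(s)=r$ constant in the Section 2 results, i.e.\ substituting $r^{\prime}=r^{\prime\prime}=0$ into \eqref{KHpsdn}, which is exactly what you do. Your handling of the $i=4$ case is also consistent with the paper, since $\overset{psd-n}{T_{4}}$ in \eqref{t2} is the pseudo-hyperbolic (i.e.\ $\overset{psd-n}{C_{5}}$) branch specialized to constant radius, and your algebraic simplifications (e.g.\ factoring $3X^{2}+2Xg-8g^{2}=(3X-4g)(X+2g)$ to reach the nested-fraction form of $H$) check out.
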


So from (\ref{t3}), we can give the following results:

\begin{proposition}
\textbf{i)} If the pseudo null curve $\gamma(s),$ which generates the tubular
hypersurface $\overset{psd-n}{T_{1}}$ given by (\ref{t1})\ in $E_{1}^{4},$ is
a straight line, then the tubular hypersurface is flat.

\textbf{ii)} Let the pseudo null curve $\gamma(s),$ which generates the
tubular hypersurface $\overset{psd-n}{T_{1}}$ given by (\ref{t1})\ in
$E_{1}^{4},$ not be a straight line. If $g(t,w)=\sin(f(t,w))$, then the
Gaussian curvature is constant with $\frac{1}{r^{2}(2-r)}.$
\end{proposition}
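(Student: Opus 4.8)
The plan is to extract everything from the Gaussian curvature formula for $\overset{psd-n}{T_{1}}$ already recorded in (\ref{t3}), namely
\[
K_{\overset{psd-n}{T_{1}}}=\frac{k_{1}}{r^{2}\left(2g\csc f-rk_{1}\right)},
\]
together with the convention stated in Section 1 that the first curvature of a pseudo null curve satisfies $k_{1}(s)=0$ precisely when $\gamma$ is a straight line and $k_{1}(s)=1$ in all other cases, plus the fact that the radius function $r$ is a nonzero constant for a tubular hypersurface. No further geometric computation is needed; the argument mirrors the flatness Propositions proved earlier for the canal hypersurface $\overset{psd-n}{C_{1}}$.

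For part (i) I would substitute $k_{1}(s)\equiv 0$ into the displayed expression. The numerator then vanishes identically while the denominator $2r^{2}g\csc f$ is nonzero on the open set where (\ref{t1}) is an immersion (so that $g\neq0$ and $\sin f\neq0$), hence $K_{\overset{psd-n}{T_{1}}}\equiv 0$; by the definition of flatness given at the end of Section 1 this means $\overset{psd-n}{T_{1}}$ is flat.

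For part (ii), since $\gamma$ is not a straight line we have $k_{1}(s)=1$, so the formula reduces to $K_{\overset{psd-n}{T_{1}}}=\dfrac{1}{r^{2}\left(2g\csc f-r\right)}$. Imposing $g(t,w)=\sin(f(t,w))$ gives $g\csc f=\sin f\cdot\csc f=1$, whence $K_{\overset{psd-n}{T_{1}}}=\dfrac{1}{r^{2}(2-r)}$. Because $r$ is constant along a tubular hypersurface, the right-hand side is a constant, which is exactly the claimed value.

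I expect no real analytic obstacle: both parts are immediate algebraic consequences of (\ref{t3}) once the curvature dichotomy $k_{1}\in\{0,1\}$ is invoked. The only points deserving a word of care are the regularity/domain remarks — that the non-vanishing of the denominators and the identity $g\csc f=1$ are to be read on the open set where (\ref{t1}) is an immersion (so that $K$ is defined), and that $r\neq 2$ must implicitly be assumed for the constant in (ii) to be finite.
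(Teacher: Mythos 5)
Your proposal is correct and follows exactly the route the paper intends: the paper states this proposition as an immediate consequence of the curvature formula in (\ref{t3}), and your substitutions $k_{1}=0$ (giving $K\equiv 0$) and $k_{1}=1$ with $g\csc f=1$ (giving $K=\tfrac{1}{r^{2}(2-r)}$) are precisely that derivation. Your added remarks on regularity and on the implicit requirement $r\neq 2$ are sensible refinements the paper leaves unstated.
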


\begin{proposition}
\textbf{i)} The tubular hypersurface $\overset{psd-n}{T_{1}}$ given by
(\ref{t1})\ in $E_{1}^{4}$ cannot be minimal.

\textbf{ii) }If the pseudo null curve $\gamma(s),$ which generates the tubular
hypersurface $\overset{psd-n}{T_{1}}$ given by (\ref{t1})\ in $E_{1}^{4},$ is
a straight line, then the mean curvature of the tubular hypersurface
$\overset{psd-n}{T_{1}}$ given by (\ref{t1})\ in $E_{1}^{4}$ is constant with
$\frac{-2}{3r}$.

\textbf{iii)} Let the pseudo null curve $\gamma(s),$ which generates the
tubular hypersurface $\overset{psd-n}{T_{1}}$ given by (\ref{t1})\ in
$E_{1}^{4},$ not be a straight line. If $g(t,w)=\sin(f(t,w)),$ then the mean
curvature of the tubular hypersurface $\overset{psd-n}{T_{1}}$ given by
(\ref{t1})\ in $E_{1}^{4}$ is constant with $\frac{3r-4}{3r(2-r)},$ $2\neq
r\neq\frac{4}{3}$.
\end{proposition}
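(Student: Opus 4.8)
The plan is to read off all three claims as specializations of the closed form for the mean curvature already recorded in (\ref{t3}), namely
\[
H_{\overset{psd-n}{T_{1}}}=\frac{1}{-r+\dfrac{2rg}{-4g+3rk_{1}\sin f}} .
\]
For (ii), a pseudo null straight line has $k_{1}(s)=0$; substituting this, the inner fraction collapses to $\frac{2rg}{-4g}=-\frac{r}{2}$, whence $H_{\overset{psd-n}{T_{1}}}=\frac{1}{-r-r/2}=-\frac{2}{3r}$, which is independent of $s,t,w$. For (iii), a non-straight pseudo null curve has $k_{1}(s)=1$; substituting this together with $g=\sin f$ gives $-4g+3rk_{1}\sin f=(3r-4)\sin f$, so (using $r\neq\tfrac{4}{3}$ to cancel $\sin f$) the inner fraction is $\frac{2r}{3r-4}$, and putting everything over a common denominator yields
\[
H_{\overset{psd-n}{T_{1}}}=\frac{1}{-r+\dfrac{2r}{3r-4}}=\frac{3r-4}{3r(2-r)} ,
\]
which is constant provided also $r\neq2$ so that the value is finite. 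These two parts are routine once (\ref{t3}) is granted; the only care needed is to record the excluded radii $r\in\{\tfrac{4}{3},2\}$ exactly as in the statement.

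For (i) the plan is to put the mean curvature over a common denominator,
\[
H_{\overset{psd-n}{T_{1}}}=\frac{3rk_{1}\sin f-4g}{3r\,(2g-rk_{1}\sin f)} ,
\]
and to show the numerator cannot vanish identically on the parameter domain. Since $r$ is a positive constant and $k_{1}$ is constant, $3rk_{1}\sin f-4g\equiv 0$ would force $g=\tfrac{3rk_{1}}{4}\sin f$: if $k_{1}=0$ this reads $g\equiv 0$, impossible since $g$ appears in the denominators of the parametrization (\ref{t1}); if $k_{1}=1$ it makes $g$ a function of $f$ alone, so $f_{t}g_{w}-f_{w}g_{t}\equiv 0$. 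The latter is incompatible with $\overset{psd-n}{T_{1}}$ being a nondegenerate hypersurface, because specializing the first fundamental form determinant from Theorem \ref{Teopseudonull} (formula (\ref{14}) with $r'=r''=0$) shows that $\det[g_{ij}]$ is a nonzero multiple of $(f_{t}g_{w}-f_{w}g_{t})^{2}\sin^{2}f$. Hence $H_{\overset{psd-n}{T_{1}}}\not\equiv 0$ and the tubular hypersurface can never be minimal.

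I expect (ii) and (iii) to be purely computational and (i) to carry the only real subtlety. The tempting one-line argument for (i) — ``$H$ is a reciprocal, hence nowhere zero'' — is not by itself conclusive, since one must also exclude the possibility that the inner denominator $-4g+3rk_{1}\sin f$ vanishes identically, equivalently the degenerate configuration $g=\tfrac{3rk_{1}}{4}\sin f$. So the main obstacle is precisely to rule out that configuration, which the plan handles by invoking the regularity condition $f_{t}g_{w}-f_{w}g_{t}\neq 0$ forced by the determinant of the first fundamental form computed (in the canal setting) in Theorem \ref{Teopseudonull}.
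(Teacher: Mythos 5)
Your proposal is correct and follows the same route the paper takes: the paper offers no explicit argument beyond ``from (\ref{t3})'', and your parts (ii) and (iii) are exactly the intended substitutions $k_{1}=0$ and $k_{1}=1$, $g=\sin f$ into that formula. For part (i) you go further than the paper by explicitly ruling out the degenerate configuration $g\equiv\tfrac{3rk_{1}}{4}\sin f$ via the regularity condition $f_{t}g_{w}-f_{w}g_{t}\neq 0$ implicit in $\det[g_{ij}]\neq 0$; this is a legitimate and slightly more careful version of the paper's implicit ``$H$ is a reciprocal, hence nonzero'' reasoning.
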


Also, if%
\begin{equation}
\left.
\begin{array}
[c]{l}%
\left(  H_{\overset{psd-n}{T_{i}}}\right)  _{s}\left(
K_{\overset{psd-n}{T_{i}}}\right)  _{t}-\left(  H_{\overset{psd-n}{T_{i}}%
}\right)  _{t}\left(  K_{\overset{psd-n}{T_{i}}}\right)  _{s}=0,\\
\left(  H_{\overset{psd-n}{T_{i}}}\right)  _{s}\left(
K_{\overset{psd-n}{T_{i}}}\right)  _{w}-\left(  H_{\overset{psd-n}{T_{i}}%
}\right)  _{w}\left(  K_{\overset{psd-n}{T_{i}}}\right)  _{s}=0,\\
\left(  H_{\overset{psd-n}{T_{i}}}\right)  _{t}\left(
K_{\overset{psd-n}{T_{i}}}\right)  _{w}-\left(  H_{\overset{psd-n}{T_{i}}%
}\right)  _{w}\left(  K_{\overset{psd-n}{T_{i}}}\right)  _{t}=0
\end{array}
\right\}  \label{t4}%
\end{equation}
hold on a hypersurface, then we call the hypersurface as $\left(
H_{\overset{psd-n}{T_{i}}},K_{\overset{psd-n}{T_{i}}}\right)  _{st}%
$-Weingarten, $\left(  H_{\overset{psd-n}{T_{i}}},K_{\overset{psd-n}{T_{i}}%
}\right)  _{sw}$-Weingarten, $\left(  H_{\overset{psd-n}{T_{i}}}%
,K_{\overset{psd-n}{T_{i}}}\right)  _{tw}$-Weingarten hypersurface,
respectively, where $\left(  H_{\overset{psd-n}{T_{i}}}\right)  _{s}%
=\frac{\partial\left(  H_{\overset{psd-n}{T_{i}}}\right)  }{\partial s}$ and
so on. Thus,

\begin{theorem}
The tubular hypersurfaces $\overset{psd-n}{T_{i}},$ $i\in\{1,2,3,4\},$ given
by (\ref{t1}) and (\ref{t2}) in $E_{1}^{4},$ are $\left(
H_{\overset{psd-n}{T_{i}}},K_{\overset{psd-n}{T_{i}}}\right)  _{st}%
$-Weingarten, $\left(  H_{\overset{psd-n}{T_{i}}},K_{\overset{psd-n}{T_{i}}%
}\right)  _{sw}$-Weingarten and $\left(  H_{\overset{psd-n}{T_{i}}%
},K_{\overset{psd-n}{T_{i}}}\right)  _{tw}$-Weingarten.
\end{theorem}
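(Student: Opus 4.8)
The plan is to exploit the fact that for a \emph{tubular} hypersurface the radius $r$ is constant, which forces the Gaussian curvature and the mean curvature in (\ref{t3}) to be tied together by an \emph{affine} relation with \emph{constant} coefficients. So the first step I would carry out is to record, in each of the four cases $i\in\{1,2,3,4\}$, an identity of the form
\[
3H_{\overset{psd-n}{T_{i}}}-r^{2}K_{\overset{psd-n}{T_{i}}}=c_{i},
\]
where $c_{i}=-\tfrac{2}{r}$ for $i=1,3$ and $c_{i}=\tfrac{2}{r}$ for $i=2,4$; the one thing that matters is that $c_{i}$ is a \emph{constant}. This identity follows from (\ref{t3}) by a one-line computation --- after clearing denominators, all the terms involving $f(t,w)$ and $g(t,w)$ cancel --- and it is also the specialization to $r^{\prime}=r^{\prime\prime}=0$ of the relations (\ref{y2y})--(\ref{y3y}), since $\overset{psd-n}{T_{1}},\overset{psd-n}{T_{2}},\overset{psd-n}{T_{3}}$ are $\overset{psd-n}{C_{1}},\overset{psd-n}{C_{2}},\overset{psd-n}{C_{3}}$ with constant radius and $\overset{psd-n}{T_{4}}$ is $\overset{psd-n}{C_{5}}$ with constant radius.

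Next, because $r$ and $c_{i}$ are constants, differentiating the above identity with respect to each of $s$, $t$, $w$ gives $3\,(H_{\overset{psd-n}{T_{i}}})_{x}=r^{2}\,(K_{\overset{psd-n}{T_{i}}})_{x}$ for every $x\in\{s,t,w\}$; in other words the gradient of $H_{\overset{psd-n}{T_{i}}}$ in the $(s,t,w)$ variables is everywhere the constant multiple $r^{2}/3$ of the gradient of $K_{\overset{psd-n}{T_{i}}}$. Hence, for any pair $x,y\in\{s,t,w\}$,
\[
(H_{\overset{psd-n}{T_{i}}})_{x}(K_{\overset{psd-n}{T_{i}}})_{y}-(H_{\overset{psd-n}{T_{i}}})_{y}(K_{\overset{psd-n}{T_{i}}})_{x}=\frac{r^{2}}{3}\Bigl((K_{\overset{psd-n}{T_{i}}})_{x}(K_{\overset{psd-n}{T_{i}}})_{y}-(K_{\overset{psd-n}{T_{i}}})_{y}(K_{\overset{psd-n}{T_{i}}})_{x}\Bigr)=0 .
\]
Taking $(x,y)$ equal to $(s,t)$, $(s,w)$ and $(t,w)$ in turn reproduces exactly the three equations of (\ref{t4}), so each $\overset{psd-n}{T_{i}}$ is simultaneously $\bigl(H_{\overset{psd-n}{T_{i}}},K_{\overset{psd-n}{T_{i}}}\bigr)_{st}$-Weingarten, $\bigl(H_{\overset{psd-n}{T_{i}}},K_{\overset{psd-n}{T_{i}}}\bigr)_{sw}$-Weingarten and $\bigl(H_{\overset{psd-n}{T_{i}}},K_{\overset{psd-n}{T_{i}}}\bigr)_{tw}$-Weingarten, which is the claim.

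I do not expect a genuine obstacle: the only step with any content is the affine relation, and that is pure cancellation. An equivalent way to organize the argument --- which sidesteps even naming the relation --- is to observe that $k_{1}$ is constant along a pseudo null curve, so that in each case both $H_{\overset{psd-n}{T_{i}}}$ and $K_{\overset{psd-n}{T_{i}}}$ depend on $(s,t,w)$ only through a single auxiliary function of $(t,w)$ (namely $g\csc f$, $g\sec f$, $g\,\mathrm{csch}\,f$ or $g\,\mathrm{sech}\,f$ according to $i$). Two functions of one common scalar are functionally dependent, so every $2\times2$ minor of the Jacobian $\partial(H_{\overset{psd-n}{T_{i}}},K_{\overset{psd-n}{T_{i}}})/\partial(s,t,w)$ vanishes identically --- and those minors are precisely the left-hand sides of (\ref{t4}).
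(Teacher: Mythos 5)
Your proof is correct, and it reaches the conclusion by a genuinely cleaner route than the paper. The paper's proof (given for $i=1$) computes all six partial derivatives $\bigl(H_{\overset{psd-n}{T_{1}}}\bigr)_{x}$ and $\bigl(K_{\overset{psd-n}{T_{1}}}\bigr)_{x}$, $x\in\{s,t,w\}$, explicitly from (\ref{t3}) and observes that each $2\times2$ minor cancels; the proportionality $\bigl(H\bigr)_{x}=\tfrac{r^{2}}{3}\bigl(K\bigr)_{x}$ is visible in the displayed formulas (\ref{t5}) but is never named, and the cases $i=2,3,4$ are left to the reader. You instead establish the single identity $3H_{\overset{psd-n}{T_{i}}}-r^{2}K_{\overset{psd-n}{T_{i}}}=c_{i}$ with $c_{i}$ constant (I checked all four cases from (\ref{t3}): $c_{1}=c_{3}=-\tfrac{2}{r}$, $c_{2}=c_{4}=\tfrac{2}{r}$, exactly the constant-radius specialization of (\ref{y2y})--(\ref{y3y}) under the correspondence $T_{1},T_{2},T_{3},T_{4}\leftrightarrow C_{1},C_{2},C_{3},C_{5}$), and then differentiation gives the gradient proportionality and hence the vanishing of all three minors in (\ref{t4}) at once. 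This buys uniformity over $i$, avoids recomputing partials, and makes transparent \emph{why} the tubular hypersurfaces are Weingarten: they are linear Weingarten with constant coefficients. Your alternative remark (that for constant $k_{1}$ both curvatures are functions of the single scalar $g\csc f$, etc., so all Jacobian minors vanish by functional dependence) is also sound, though it relies on $k_{1}\in\{0,1\}$ being constant, whereas your main argument and the paper's computation do not need this. No gaps.
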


\begin{proof}
Let us give the proof for $i=1.$ From (\ref{t3}), we have%
\begin{equation}%
\begin{array}
[c]{l}%
\left(  H_{\overset{psd-n}{T_{1}}}\right)  _{s}=\frac{2k_{1}^{\prime}g\sin
f}{3\left(  -2g+rk_{1}\sin f\right)  ^{2}}\\
\left(  H_{\overset{psd-n}{T_{1}}}\right)  _{t}=\frac{2k_{1}\left(  gf_{t}\cos
f-g_{t}\sin f\right)  }{3\left(  -2g+rk_{1}\sin f\right)  ^{2}}\\
\left(  H_{\overset{psd-n}{T_{1}}}\right)  _{w}=\frac{2k_{1}\left(  gf_{w}\cos
f-g_{w}\sin f\right)  }{3\left(  -2g+rk_{1}\sin f\right)  ^{2}}%
\end{array}
\text{ \ \ \ and \ \ \ }\left.
\begin{array}
[c]{l}%
\left(  K_{\overset{psd-n}{T_{1}}}\right)  _{s}=\frac{2k_{1}^{\prime}g\sin
f}{r^{2}\left(  -2g+rk_{1}\sin f\right)  ^{2}}\\
\left(  K_{\overset{psd-n}{T_{1}}}\right)  _{t}=\frac{2k_{1}\left(  gf_{t}\cos
f-g_{t}\sin f\right)  }{r^{2}\left(  -2g+rk_{1}\sin f\right)  ^{2}}\\
\left(  K_{\overset{psd-n}{T_{1}}}\right)  _{w}=\frac{2k_{1}\left(  gf_{w}\cos
f-g_{w}\sin f\right)  }{r^{2}\left(  -2g+rk_{1}\sin f\right)  ^{2}},
\end{array}
\right\}  \label{t5}%
\end{equation}
where $f_{t}=\frac{\partial f(t,w)}{\partial t},$ $f_{w}=\frac{\partial
f(t,w)}{\partial w}$ and so on.

So, from (\ref{t5}) we get%
\begin{equation}
\left.
\begin{array}
[c]{l}%
\left(  H_{\overset{psd-n}{T_{1}}}\right)  _{s}\left(
K_{\overset{psd-n}{T_{1}}}\right)  _{t}-\left(  H_{\overset{psd-n}{T_{1}}%
}\right)  _{t}\left(  K_{\overset{psd-n}{T_{1}}}\right)  _{s}=0,\\
\left(  H_{\overset{psd-n}{T_{1}}}\right)  _{s}\left(
K_{\overset{psd-n}{T_{1}}}\right)  _{w}-\left(  H_{\overset{psd-n}{T_{1}}%
}\right)  _{w}\left(  K_{\overset{psd-n}{T_{1}}}\right)  _{s}=0,\\
\left(  H_{\overset{psd-n}{T_{1}}}\right)  _{t}\left(
K_{\overset{psd-n}{T_{1}}}\right)  _{w}-\left(  H_{\overset{psd-n}{T_{1}}%
}\right)  _{w}\left(  K_{\overset{psd-n}{T_{1}}}\right)  _{t}=0
\end{array}
\right\}  \label{t7}%
\end{equation}
and this completes the proof. Similarly, one can obtain the results for
$i=2,3,4$.
\end{proof}

\begin{theorem}
\textit{The tubular hypersurfaces that are formed as the envelope of a family
of pseudo hyperspheres whose centers lie on partially null curves with }Frenet
vector fields $F_{i},$ $i\in\{1,2,3,4\},$\textit{ in }$E_{1}^{4}$ \textit{can
be parametrized by}%
\begin{equation}
\left.
\begin{array}
[c]{l}%
\overset{part-n}{T_{1}}(s,t,w)=\gamma(s)\mp r\left(  \sin(f(t,w))F_{2}%
(s)+g(t,w)\cos(f(t,w))F_{3}(s)+\frac{\cos(f(t,w))}{2g(t,w)}F_{4}(s)\right)
,\\
\overset{part-n}{T_{2}}(s,t,w)=\gamma(s)\mp r\left(  \cos(f(t,w))F_{2}%
(s)+g(t,w)\sin(f(t,w))F_{3}(s)+\frac{\sin(f(t,w))}{2g(t,w)}F_{4}(s)\right)
,\\
\overset{part-n}{T_{3}}(s,t,w)=\gamma(s)\mp r\left(  \cosh(f(t,w))F_{2}%
(s)+g(t,w)\sinh(f(t,w))F_{3}(s)-\frac{\sinh(f(t,w))}{2g(t,w)}F_{4}(s)\right)
,
\end{array}
\right\}  \label{t8}%
\end{equation}

\textit{Furthermore, the tubular hypersurfaces that are formed as the envelope
of a family of pseudo hyperbolic hyperspheres whose centers lie on partially
null curves with }Frenet vector fields $F_{i},$ $i\in\{1,2,3,4\},$\textit{ in
}$E_{1}^{4}$ \textit{can be parametrized by}%
\begin{equation}
\overset{part-n}{T_{4}}(s,t,w)=\gamma(s)\mp r\left(  \sinh(f(t,w))F_{2}%
(s)+g(t,w)\cosh(f(t,w))F_{3}(s)-\frac{\cosh(f(t,w))}{2g(t,w)}F_{4}(s)\right)
. \label{t9}%
\end{equation}

\end{theorem}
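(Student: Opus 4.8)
The plan is to obtain (\ref{t8}) and (\ref{t9}) directly from the parametrizations (\ref{prtn canal1}) and (\ref{prtn canal2}) of the canal hypersurfaces generated by a partially null curve, by specializing to a constant radius function $r(s)\equiv r$. The first step is to observe that a constant radius forces $r'(s)=0$ and $r''(s)=0$, so that $\sqrt{1-r'(s)^2}=\sqrt{1+r'(s)^2}=1$ and the tangential term $\mp r(s)r'(s)F_1(s)$ vanishes identically in each line of (\ref{prtn canal1}) and in (\ref{prtn canal2}). Substituting $r'=0$ into the first three lines of (\ref{prtn canal1}) then produces precisely $\overset{part-n}{T_1}$, $\overset{part-n}{T_2}$, $\overset{part-n}{T_3}$ as displayed in (\ref{t8}), and substituting into (\ref{prtn canal2}) produces $\overset{part-n}{T_4}$ as in (\ref{t9}). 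The fourth line of (\ref{prtn canal1}), namely $\overset{part-n}{C_4}$, is built under the standing hypothesis $r'(s)^2>1$, which is incompatible with $r'\equiv 0$; this is exactly why only the three tubular hypersurfaces $T_1,T_2,T_3$ occur among the pseudo-hypersphere envelopes, while there is a single pseudo-hyperbolic-hypersphere envelope $T_4$.

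For a self-contained derivation I would repeat the envelope argument of the preceding proof with $r$ held constant. Writing $\overset{part-n}{T}(s,t,w)-\gamma(s)=\sum_{i=1}^4 a_i(s,t,w)F_i(s)$ and requiring that $\overset{part-n}{T}$ lie on the pseudo-hypersphere ($\lambda=1$) or pseudo-hyperbolic hypersphere ($\lambda=-1$) of radius $r$, the products in (\ref{PartnullFi ler}) give $a_1^2+a_2^2+2a_3a_4=\lambda r^2$. Differentiating this in $s$ has right-hand side $\lambda r r_s=0$ since $r$ is constant; combining it with the normality condition $g(\overset{part-n}{T}-\gamma,(\overset{part-n}{T})_s)=0$ and the Frenet formulas (\ref{PartnullFrenet}) forces $a_1=0$, exactly as (\ref{prtn 7}) degenerates when $r_s=0$. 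The surviving constraint is therefore $a_2^2+2a_3a_4=\lambda r^2$.

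It then remains to parametrize the solution set of $a_2^2+2a_3a_4=\lambda r^2$ so as to match (\ref{t8}) and (\ref{t9}) line by line. The relevant observation is that, by (\ref{PartnullFi ler}), $\{F_2,F_3,F_4\}$ spans a Lorentzian $3$-space in which $F_2$ is unit spacelike and $\{F_3,F_4\}$ is a null basis of a Lorentzian plane, so $a_2^2+2a_3a_4$ is literally the induced quadratic form; its level set $\{=r^2\}$ is a quadric of de Sitter type and $\{=-r^2\}$ a quadric of hyperbolic type. For $\lambda=1$ I would take $a_2=\mp r\sin f$, $a_3=\mp r g\cos f$, $a_4=\mp r\frac{\cos f}{2g}$ to obtain $T_1$; $a_2=\mp r\cos f$, $a_3=\mp r g\sin f$, $a_4=\mp r\frac{\sin f}{2g}$ to obtain $T_2$; and $a_2=\mp r\cosh f$, $a_3=\mp r g\sinh f$, $a_4=\pm r\frac{\sinh f}{2g}$ to obtain $T_3$; each of these satisfies the constraint by $\sin^2 f+\cos^2 f=1$ or $\cosh^2 f-\sinh^2 f=1$. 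For $\lambda=-1$ one takes $a_2=\mp r\sinh f$, $a_3=\mp r g\cosh f$, $a_4=\pm r\frac{\cosh f}{2g}$, which gives $a_2^2+2a_3a_4=-r^2$ and yields $T_4$. Reinserting into $\overset{part-n}{T}-\gamma=\sum_{i=1}^4 a_iF_i$ gives (\ref{t8}) and (\ref{t9}).

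I do not anticipate a genuine difficulty here: the whole content is that the constant-radius specialization kills the $a_1=-\lambda r r_s$ term, and the remaining homogeneous quadric is the same one already parametrized in the canal case. The only thing requiring care is the bookkeeping of the $\mp$ and $\pm$ sign choices, so that the three (respectively one) families above coincide term by term with those already fixed in (\ref{prtn canal1}) and (\ref{prtn canal2}); once the specialization $r_s=0$ is made explicit, this matching is automatic.
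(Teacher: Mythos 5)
Your proposal is correct and matches the paper's (implicit) argument: the paper offers no separate proof for this theorem, deriving it exactly as you do by setting $r(s)\equiv r$ constant in the canal hypersurface parametrizations (\ref{prtn canal1}) and (\ref{prtn canal2}), so that $r'=0$ kills the $F_1$-term and the square-root factors become $1$, while $\overset{part-n}{C_4}$ is excluded because its hypothesis $r'(s)^2>1$ is incompatible with a constant radius. Your additional self-contained re-derivation of the constraint $a_2^2+2a_3a_4=\lambda r^2$ and the sign bookkeeping are consistent with the paper's envelope argument and add nothing that conflicts with it.
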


\begin{theorem}
\textit{The Gaussian and mean curvatures of the tubular hypersurfaces
}$\overset{part-n}{T_{i}},$ $i\in\{1,2,3,4\},$ given by (\ref{t8}) and
(\ref{t9}) in $E_{1}^{4}$ are%
\begin{equation}
\left.
\begin{array}
[c]{l}%
K_{\overset{part-n}{T_{1}}}=\frac{k_{1}}{r^{2}\left(  \csc f-rk_{1}\right)
},\text{ \ \ }H_{\overset{part-n}{T_{1}}}=\frac{1}{-r+\frac{r}{-2+3rk_{1}\sin
f}};\\
K_{\overset{part-n}{T_{2}}}=\frac{-k_{1}}{r^{2}\left(  \sec f-rk_{1}\right)
},\text{ \ \ }H_{\overset{part-n}{T_{2}}}=\frac{-1}{-r+\frac{r}{-2+3rk_{1}\cos
f}};\\
K_{\overset{part-n}{T_{3}}}=\frac{k_{1}}{r^{2}\left(  \text{sech}%
f-rk_{1}\right)  },\text{ \ \ }H_{\overset{part-n}{T_{3}}}=\frac{1}%
{-r+\frac{r}{-2+3rk_{1}\cosh f}};\\
K_{\overset{part-n}{T_{4}}}=\frac{-k_{1}}{r^{2}\left(  \text{csch}%
f-rk_{1}\right)  },\text{ \ \ }H_{\overset{part-n}{T_{4}}}=\frac{-1}%
{-r+\frac{r}{-2+3rk_{1}\sinh f}}.
\end{array}
\right\}  \label{t10}%
\end{equation}

\end{theorem}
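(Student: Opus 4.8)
The plan is to observe that the tubular hypersurfaces $\overset{part-n}{T_i}$ are exactly the canal hypersurfaces $\overset{part-n}{C_i}$ of the previous theorems specialized to a constant radius function, and then to read the curvatures off the formulas (\ref{KHpartn}) that were already established. Taking $r(s)=r$ forces $r^{\prime}=r^{\prime\prime}=0$; comparing (\ref{t8})--(\ref{t9}) with (\ref{prtn canal1})--(\ref{prtn canal2}) one sees that the term $-rr^{\prime}F_{1}$ drops out and each factor $\sqrt{1\mp r^{\prime 2}}$ collapses to $1$, so that $\overset{part-n}{T_{1}},\overset{part-n}{T_{2}},\overset{part-n}{T_{3}}$ arise from $\overset{part-n}{C_{1}},\overset{part-n}{C_{2}},\overset{part-n}{C_{3}}$, while $\overset{part-n}{T_{4}}$ arises from $\overset{part-n}{C_{5}}$ (the case $\overset{part-n}{C_{4}}$ being excluded, since it requires $r^{\prime 2}>1$, impossible for constant $r$). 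Hence it suffices to substitute $r^{\prime}=r^{\prime\prime}=0$ into the corresponding entries of (\ref{KHpartn}) and simplify.

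First I would handle the Gaussian curvatures. Writing $\sigma$ for the relevant circular or hyperbolic factor ($\sin f$, $\cos f$, $\cosh f$ or $\sinh f$), each numerator of $K_{\overset{part-n}{C_{i}}}$ at $r^{\prime}=r^{\prime\prime}=0$ reduces to an expression of the form $\pm\bigl(rk_{1}^{2}\sigma^{2}-k_{1}\sigma\bigr)=\pm k_{1}\sigma\,(rk_{1}\sigma-1)$, whereas the denominator is $r^{2}(1\mp rk_{1}\sigma)^{2}$; cancelling the common factor $1\mp rk_{1}\sigma$ yields $K=\dfrac{\pm k_{1}\sigma}{r^{2}(1\mp rk_{1}\sigma)}$, which is precisely the claimed value once $\sigma$ is divided out of numerator and denominator (producing $\csc f$, $\sec f$, $\text{sech}\,f$, $\text{csch}\,f$ respectively, together with the stated signs).

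Next I would treat the mean curvatures in the same way. Setting $r^{\prime}=r^{\prime\prime}=0$ in $H_{\overset{part-n}{C_{i}}}$ gives, up to the overall sign appropriate to each case, $H=\dfrac{rk_{1}\sigma+3r^{2}k_{1}^{2}\sigma^{2}-2}{3r(1-r^{2}k_{1}^{2}\sigma^{2})}$. Putting $x=rk_{1}\sigma$, the numerator factors as $3x^{2}+x-2=(3x-2)(x+1)$ and the denominator as $3r(1-x)(1+x)$, so after cancelling $1+x$ one obtains $H=\dfrac{3rk_{1}\sigma-2}{3r(1-rk_{1}\sigma)}$, which is exactly $\dfrac{1}{-r+\frac{r}{-2+3rk_{1}\sigma}}$; for the entries $\overset{part-n}{T_{2}}$ and $\overset{part-n}{T_{4}}$ the same manipulation applies after accounting for the leading minus sign (respectively the sign of the denominator $r^{2}k_{1}^{2}\sigma^{2}-1$). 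Running this bookkeeping through all four cases, with the correct $\mp$ and the correct trigonometric or hyperbolic function for $\sigma$, produces every formula in (\ref{t10}).

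The only genuine work here is clerical: keeping the four sign conventions straight and checking that the factorization $3x^{2}+x-2=(3x-2)(x+1)$ and the cancellation of $1\mp rk_{1}\sigma$ behave uniformly across the circular and hyperbolic cases. If a self-contained derivation is preferred instead, one would repeat verbatim the computation in the proof of Theorem \ref{Teopseudonull} --- compute $(\overset{part-n}{T_{i}})_{s},(\overset{part-n}{T_{i}})_{t},(\overset{part-n}{T_{i}})_{w}$ using (\ref{PartnullFrenet}), then $N$, the matrices $[g_{ij}]$, $[h_{ij}]$, their determinants, and finally the shape operator from (\ref{7yyy}) --- but this is strictly longer and contains nothing the specialization argument does not already provide; there the only obstacle is the bulk of the intermediate expressions, not any conceptual difficulty.
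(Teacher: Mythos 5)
Your proposal is correct and follows exactly the route the paper intends: the section is explicitly framed as specializing the Section 2 results to constant radius, so substituting $r^{\prime}=r^{\prime\prime}=0$ into (\ref{KHpartn}) (with $\overset{part-n}{T_{1}},\overset{part-n}{T_{2}},\overset{part-n}{T_{3}},\overset{part-n}{T_{4}}$ matched to $\overset{part-n}{C_{1}},\overset{part-n}{C_{2}},\overset{part-n}{C_{3}},\overset{part-n}{C_{5}}$, as you correctly note) is precisely the paper's derivation, which it does not even write out. Your cancellations, in particular the factorization $3x^{2}+x-2=(3x-2)(x+1)$ against $3r(1-x)(1+x)$ and the identification of $\overset{part-n}{T_{4}}$ with $\overset{part-n}{C_{5}}$ rather than the inadmissible $\overset{part-n}{C_{4}}$, all check out against (\ref{t10}).
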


Thus,

\begin{proposition}
If $k_{1}(s)=0$ for the partially null curve $\gamma(s),$ which
generates the tubular hypersurfaces $\overset{part-n}{T_{i}}$ given
by (\ref{t8})\ and (\ref{t9})\ in $E_{1}^{4}$, then these tubular
hypersurfaces are flat and also, the tubular hypersurfaces
$\overset{part-n}{T_{i}}$ have constant mean curvature with
$\frac{2(-1)^{i}}{3r}$.
\end{proposition}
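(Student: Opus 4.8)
The plan is to read the statement directly off the curvature formulas (\ref{t10}), since the hypothesis is precisely that $k_{1}(s)=0$ on the generating curve.

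\textbf{Flatness.} First I would note that in each of the four expressions for $K_{\overset{part-n}{T_{i}}}$ in (\ref{t10}) the numerator is $\pm k_{1}$, while the denominators $r^{2}(\csc f-rk_{1})$, $r^{2}(\sec f-rk_{1})$, $r^{2}(\text{sech}\,f-rk_{1})$ and $r^{2}(\text{csch}\,f-rk_{1})$ are not identically zero. Hence substituting $k_{1}(s)=0$ gives $K_{\overset{part-n}{T_{i}}}=0$ for every $i\in\{1,2,3,4\}$, so by the definition of a flat hypersurface recalled in Section 1 (zero Gaussian curvature) each $\overset{part-n}{T_{i}}$ is flat.

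\textbf{Constant mean curvature.} Next I would substitute $k_{1}(s)=0$ into the formulas for $H_{\overset{part-n}{T_{i}}}$ in (\ref{t10}). In each case the inner fraction $\dfrac{r}{-2+3rk_{1}(\,\cdot\,)}$ collapses to $\dfrac{r}{-2}=-\dfrac{r}{2}$, so that $-r+\dfrac{r}{-2+3rk_{1}(\,\cdot\,)}=-r-\dfrac{r}{2}=-\dfrac{3r}{2}$. Therefore $H_{\overset{part-n}{T_{1}}}=H_{\overset{part-n}{T_{3}}}=\dfrac{1}{-3r/2}=-\dfrac{2}{3r}$ and $H_{\overset{part-n}{T_{2}}}=H_{\overset{part-n}{T_{4}}}=\dfrac{-1}{-3r/2}=\dfrac{2}{3r}$. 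Since $(-1)^{1}=(-1)^{3}=-1$ and $(-1)^{2}=(-1)^{4}=1$, all four cases are summarized by $H_{\overset{part-n}{T_{i}}}=\dfrac{2(-1)^{i}}{3r}$, which is a constant because $r$ is constant for a tubular hypersurface.

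\textbf{Main obstacle.} There is essentially none: the entire content is already packaged in the Theorem producing (\ref{t10}), and what remains is a one-line substitution plus the simplification of a nested fraction. The only points deserving a word of care are that the denominators of the $K_{\overset{part-n}{T_{i}}}$ do not vanish identically (so that ``numerator $=0$'' genuinely forces $K=0$ rather than an indeterminate form), and that the above simplification of $H_{\overset{part-n}{T_{i}}}$ is valid throughout the admissible parameter domain; both are immediate.
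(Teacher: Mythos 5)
Your proposal is correct and is exactly the argument the paper intends: the proposition is stated as an immediate consequence of the theorem giving (\ref{t10}), and setting $k_{1}(s)=0$ there yields $K_{\overset{part-n}{T_{i}}}=0$ and collapses each mean-curvature expression to $\frac{1}{-r-r/2}=-\frac{2}{3r}$ for $i=1,3$ and $\frac{-1}{-3r/2}=\frac{2}{3r}$ for $i=2,4$, i.e.\ $\frac{2(-1)^{i}}{3r}$. Your added remarks on non-vanishing denominators are a small bonus of care not present in the paper.
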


\begin{theorem}
The tubular hypersurfaces $\overset{part-n}{T_{i}},$ $i\in\{1,2,3,4\},$ given
by (\ref{t8}) and (\ref{t9}) in $E_{1}^{4},$ are $\left(
H_{\overset{part-n}{T_{i}}},K_{\overset{part-n}{T_{i}}}\right)  _{st}%
$-Weingarten, $\left(  H_{\overset{part-n}{T_{i}}},K_{\overset{part-n}{T_{i}}%
}\right)  _{sw}$-Weingarten and $\left(  H_{\overset{part-n}{T_{i}}%
},K_{\overset{part-n}{T_{i}}}\right)  _{tw}$-Weingarten.
\end{theorem}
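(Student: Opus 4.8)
The plan is to argue just as in the proof already given for the pseudo null tubular hypersurfaces $\overset{psd-n}{T_{i}}$, the crux being that for each $i\in\{1,2,3,4\}$ the Gaussian curvature $K_{\overset{part-n}{T_{i}}}$ and the mean curvature $H_{\overset{part-n}{T_{i}}}$ in (\ref{t10}) are not independent functions of $(s,t,w)$. Since the radius $r$ is constant here, a short simplification of the four pairs in (\ref{t10}) --- clearing the inner fraction in each $H$ and rewriting $\csc f$, $\sec f$ and their hyperbolic analogues as ordinary fractions --- yields
\[
3H_{\overset{part-n}{T_{i}}}-r^{2}K_{\overset{part-n}{T_{i}}}+\frac{2}{r}=0\ \ (i=1,3),\qquad 3H_{\overset{part-n}{T_{i}}}-r^{2}K_{\overset{part-n}{T_{i}}}-\frac{2}{r}=0\ \ (i=2,4).
\]
These are precisely the relations (\ref{y5y})--(\ref{y6y}) for the canal hypersurfaces specialized to constant radius ($r'=r''=0$), using that $\overset{part-n}{T_{i}}$ comes from $\overset{part-n}{C_{i}}$ for $i=1,2,3$ and $\overset{part-n}{T_{4}}$ from $\overset{part-n}{C_{5}}$; they can also be checked directly by clearing denominators in (\ref{t10}).

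Once such a linear relation with constant coefficients is in hand, differentiating it with respect to each of $s$, $t$, $w$ and using that $r$ is constant gives
\[
\bigl(H_{\overset{part-n}{T_{i}}}\bigr)_{x}=\frac{r^{2}}{3}\bigl(K_{\overset{part-n}{T_{i}}}\bigr)_{x},\qquad x\in\{s,t,w\}.
\]
Substituting these into the three Jacobian determinants defining the $(H,K)_{st}$-, $(H,K)_{sw}$- and $(H,K)_{tw}$-Weingarten conditions (the analogue of (\ref{t4}) for $\overset{part-n}{T_{i}}$), each determinant picks up a common factor $r^{2}/3$ and reduces to a difference of the form $K_{x}K_{y}-K_{y}K_{x}=0$; hence all three conditions hold identically. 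As an alternative that mirrors the earlier proof verbatim, one may instead differentiate $K_{\overset{part-n}{T_{i}}}$ and $H_{\overset{part-n}{T_{i}}}$ in (\ref{t10}) directly and observe that $\bigl(K_{\overset{part-n}{T_{i}}}\bigr)_{x}$ and $\bigl(H_{\overset{part-n}{T_{i}}}\bigr)_{x}$ are proportional, with the same proportionality constant for every $x\in\{s,t,w\}$.

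I expect no genuine obstacle: the argument is essentially the specialization of the canal-hypersurface relations to the constant-radius case. The only care needed is routine bookkeeping of the four trigonometric/hyperbolic variants in (\ref{t10}) together with the index correspondence $\overset{part-n}{T_{4}}\leftrightarrow\overset{part-n}{C_{5}}$, plus noting that the denominators in (\ref{t10}) are nonzero on the domain of the hypersurface so that these differentiations are legitimate.
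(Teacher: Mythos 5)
Your proposal is correct, and its primary route differs from the paper's. The paper proves the theorem (for the representative case $i=3$) by explicitly differentiating $K_{\overset{part-n}{T_{3}}}$ and $H_{\overset{part-n}{T_{3}}}$ from (\ref{t10}) with respect to $s$, $t$, $w$ and observing that the resulting expressions are proportional with the common constant factor $r^{2}/3$, whence the three Jacobian determinants vanish; this is the "alternative that mirrors the earlier proof verbatim" which you mention at the end. Your main argument instead first establishes the affine relation $3H_{\overset{part-n}{T_{i}}}-r^{2}K_{\overset{part-n}{T_{i}}}\pm\frac{2}{r}=0$ with constant coefficients (the sign being $+$ for $i=1,3$ and $-$ for $i=2,4$, consistent with the correspondence $\overset{part-n}{T_{1,2,3}}\leftrightarrow\overset{part-n}{C_{1,2,3}}$ and $\overset{part-n}{T_{4}}\leftrightarrow\overset{part-n}{C_{5}}$), and then differentiates that relation; I checked by clearing denominators in (\ref{t10}) that all four relations do hold, so there is no gap. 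Your route is more conceptual and economical: once the linear relation is known, $\bigl(H\bigr)_{x}=\frac{r^{2}}{3}\bigl(K\bigr)_{x}$ for every parameter $x$ follows at once and all three Weingarten conditions are immediate, with no need to compute six partial derivatives case by case; it also makes transparent why the result holds uniformly in $i$. The paper's computation, on the other hand, is self-contained within the tubular section and does not rely on the canal-hypersurface propositions (which, as stated, concern $\overset{part-n}{C_{i}}$ rather than $\overset{part-n}{T_{i}}$, so your appeal to (\ref{y5y})--(\ref{y6y}) properly requires the direct verification from (\ref{t10}) that you also supply).
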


\begin{proof}
Here we will give the proof for $i=3.$ From (\ref{t10}), we have%
\begin{equation}%
\begin{array}
[c]{l}%
\left(  H_{\overset{part-n}{T_{3}}}\right)  _{s}=\frac{k_{1}^{\prime}\cosh
f}{3\left(  -1+rk_{1}\cosh f\right)  ^{2}}\\
\left(  H_{\overset{part-n}{T_{3}}}\right)  _{t}=\frac{k_{1}f_{t}\sinh
f}{3\left(  -1+rk_{1}\cosh f\right)  ^{2}}\\
\left(  H_{\overset{part-n}{T_{3}}}\right)  _{w}=\frac{k_{1}f_{w}\sinh
f}{3\left(  -1+rk_{1}\cosh f\right)  ^{2}}%
\end{array}
\text{ \ \ \ and \ \ \ }\left.
\begin{array}
[c]{l}%
\left(  K_{\overset{part-n}{T_{3}}}\right)  _{s}=\frac{k_{1}^{\prime}\cosh
f}{r^{2}\left(  -1+rk_{1}\cosh f\right)  ^{2}}\\
\left(  K_{\overset{part-n}{T_{3}}}\right)  _{t}=\frac{k_{1}f_{t}\sinh
f}{r^{2}\left(  -1+rk_{1}\cosh f\right)  ^{2}}\\
\left(  K_{\overset{part-n}{T_{3}}}\right)  _{w}=\frac{k_{1}f_{w}\sinh
f}{r^{2}\left(  -1+rk_{1}\cosh f\right)  ^{2}}.
\end{array}
\right\}  \label{t11}%
\end{equation}

So, from (\ref{t11}) we get%
\begin{equation}
\left.
\begin{array}
[c]{l}%
\left(  H_{\overset{part-n}{T_{3}}}\right)  _{s}\left(
K_{\overset{part-n}{T_{3}}}\right)  _{t}-\left(  H_{\overset{part-n}{T_{3}}%
}\right)  _{t}\left(  K_{\overset{part-n}{T_{3}}}\right)  _{s}=0,\\
\left(  H_{\overset{part-n}{T_{3}}}\right)  _{s}\left(
K_{\overset{part-n}{T_{3}}}\right)  _{w}-\left(  H_{\overset{part-n}{T_{3}}%
}\right)  _{w}\left(  K_{\overset{part-n}{T_{3}}}\right)  _{s}=0,\\
\left(  H_{\overset{part-n}{T_{3}}}\right)  _{t}\left(
K_{\overset{part-n}{T_{3}}}\right)  _{w}-\left(  H_{\overset{part-n}{T_{3}}%
}\right)  _{w}\left(  K_{\overset{part-n}{T_{3}}}\right)  _{t}=0
\end{array}
\right\}  \label{y7y}%
\end{equation}
and this completes the proof. Similarly, one can obtain the results for
$i=1,2,4$.
\end{proof}

\begin{theorem}
\textit{The tubular hypersurfaces that are formed as the envelope of a family
of pseudo hyperspheres whose centers lie on a null curve }$\gamma(s)$ with
Frenet vector fields $F_{i},$ $i\in\{1,2,3,4\},$\textit{ in }$E_{1}^{4}$
\textit{can be parametrized by}%
\begin{equation}
\overset{null}{T_{1}}(s,t,w)=\gamma(s)+a_{1}(s,t,w)F_{1}(s)+a_{2}%
(s,t,w)F_{2}(s)+a_{4}(s,t,w)F_{4}(s), \label{t13}%
\end{equation}
where
\[
a_{2}(s,t,w)^{2}+a_{4}(s,t,w)^{2}=r^{2}.
\]
\textit{There is no tubular hypersurfaces that are formed as the envelope of a
family of pseudo hyperbolic hyperspheres whose centers lie on a null curve
}$\gamma(s)$ with Frenet vector fields $F_{i},$ $i\in\{1,2,3,4\},$\textit{ in
}$E_{1}^{4}.$
\end{theorem}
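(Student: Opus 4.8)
The plan is to obtain this statement as the constant-radius degeneration of the preceding theorem on canal hypersurfaces generated by null curves. I would begin exactly as there: write the envelope in the Frenet frame of the null curve $\gamma$,
\[
\overset{null}{T}(s,t,w)-\gamma(s)=a_{1}F_{1}+a_{2}F_{2}+a_{3}F_{3}+a_{4}F_{4},
\]
and impose that $\overset{null}{T}(s,t,w)$ lie on a pseudo hypersphere ($\lambda=1$) or a pseudo hyperbolic hypersphere ($\lambda=-1$) of radius $r$ centered at $\gamma(s)$. Using the inner-product relations (\ref{nullFi ler}) this gives $a_{2}^{2}+a_{4}^{2}+2a_{1}a_{3}=\lambda r^{2}$, i.e. the analogue of (\ref{3'}), with $r$ now constant.

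Next I would differentiate with respect to $s$ via the null Frenet formulas (\ref{nullFrenet}) and impose the envelope condition $g(\overset{null}{T}-\gamma,(\overset{null}{T})_{s})=0$. Substituting the $s$-derivative of the sphere relation into this orthogonality identity, just as in the step leading to (\ref{7'}), yields $a_{3}=-\lambda r\,r_{s}$; since the radius is constant, $r_{s}=0$ and hence $a_{3}=0$. Putting $a_{3}=0$ back into $a_{2}^{2}+a_{4}^{2}+2a_{1}a_{3}=\lambda r^{2}$ gives $a_{2}^{2}+a_{4}^{2}=\lambda r^{2}$. For $\lambda=1$ this is the stated constraint $a_{2}^{2}+a_{4}^{2}=r^{2}$, and the envelope reduces to the parametrization (\ref{t13}).

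For the non-existence assertion I would simply note that the choice $\lambda=-1$ forces $a_{2}^{2}+a_{4}^{2}=-r^{2}<0$, which is impossible for real differentiable functions $a_{2},a_{4}$; hence there is no tubular hypersurface arising as the envelope of pseudo hyperbolic hyperspheres whose centers lie on a null curve. There is no genuine obstacle in this argument — it is a one-parameter specialization of a theorem already proved — but the one point worth flagging is that, for a null curve, it is the $F_{3}$-coefficient (not the $F_{1}$-coefficient, as in the pseudo null and partially null cases) that collapses when $r_{s}=0$, because $F_{3}$ is the frame vector paired with $F_{1}$ in the metric (\ref{nullFi ler}); this is why (\ref{t13}) has a structurally different shape from (\ref{t1}) and (\ref{t8}), and it is exactly this feature that makes the $\lambda=-1$ case fail here even though its analogues for pseudo null and partially null curves (the hypersurfaces $\overset{psd-n}{C_{5}}$ and $\overset{part-n}{C_{5}}$) do not.
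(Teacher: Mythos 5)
Your proposal is correct and follows exactly the route the paper intends: the paper states this theorem without an explicit proof, leaving it as the constant-radius specialization of the preceding theorem on null canal hypersurfaces, where $a_{3}=-\lambda r r_{s}$ becomes $a_{3}=0$ and the sphere condition collapses to $a_{2}^{2}+a_{4}^{2}=\lambda r^{2}$, which is impossible for $\lambda=-1$. Your closing observation — that the non-existence in the hyperbolic case is forced precisely because the residual quadratic form $a_{2}^{2}+a_{4}^{2}$ is positive semidefinite for a null frame, unlike the indefinite forms $a_{3}^{2}+2a_{2}a_{4}$ and $a_{2}^{2}+2a_{3}a_{4}$ in the pseudo null and partially null cases — is accurate and is a useful clarification the paper does not spell out.
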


\bigskip

\bigskip

\end{document}